\numberwithin{equation}{section}
\subjclass[2010]{14E30, 14J17, 14J30, 14J40}
\newcommand{\KK}{\mathbb{K}}
\newcommand{\Qq}{\mathbb{Q}}
\newcommand{\Rr}{\mathbb{R}}
\newcommand{\Zz}{\mathbb{Z}}
\newcommand{\fD}{\mathfrak{D}}
\newcommand{\Center}{\operatorname{center}}
\newcommand{\Exc}{\operatorname{Exc}}
\newcommand{\mld}{{\rm{mld}}}
\newcommand{\lct}{\operatorname{lct}}
\newcommand{\Supp}{\operatorname{Supp}}
\newcommand{\rank}{\operatorname{rank}}
\newcommand{\mult}{\operatorname{mult}}
\newcommand{\Oo}{\mathcal{O}}
\newcommand{\mf}{\mathcal{MF}}
\newcommand{\nmul}{\mathsf{nm}}
\newcommand{\ndist}{\mathsf{nd}}
\newcommand{\NP}{\mathcal{NP}}
\newcommand\bmu{{\bm \mu}}
\newtheorem{thm}{Theorem}[section]
\newtheorem{conj}[thm]{Conjecture}
\newtheorem{cor}[thm]{Corollary}
\newtheorem{lem}[thm]{Lemma}
\newtheorem{prop}[thm]{Proposition}
\newtheorem{claim}[thm]{Claim}
\newtheorem*{notation}{Notation ($\star$)}
\theoremstyle{definition}
\newtheorem{rem}[thm]{Remark}
\newtheorem{ex}[thm]{Example}
\newtheorem{defn}[thm]{Definition}
\newtheorem{question}[thm]{Question}
\newcommand\luo[1]{\todo[color=green!40]{#1}}
\newcommand\JC[1]{\todo[color=blue!40]{#1}}
\title{Shokurov's conjecture on conic bundles with canonical singularities}
\author{Jingjun Han}
\address{Jingjun Han, Department of Mathematics, Johns Hopkins University, Baltimore, MD 21218, USA}
\email{jhan@math.jhu.edu}
\author{Chen Jiang}
\address{Chen Jiang, Shanghai Center for Mathematical Sciences, Fudan University, Jiangwan Campus, Shanghai, 200438, China}
\email{chenjiang@fudan.edu.cn}
\author{Yujie Luo}
\address{Yujie Luo, Department of Mathematics, Johns Hopkins University, Baltimore, MD 21218, USA}
\email{yluo32@jhu.edu}
\begin{document}
\maketitle

\begin{abstract}
A conic bundle is a contraction $X\to Z$ between normal varieties of relative dimension $1$ such that $-K_X$ is relatively ample. We prove a conjecture of Shokurov which predicts that, if $X\to Z$ is a conic bundle such that $X$ has canonical singularities and $Z$ is $\Qq$-Gorenstein, then $Z$ is always $\frac{1}{2}$-lc, and the multiplicities of the fibers over codimension $1$ points are bounded from above by $2$. Both values $\frac{1}{2}$ and $2$ are sharp. This is achieved by solving a more general conjecture of Shokurov on singularities of bases of lc-trivial fibrations of relative dimension $1$ with canonical singularities.
\end{abstract}

\tableofcontents

\section{Introduction}

We work over the field of complex numbers $\mathbb{C}$.

A {\it $\Qq$-conic bundle} is a proper morphism $X\to Z$ from a $3$-fold with only terminal singularities to a normal surface such that all fibers are connected and 1-dimensional, and $-K_X$ is relatively ample over $Z$.
A conjecture of Iskovskikh predicts that the base surface $Z$ has only canonical singularities, or equivalently $Z$ is $1$-lc. This conjecture has important applications to the rationality problem of conic bundles \cite{Isk96}. Mori and Prokhorov proved Iskovskikh's conjecture by showing that $Z$ has only Du Val singularities of type $A$ and giving a complete local classification of $\Qq$-conic bundles over a singular base in \cite{MP08, MP08b}.

Motivated by Iskovskikh's conjecture, it is natural to study the singularities of the base surface $Z$ when $X$ has worse singularities, for example, canonical singularities. Such kind of contraction also appears naturally in the birational classification of $3$-dimensional algebraic varieties. Indeed when $\rho(X/Z)=1$, it is one of three possible outcomes of the minimal model program for canonical $3$-folds of negative Kodaira dimension. However, $Z$ may no longer be $1$-lc for such contractions. Shokurov conjectured that $Z$ is always $\frac{1}{2}$-lc, and the value $\frac{1}{2}$ is optimal (see Remark~\ref{remark 1}). More generally, Shokurov's conjecture is expected to hold for conic bundles with canonical singularities in all dimensions. 

\begin{conj}[{Shokurov, cf. \cite{Sho14, Pro18}}]\label{con: Shokurov P1 fibration lct 1/2}
Let $\pi:X\to Z$ be a contraction between normal varieties, such that
\begin{enumerate}
 \item $\dim X-\dim Z=1$, 
 \item $X$ is canonical,
 \item $K_Z$ is $\mathbb{Q}$-Cartier, and
 \item\label{con: Shokurov P1 assum 4} $-K_X$ is ample over $Z$.
\end{enumerate}
Then $Z$ is $\frac{1}{2}$-lc.
\end{conj}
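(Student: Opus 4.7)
The plan is to reduce this to the more general statement on lc-trivial fibrations of relative dimension one alluded to in the abstract, and then to settle that statement via the canonical bundle formula combined with a local analysis of the fibers.

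\textbf{Reduction to an lc-trivial fibration.} Since $-K_X$ is $\pi$-ample, pick a sufficiently general effective $\Qq$-divisor $B \sim_{\Qq,\pi} -K_X$. By a Bertini-type argument applied to a $\pi$-free multiple of $-K_X$, the pair $(X,B)$ is klt, $X$ remains canonical, and $K_X + B \sim_{\Qq,\pi} 0$, so $\pi\colon (X,B)\to Z$ is a klt-trivial fibration of relative dimension one whose source has canonical singularities. Ambro's canonical bundle formula then gives
\[
K_X + B \sim_{\Qq} \pi^{*}(K_Z + B_Z + M_Z),
\]
with $B_Z \geq 0$ the discriminant divisor and $\mathbf{M}$ a moduli $b$-divisor; by Prokhorov--Shokurov, since the relative dimension is one, $\mathbf{M}$ is $b$-semiample and $(Z, B_Z + M_Z)$ is a generalized lc pair. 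As $B_Z + M_Z$ is effective, it suffices to establish that $(Z, B_Z + M_Z)$ is generalized $\tfrac{1}{2}$-lc, which then transfers to $Z$ being $\tfrac{1}{2}$-lc.

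\textbf{Codimension-one control.} For a prime divisor $D \subset Z$, the coefficient of $B_Z$ along $D$ equals $1 - \lct_{\eta_D}(X,B;\pi^{*}D)$; by canonicity of $(X,B)$ this reduces to $1 - 1/m_D$, where $m_D$ denotes the multiplicity of the scheme-theoretic fiber $\pi^{-1}(D)$ at its generic point. The key codim-one lemma is $m_D \leq 2$, forcing $\operatorname{coeff}_D B_Z \leq \tfrac{1}{2}$. This is established by localizing $Z$ at $D$, running a relative MMP using $\pi$-ampleness of $-K_X$ and canonicity of $X$, and classifying the possible degenerate fibers---essentially a relative-dimension-one analogue of Mori--Prokhorov \cite{MP08, MP08b} at codimension one. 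The sharp values $m_D = 2$ and $\operatorname{coeff}_D B_Z = \tfrac{1}{2}$ match the optimality of $\tfrac{1}{2}$ in the statement.

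\textbf{Higher codimension and main obstacle.} For an exceptional divisor $E$ over $Z$ with center of higher codimension, I lift $E$ to a divisorial valuation $E'$ on a log resolution of $(X,B)\to Z$ that dominates $E$, and transfer discrepancies through the canonical bundle formula: canonicity of $(X,B)$ gives the log discrepancy bound $a_\ell(E', X, B) \geq 1$, and chasing this through the formula together with the codim-one bound produces a generalized log discrepancy $\geq \tfrac{1}{2}$ for $E$ over $(Z, B_Z + M_Z)$. The delicate step, and the main obstacle, is handling the moduli contribution $\operatorname{mult}_E \mathbf{M}$, since the sharp constant $\tfrac{1}{2}$ leaves no room: any positive multiplicity of the moduli $b$-divisor along $E$ must be absorbed by an improvement on the $X$-side. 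I expect to control this by induction on $\dim Z$, applying adjunction for generalized pairs along a general hyperplane section of $Z$ (in the style of Birkar--Hu or Filipazzi) to descend to $\dim Z = 1$, where $\mathbf{M}$ honestly descends to a semiample divisor on $Z$ itself and the analysis of the previous paragraph concludes the argument.
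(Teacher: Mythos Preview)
Your high-level architecture matches the paper's: construct a general $B\sim_{\Qq,Z}-K_X$ with $(X,B)$ canonical and $K_X+B\sim_{\Qq,Z}0$, apply the canonical bundle formula, and use b-semi-ampleness of the moduli part in relative dimension one (Prokhorov--Shokurov) to conclude that $(Z,B_Z+M_Z)$ is $\tfrac12$-lc for a suitable $M_Z\ge 0$. The essential divergence, and a genuine gap, is in your codimension-one control.

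The assertion that canonicity of $(X,B)$ forces $\lct_{\eta_D}(X,B;\pi^*D)=1/m_D$ is false. Even when the fiber over $D$ is reduced ($m_D=1$), the horizontal boundary $B$ can be tangent to it and drive the threshold down. Concretely, on $X=\mathbb{P}^1\times\mathbb{A}^1$ with fiber $F=(x=0)$ and $B$ a smooth bisection with local equation $x=y^2$, the pair $(X,B)$ is canonical, $B\cdot F=2$, $m_D=1$, yet $\lct(X,B;F)=\tfrac12$ and the coefficient of $D$ in $B_Z$ equals $\tfrac12$. So the bound $m_D\le 2$ neither follows from canonicity by itself nor implies $\lct\ge\tfrac12$; the implication actually runs the other way (the paper \emph{deduces} $m_D\le 2$ from the lct bound). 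The paper's route to $\lct\ge\tfrac12$ is to cut by hyperplanes on $Z$ down to $\dim X=2$, run an MMP to reach a $\mathbb{P}^1$-bundle, and then prove a sharp local inequality on a smooth surface germ: if $\mult_P B\le 1$ and $C$ is a smooth curve with $(B\cdot C)_P\le 2$, then $\lct(X\ni P,B;C)\ge\tfrac12$. This is the technical heart and is handled by a Puiseux/Newton-polytope analysis together with convexity of lct's; no Mori--Prokhorov-style classification enters.

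For higher-codimension centers the paper does not induct on $\dim Z$ via generalized adjunction. Instead it reapplies the codimension-one lct bound on \emph{every} birational model $Z'\to Z$: for an exceptional prime $P_0$ over $Z$ with center $\overline{z}$, the base-changed sub-pair $(X'/Z'\ni\eta_{P_0},B')$ still has $\mld\ge 1$, hence $\lct\ge\tfrac12$, so the coefficient of $P_0$ in the discriminant b-divisor $\mathbf{B}$ is at most $\tfrac12$. Combined with b-semi-ampleness one then chooses $M_Z\ge 0$ making $(Z,B_Z+M_Z)$ $\tfrac12$-lc. Your inductive scheme might be salvageable once the correct codimension-one input is in hand, but as written there is no mechanism producing the bound $\lct\ge\tfrac12$.
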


\begin{rem}\label{remark 1}
\begin{enumerate}
 \item In Conjecture~\ref{con: Shokurov P1 fibration lct 1/2}, assumption~\eqref{con: Shokurov P1 assum 4} can be replaced by ``$-K_X$ is nef and big over $Z$", which can be reduced to Conjecture~\ref{con: Shokurov P1 fibration lct 1/2} by taking the anti-canonical model over $Z$.
\item In a private communication, Prokhorov shared his expectation that $Z$ should be $\frac{1}{2}$-klt in Conjecture~\ref{con: Shokurov P1 fibration lct 1/2} motivated by \cite[Example~10.6.1]{Pro18}. However this is not always the case if $\dim X\geq 3$, see Example~\ref{ex: counterex of 1/2-klt}. 
\end{enumerate}
\end{rem}

\begin{ex}[{cf. \cite[Example~10.6.1]{Pro18}}]\label{ex: counterex of 1/2-klt}
Consider the following action of $\bmu_{4m}$ on $\mathbb{P}_x^1\times \mathbb{C}^2_{u, v}$:
$$
(x; u, v)\mapsto (-x; \xi u, \xi^{2m-1} v),
$$
where $m$ is a positive integer and $\xi$ is a primitive $4m$-th root of unity. Let $X=(\mathbb{P}^1\times \mathbb{C}^2)/\bmu_{4m}$, $Z= \mathbb{C}^2/\bmu_{4m}$, and $\pi: X\to Z$ the natural projection. Since $\bmu_{4m}$ acts freely in codimension $1$, $-K_X$ is $\pi$-ample. Note that $Z$ has an isolated cyclic quotient singularity of type $\frac{1}{4m}(1, 2m-1)$ at the origin $o\in Z$, and $\mld(Z\ni o)=\frac{1}{2}$ (see \cite{Amb06} for the computation of minimal log discrepancies of toric varieties). On the other hand, $X$ is covered by $2$ open affine charts $(x\neq 0)$ and $(x\neq \infty)$, and each chart is isomorphic to the affine toric variety $\mathbb{C}^3/\frac{1}{4m}(2m, 1, 2m-1)$, which is canonical (see \cite[(4.11)~Theorem]{YPG}) and Gorenstein. Note that in this case, $\rho(X/Z)=1$ and the singular locus of $X$ is the whole fiber $\pi^{-1}(o)$ which is $1$-dimensional. It is not clear yet whether there are such examples where $X$ has isolated canonical singularities.
\end{ex}

The main purpose of this paper is to give an affirmative answer to Shokurov's conjecture.
\begin{thm}\label{thm: Shokurov P1 fibration lct 1/2}
Conjecture~\ref{con: Shokurov P1 fibration lct 1/2} holds.
\end{thm}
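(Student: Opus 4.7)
\medskip

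\noindent\textbf{Proof proposal.}

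\smallskip

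The natural strategy is to reduce the problem to a statement about lc-trivial fibrations and then use a canonical bundle formula. First I would pick a sufficiently general $\Qq$-complement $B$ on $X$: since $-K_X$ is $\pi$-ample, there exists an effective $\Qq$-divisor $B$ on $X$ with $K_X+B\sim_{\Qq,Z}0$, which we can assume cuts each general fibre transversally in distinct points and keeps $(X,B)$ klt with $X$ canonical and $\lfloor B\rfloor=0$. This turns $\pi$ into an lc-trivial fibration of relative dimension one with canonical total space, which is exactly the more general setting mentioned in the abstract.

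\smallskip

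Next I would apply the canonical bundle formula of Kawamata--Ambro to write
\[
K_X+B\sim_{\Qq}\pi^{*}\bigl(K_Z+B_Z+M_Z\bigr),
\]
where $B_Z$ is the discriminant part and $M_Z$ the moduli part. The plan is to bound each piece separately. For $B_Z$, the coefficient at a prime divisor $P\subset Z$ is $1-\lct_{\eta_P}(X/Z,B)$, which at a codimension one point reduces to $1-1/m_P$, where $m_P$ is the multiplicity of the fibre $\pi^{*}P$. The central local claim to establish is that, because $X$ is canonical and the fibre is isomorphic to $\Pp^1$ generically, one must have $m_P\le 2$; equivalently, the coefficients of $B_Z$ are at most $\tfrac12$. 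This is the content of the companion statement on multiplicities announced in the abstract and should be proved by a direct local analysis of a canonical singularity sitting over a multiple fibre (e.g.\ via an $m$-th root cover trick and inspecting the induced singularity on the cover). For $M_Z$ I would use that the moduli b-divisor is b-nef and, after a suitable base change, b-semiample, so in particular $M_Z$ can be represented by an effective nef $\Qq$-divisor with arbitrarily small coefficients after perturbation.

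\smallskip

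With these bounds in hand, I would conclude as follows. Because $K_Z$ is assumed $\Qq$-Cartier, so is $B_Z+M_Z$. Apply inversion-of-adjunction / the canonicity preservation of the canonical bundle formula to deduce that $(Z, B_Z+M_Z)$ is lc. Then write the discrepancy identity at an arbitrary exceptional divisor $E$ over $Z\ni z$:
\[
a(E,Z)=a(E,Z,B_Z+M_Z)+\mult_E(B_Z+M_Z)\ge 0+\mult_E(B_Z+M_Z).
\]
Since the coefficients of $B_Z$ are at most $\tfrac12$ and $M_Z$ can be chosen effective with arbitrarily small coefficients on any fixed model, $\mult_E(B_Z+M_Z)\le\tfrac12$ for every such $E$, giving $a(E,Z)\ge\tfrac12$ and hence $Z$ is $\tfrac12$-lc.

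\smallskip

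The main obstacle I anticipate is the step bounding fibre multiplicities by $2$ and, relatedly, controlling the moduli part sharply enough not to destroy the $\tfrac12$ bound. Fibre multiplicity $\le 2$ is genuinely a theorem about canonical $3$-dimensional (and higher) singularities fibred in $\Pp^1$: the generalisation from the terminal (Mori--Prokhorov) case to the canonical case is what requires real work, and it is where the sharp constant $\tfrac12$ is lost if one is not careful. Controlling $M_Z$ locally in higher codimension, so that choosing a different representative of the moduli b-divisor does not introduce discrepancy loss exceeding $\tfrac12$, is the technical heart; this should be handled by combining b-semiampleness of the moduli part for relative dimension one fibrations (which is known) with a careful choice of the complement $B$ so that $M_Z$ is as close to zero as one pleases on a fixed log resolution of $Z$.
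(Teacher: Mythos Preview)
Your overall architecture---choose a general $\Qq$-complement $B$ so that $(X,B)$ is canonical with $K_X+B\sim_{\Qq,Z}0$, then apply the canonical bundle formula and control $B_Z$ and $M_Z$---is exactly the paper's strategy. The reduction to Corollary~\ref{cor: High dim SM}/Theorem~\ref{High dim SM} is correct.

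However, there is a genuine gap in your final discrepancy computation. You write
\[
a(E,Z)=a(E,Z,B_Z+M_Z)+\mult_E(B_Z+M_Z)
\]
and then claim $\mult_E(B_Z+M_Z)\le\tfrac12$ because the coefficients of $B_Z$ on $Z$ are at most $\tfrac12$. But for an \emph{exceptional} divisor $E$ over $Z$, the quantity on the right is the order of vanishing of the \emph{pullback} $f^*(B_Z+M_Z)$ along $E$, and this can be arbitrarily large even when the coefficients of $B_Z+M_Z$ on $Z$ itself are small. So bounding the discriminant only on $Z$ (equivalently, bounding fibre multiplicities over codimension-one points of $Z$) gives you nothing about $\mld(Z\ni z)$ at a higher-codimension point $z$. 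Relatedly, your formula $1-\lct=1-1/m_P$ is an oversimplification: the lct involves the interaction of $B$ with the fibre, not just the fibre multiplicity.

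What the paper actually does is bound the discriminant \emph{b-divisor}: for every birational model $g:Z'\to Z$ and every prime divisor $P_0$ on $Z'$ with $g(P_0)=\overline{z}$, one base-changes the fibration to $(X',B')\to Z'$ and applies the lct bound (Theorem~\ref{High dim}) to the \emph{sub}-pair $(X',B')$ over the codimension-one point $\eta_{P_0}\in Z'$. This gives $\mult_{P_0}\mathbf{B}_{Z'}\le\tfrac12$ on every model (Claim~\ref{bound coefficient of birational divisor}). Then b-semiampleness of $\mathbf{M}$ lets one descend to a representative $M_Z\ge0$ with $(Z,B_Z+M_Z)$ genuinely $\tfrac12$-lc, and since $B_Z,M_Z\ge0$ this forces $Z$ to be $\tfrac12$-lc. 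The hard technical input is therefore Theorem~\ref{High dim} (the lct bound), which the paper proves by cutting down to $\dim X=2$, running an MMP to a $\Pp^1$-bundle, and establishing a sharp local estimate on smooth surface germs (Theorem~\ref{Smooth local case general beta version}); your suggested $m$-th root cover is not the route taken, and a direct ``canonical singularity over a multiple fibre'' analysis would not obviously yield the needed bound on all birational base changes.
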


Theorem~\ref{thm: Shokurov P1 fibration lct 1/2} follows from a more general result, see Theorem~\ref{High dim SM}. In order to state the result, we recall some backgrounds. Let $\pi:(X,B)\to Z$ be an lc-trivial fibration (see Definition~\ref{def lctrivial}, for example,
$\pi: X\to Z$ is a contraction between normal varieties and
$(X, B)$ is an lc pair with $K_X+B\sim_{\Rr,Z}0$). 
By the work of Kawamata \cite{Kaw97,Kaw98} and Ambro \cite{Amb05}, we have the so-called {\it canonical bundle formula}
$$K_X+B\sim_{\Rr}\pi^{*}(K_Z+B_Z+M_Z),$$
where $B_Z$ is the \emph{discriminant part} and $M_Z$ is the \emph{moduli part}, see Section~\ref{sec cbf} for more details. For the inductive purpose, it is useful and important to study the relation between singularities of $(X,B)$ and those of $(Z,B_Z+M_Z)$. In this context, Shokurov proposed the following conjecture. Recall that $\mld(X/Z\ni z,B)$ is the infimum of all the log discrepancies of prime divisors over $X$ whose image on $Z$ is $\overline{z}$ (see Definition~\ref{defn: relative mld}).

\begin{conj}[Shokurov, cf. {\cite[Conjecture~1.2]{AB14}}]\label{conj:ShoMc}
	Let $d$ be a positive integer and $\epsilon$ a positive real number. Then there is a positive real number $\delta=\delta(d,\epsilon)$ depending only on $d,\epsilon$ satisfying the following. Let $\pi: (X,B)\to Z$ be an lc-trivial fibration and $z\in Z$ a point of codimension $\ge 1$, such that 
	\begin{enumerate}
	\item $\dim X-\dim Z=d$,
	 \item\label{conj:ShoMc assum 2} $\mld(X/Z\ni z,B)\ge \epsilon$, and 
	 \item the generic fiber of $\pi$ is of Fano type.
	\end{enumerate}
Then we can choose $M_Z\geq 0$ representing the moduli part, so that $(Z\ni z,B_Z+M_Z)$ is $\delta$-lc. 
\end{conj}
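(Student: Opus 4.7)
The plan is to reduce to a local analysis around $z\in Z$ via the canonical bundle formula, treating the cases of $z$ of codimension $1$ and of $z$ of codimension $\ge 2$ separately; I focus on $d=1$, which is the setting of Theorem~\ref{thm: Shokurov P1 fibration lct 1/2}. For a codimension $1$ point $z=\eta_P$, the coefficient of $P$ in $B_Z$ equals $1-c_P$, where $c_P=\min_i(1-b_i)/m_i$ with $b_i$ the coefficient of $B$ along the $i$-th component $F_i$ of $\pi^*P$ and $m_i$ its multiplicity. Establishing the bound $(B_Z)_P\le 1-\delta$ thus amounts to bounding the $m_i$ in terms of $\epsilon$, which I would achieve by running a relative MMP on the generic fibre over the DVR at $\eta_P$ (available since that fibre is of Fano type), tracking the $\mld$ hypothesis through the extractions and contractions via standard discrepancy comparisons.

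For a point $z$ of codimension $\ge 2$ I would invoke Ambro's structure theorem for the moduli part together with Birkar's boundedness of complements to choose an effective $M_Z\ge 0$ whose index is bounded in terms of $d$ and a DCC set containing the coefficients of $B$. Then for any divisorial valuation $E$ centred at $z$, after a common log resolution one compares the log discrepancy of $E$ with respect to $(Z,B_Z+M_Z)$ to the log discrepancies of suitable divisors $F$ over $X$ via the canonical bundle formula; the comparison is governed by the multiplicity of $F$ in the fibre over the centre of $E$, which for $d=1$ is controlled by the combinatorics of a tree of rational curves on a two-dimensional model, the generic fibre being $\bP^1$. A weighted combination of the inequalities so obtained, indexed by the components of this tree, should convert the $\mld$ assumption on $(X,B)$ into the desired lower bound on the log discrepancy of $E$.

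The principal obstacle I expect is controlling the moduli part at codimension $\ge 2$ centres: even with a bounded-index choice of $M_Z$, it is not automatic that the coefficients of $M_Z$ near $z$ are small, and ruling out concentration of $M_Z$ over $z$ requires either effective $b$-semiampleness statements or a Birkar-type boundedness of the generic fibre together with its induced boundary. A secondary difficulty is uniformity of $\delta$ in the coefficients and index of $B$: to obtain $\delta=\delta(d,\epsilon)$ independent of $(X,B)$, the argument must combine ACC of log canonical thresholds with a compactness or specialisation argument applied to a sequence of putative counterexamples, rather than any constructive pointwise bound.
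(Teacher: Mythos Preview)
First, note that the statement is a \emph{conjecture} in the paper; the paper does not prove it in full. What the paper proves is the case $d=1$ with $\epsilon\ge 1$ (Theorem~\ref{High dim SM}), obtaining the optimal value $\delta(1,\epsilon)=\epsilon-\tfrac{1}{2}$. Your proposal also restricts to $d=1$, so I compare against the paper's proof of Theorem~\ref{High dim SM}.

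Your approach diverges from the paper's in a structural way, and in doing so it creates the very difficulty you flag at the end. You split into codimension $1$ versus codimension $\ge 2$ centres and, for the latter, attempt a direct comparison of log discrepancies through the fibre tree together with a bounded-index choice of $M_Z$. The paper instead avoids treating codimension $\ge 2$ separately: it passes to the b-divisor $\mathbf{B}$ and, for \emph{any} prime divisor $P_0$ over $Z$ with centre $\overline{z}$, takes a resolution $Z'\to Z$ on which $P_0$ is a genuine divisor. On $Z'$ the coefficient of $P_0$ in $\mathbf{B}_{Z'}$ is $1-\lct(X'/Z'\ni\eta_{P_0},B';\pi'^*P_0)$ for the induced lc-trivial fibration, so the whole problem collapses to a codimension $1$ lct bound (Theorem~\ref{High dim}), proved by cutting down to a surface and a careful local analysis (Theorem~\ref{Smooth local case general beta version}). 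The moduli part is then handled for free because for $d=1$ the b-divisor $\mathbf{M}$ is b-semi-ample by \cite[Theorem~8.1]{PS09}; one descends a general member from a model where $\mathbf{M}$ stabilises. This is precisely the ``effective b-semiampleness'' input you list as an obstacle---for $d=1$ it is available, and the paper uses it.

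There are also two concrete gaps in your codimension $1$ step. The formula $c_P=\min_i(1-b_i)/m_i$ for the lct is only valid after passing to a log resolution where the fibre is simple normal crossing; on $X$ itself the lct involves all divisors over $X$, not just the components of $\pi^*P$. And ``bounding the $m_i$'' is not equivalent to bounding the lct: even with multiplicities bounded, the lct can be small if the boundary is tangent to the fibre. The paper's Theorem~\ref{High dim} addresses exactly this via the surface germ estimate $\lct(X\ni P,B;C)\ge 1+\tfrac{m}{I}-m$, which is where the real work lies and which your outline does not supply.
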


\begin{rem}
\begin{enumerate}
 \item The formulation of Conjecture~\ref{conj:ShoMc} here is stronger than that in the previous literature \cite{AB14, Bir16crelle}, where a stronger assumption (2') that ``$(X, B)$ is an $\epsilon$-lc pair" is required instead of assumption~\eqref{conj:ShoMc assum 2}, and $\delta$ depends on $\dim X$ and $\epsilon$ instead of just $\dim X-\dim Z$ and $\epsilon$.
 In our formulation, $B$ can be non-effective and $(X, B)$ can have non-klt centers over $Z\setminus z$. 
 
\item Birkar \cite{Bir16crelle} proved Conjecture~\ref{conj:ShoMc} under assumption (2') for one of the following cases: (a) $(F,B|_F)$ belongs to a bounded family, or (b) $\dim X=\dim Z+1$. Hence Conjecture~\ref{conj:ShoMc} under assumption (2') holds when the coefficients of $B|_F$ are bounded from below away from zero as a consequence of the Borisov--Alexeev--Borisov conjecture proved by Birkar \cite{Bir19,Bir21}. Very recently, Birkar and Y. Chen \cite{BC21} proved Conjecture~\ref{conj:ShoMc} under assumption (2') for toric morphisms between toric varieties. We refer the readers to \cite[Theorems~1.9 and 2.5]{Bir18} for more related results.

\item Following ideas in \cite{Bir16crelle}, it is indicated by G. Chen and the first author \cite[Proposition~7.6]{CH21} that Conjecture~\ref{conj:ShoMc} might be a consequence of Shokurov's $\epsilon$-lc complements conjecture. Moreover, following the proof of \cite[Corollary~1.7]{Bir16crelle}, \cite[Theorem~1.3]{CH21} implies that Conjecture~\ref{conj:ShoMc} holds for $\dim X=\dim Z+1$.

\item It is worthwhile to mention that Conjecture~\ref{conj:ShoMc} implies M\textsuperscript{c}Kernan's conjecture on Mori fiber spaces \cite[Conjecture~1.1]{AB14}, which is closely related to Iskovskikh's conjecture. Alexeev and Borisov \cite{AB14} proved M\textsuperscript{c}Kernan's conjecture for toric morphisms between toric varieties. 
 
\end{enumerate}
\end{rem}

Our second main result gives the optimal value of $\delta(1,\epsilon)=\epsilon-\frac{1}{2}$ for any $\epsilon\ge 1$.

\begin{thm}\label{High dim SM}
Let $\pi: (X,B)\to Z$ be an lc-trivial fibration and $z\in Z$ a codimension $\ge 1$ point, such that

\begin{enumerate}
 \item $\dim X-\dim Z=1$,
 \item $\mld(X/Z\ni z,B)\ge 1$, and 
 \item the generic fiber of $\pi$ is a rational curve.
\end{enumerate}
Then we can choose $M_Z\geq 0$ representing the moduli part, so that 
$$\mld(Z\ni z,B_Z+M_Z)\geq \mld(X/Z\ni z,B)-\frac{1}{2}\geq \frac{1}{2}.$$
\end{thm}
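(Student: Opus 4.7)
Set $\epsilon := \mld(X/Z \ni z, B) \ge 1$. The plan is to prove that, for every prime divisor $E$ over $Z$ with center $\overline{z}$, one has $a(E, Z, B_Z + M_Z) \ge \epsilon - \frac{1}{2}$ once we fix a suitable representative of the moduli part; this will give the desired bound on $\mld(Z\ni z, B_Z + M_Z)$.

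Fix such a prime divisor $E$ and a birational model $h\colon W \to Z$ on which $E$ appears as a divisor. Take a log resolution $g\colon Y \to X \times_Z W$ of (the normalization of) the fiber product, with induced morphism $\pi_Y\colon Y \to W$ of relative dimension $1$, and set $K_Y + B_Y := g^*(K_X + B)$, so that $(Y, B_Y)$ and $(X, B)$ are crepant. The canonical bundle formula applied to $\pi_Y$ yields
\[
K_Y + B_Y \sim_{\mathbb{R}} \pi_Y^*(K_W + B_W + M_W),
\]
where $B_W + M_W$ is the crepant pullback of $B_Z + M_Z$ under $h$, so that $a(E, Z, B_Z + M_Z) = 1 - \mult_E(B_W + M_W)$.

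A key input is that, since the generic fiber of $\pi$ is a rational curve, the moduli b-divisor of this lc-trivial fibration is $\mathbb{Q}$-trivial. Consequently, we can choose an effective representative $M_Z \ge 0$ (indeed, $M_Z = 0$ on a high enough model) with $\mult_E M_W = 0$. Then $a(E, Z, B_Z + M_Z) = c_E$, where
\[
c_E = \lct_{\eta_E}\bigl(\pi_Y^* E;\, (Y, B_Y)\bigr) = \min_i \frac{a(F_i, Y, B_Y)}{m_i} = \min_i \frac{a(F_i, X, B)}{m_i},
\]
with $\{F_i\}$ the irreducible components of $\pi_Y^{-1}(\eta_E)$ and $m_i$ their multiplicities in $\pi_Y^* E$. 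Each $F_i$ defines a divisorial valuation on $X$ with center lying over $\overline{z}$, so by assumption $a(F_i, X, B) \ge \epsilon$.

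The theorem is thereby reduced to the \emph{multiplicity estimate} $a(F_i, X, B) \ge m_i\bigl(\epsilon - \tfrac12\bigr)$ for every such component, which is the heart of the argument and the main obstacle. In the critical case $\epsilon = 1$ it amounts to showing that every fiber component over $\eta_E$ has multiplicity at most $2$; this is sharp, as Example~\ref{ex: counterex of 1/2-klt} realizes multiplicity $2$. I expect to prove it by a local analytic analysis near the generic point of $F_i$: the fibration $\pi_Y$ there should be modelled, up to analytic isomorphism, as a cyclic quotient of a trivial $\mathbb{P}^1$-fibration by $\bmu_{m_i}$ acting on $\mathbb{P}^1$ by a primitive character, so that the $\epsilon$-lc assumption on $(X,B)$ (pushed to this chart) translates into a numerical constraint on the weights of the $\bmu_{m_i}$-action, and yields the desired inequality between $a(F_i, X, B)$ and $m_i$. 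Combining this estimate with the displayed formula for $c_E$ and taking the infimum over all $E$ gives $\mld(Z \ni z, B_Z + M_Z) \ge \epsilon - \frac{1}{2}$, which is $\ge \frac{1}{2}$ since $\epsilon \ge 1$.
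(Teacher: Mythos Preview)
Your overall reduction---bound the log discrepancy of each exceptional $E$ over $\overline{z}$ by the lct of $\pi_Y^*E$ on a higher model---does parallel the paper's Claim~\ref{bound coefficient of birational divisor}. But there are two genuine gaps.

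First, the moduli b-divisor is \emph{not} $\mathbb{Q}$-trivial in general: for an lc-trivial fibration whose generic fiber is $(\mathbb{P}^1,B_\eta)$ with $\deg B_\eta=2$, the log structure on the fibers varies (think of four points with coefficient $\tfrac12$ and the cross-ratio), so $\mathbf{M}$ need not vanish on any model. What \cite[Theorem~8.1]{PS09} gives is only that $\mathbf{M}$ is b-semi-ample. The paper uses this to pass to a high model $Z'$ on which $\mathbf{M}_{Z'}$ is semi-ample and $\mathbf{B}_{Z'}+\Supp g^{-1}(\overline{z})$ is snc, then picks a \emph{single} general $L_{Z'}\sim_{\Qq}\mathbf{M}_{Z'}$ with $L_{Z'}\ge 0$ that avoids all the relevant divisors at once. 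Your argument, as written, would need a separate choice of $M_Z$ for each $E$, which does not yield a bound on $\mld(Z\ni z,B_Z+M_Z)$ for a fixed representative; and the fix via b-semi-ampleness also forces you to first reduce to the $\Qq$-boundary case (the paper does this in Lemma~\ref{the reduction to Q case}).

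Second, your ``multiplicity estimate'' $a(F_i,X,B)\ge m_i(\epsilon-\tfrac12)$ is exactly Theorem~\ref{High dim} applied over $W$, and this is the main content of the whole paper. Your proposed proof---that near the generic point of $F_i$ the fibration is analytically a cyclic $\bmu_{m_i}$-quotient of a trivial $\mathbb{P}^1$-bundle---is unfounded: that local picture only tells you the map of DVRs $\mathcal{O}_{W,\eta_E}\to\mathcal{O}_{Y,\eta_{F_i}}$ has ramification index $m_i$, which carries no information about $a(F_i,X,B)$. The actual proof reduces by hyperplane sections to $\dim X=2$, runs an MMP to reach a $\mathbb{P}^1$-bundle, and then bounds the lct of a smooth fiber $C$ with respect to a canonical pair $(X\ni P,B)$ on a smooth surface germ with $(B\cdot C)_P\le 2$; this last step (Theorem~\ref{Smooth local case general beta version} / Corollary~\ref{Smooth local case}) is the technical heart and uses either Puiseux expansions and convexity of lct polytopes, or Newton polytope methods. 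There is no cyclic-group shortcut.
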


The lower bound in Theorem~\ref{High dim SM} is optimal by Example~\ref{ex:lct is attained}.

As a corollary, we have the following global version of Theorem~\ref{High dim SM} with less technical notation involved.
\begin{cor}\label{cor: High dim SM}
Let $(X,B)$ be a pair, and $\pi: X\to Z$ a contraction between normal varieties such that

\begin{enumerate}
 \item $\dim X-\dim Z=1$,
 \item\label{thm High dim SM assum 2} $(X,B)$ is canonical and $B$ has no vertical irreducible component over $Z$,
 \item $K_X+B\sim_{\Rr,Z} 0$, and
 \item $X$ is of Fano type over $Z$.
\end{enumerate}
Then we can choose $M_Z\geq 0$ representing the moduli part, so that $(Z,B_Z+M_Z)$ is $\frac{1}{2}$-lc.
\end{cor}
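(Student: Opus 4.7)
The approach is to deduce Corollary~\ref{cor: High dim SM} from Theorem~\ref{High dim SM} via a local-to-global argument. First, I verify that the hypotheses of Theorem~\ref{High dim SM} hold at every codimension $\geq 1$ point $z\in Z$. Since $K_X+B\sim_{\Rr,Z}0$ and $(X,B)$ is canonical (hence sub-lc), the morphism $\pi:(X,B)\to Z$ is an lc-trivial fibration. Condition (1) is immediate. The generic fiber has dimension one, and since $X$ is of Fano type over $Z$, so is the generic fiber; a one-dimensional Fano type variety is $\Pp^1$, giving hypothesis (3) of Theorem~\ref{High dim SM}. Finally, $\mld(X/Z\ni z,B)\geq 1$ at every codimension $\geq 1$ point $z$ because $(X,B)$ is canonical globally, so hypothesis (2) holds. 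The assumption that $B$ has no vertical irreducible component is compatible with the lc-trivial fibration setup and, together with canonicity of $(X,B)$, forces any vertical prime to enter $B$ with coefficient $0$.

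Second, applying Theorem~\ref{High dim SM} at each codimension $\geq 1$ point $z$ yields a (possibly $z$-dependent) effective representative $M_Z^{(z)}$ of the moduli part such that $\mld(Z\ni z,\, B_Z+M_Z^{(z)})\geq\tfrac{1}{2}$. To pass to a single global representative $M_Z\geq 0$ realizing the bound at every point simultaneously, I would use that the moduli b-divisor $\mathbf{M}$ is b-nef and good by Ambro, and in fact b-semi-ample in the relative dimension one case via the known canonical bundle formula for fibrations with $\Pp^1$ generic fiber (Prokhorov--Shokurov). Hence on a sufficiently high birational model $Z'\to Z$, $\mathbf{M}_{Z'}$ descends as a semi-ample $\Qq$-Cartier divisor, and by Bertini I may choose a general effective member $M_{Z'}\geq 0$ of the corresponding linear system whose support avoids any prescribed subvariety of positive codimension. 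Pushing forward to $Z$, the coefficient of $M_Z$ along any prime divisor $D\subset Z$ supporting $B_Z$ is then $0$, so the log discrepancy at the generic point $\eta_D$ equals $1-\mathrm{coeff}_D(B_Z)$, which is $\geq\tfrac{1}{2}$ by Theorem~\ref{High dim SM} applied at $\eta_D$. For centers of higher codimension, generality of $M_Z$ together with the local statement of Theorem~\ref{High dim SM} at those centers delivers the same bound, yielding the global $\tfrac{1}{2}$-lc property.

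The main obstacle is precisely this globalization step: manufacturing one effective $M_Z$ from the pointwise data produced by Theorem~\ref{High dim SM}. The crucial ingredient is the b-semi-ampleness of the moduli part for $\Pp^1$-fibrations combined with a Bertini-type genericity argument on the descended semi-ample system, while the verification of the local hypotheses of Theorem~\ref{High dim SM} reduces immediately to the assumptions of the corollary.
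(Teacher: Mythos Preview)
Your verification of the local hypotheses of Theorem~\ref{High dim SM} is correct, and your instinct that one must manufacture a \emph{single} global $M_Z\geq 0$ (rather than the $z$-dependent $M_Z^{(z)}$ that the statement of Theorem~\ref{High dim SM} provides) is exactly right. The paper's own proof is the one-liner ``apply Theorem~\ref{High dim SM} to all codimension~$\geq 1$ points on $Z$'', which tacitly relies on the fact that the \emph{construction} of $M_Z$ inside the proof of Theorem~\ref{High dim SM} (b-semi-ampleness of $\mathbf{M}$, then a general $L_{Z'}\sim_{\Qq}\mathbf{M}_{Z'}$ on a high model) already works uniformly in $z$. So your outline is not a different route; it is simply the paper's argument made explicit.

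There is, however, a genuine soft spot in your last step. For centers of codimension~$\geq 2$ you write that ``generality of $M_Z$ together with the local statement of Theorem~\ref{High dim SM} at those centers delivers the same bound''. This does not close the argument: invoking the \emph{statement} of Theorem~\ref{High dim SM} at such a center $z$ only produces some $M_Z^{(z)}$, not the particular $M_Z$ you have already fixed by Bertini, and you cannot run a Bertini argument against uncountably many centers simultaneously. What actually makes a single general $M_Z$ work globally is the estimate hidden in the proof of Theorem~\ref{High dim SM} (Claim~\ref{bound coefficient of birational divisor}): every birational component $P$ of the b-discriminant $\mathbf{B}$ satisfies $d_P\leq \tfrac{3}{2}-\mld(X/Z\ni \Center_Z(P),B)\leq \tfrac{1}{2}$ under your hypotheses. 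Hence on a model $g:Z'\to Z$ where $\mathbf{M}_{Z'}$ is semi-ample and $\mathbf{B}_{Z'}$ is simple normal crossing, a general $L_{Z'}\in |\mathbf{M}_{Z'}|_{\Qq}$ makes $(Z',\mathbf{B}_{Z'}+L_{Z'})$ globally $\tfrac{1}{2}$-lc, and this descends to $(Z,B_Z+g_*L_{Z'})$. In other words, the missing ingredient is the uniform coefficient bound on the b-divisor $\mathbf{B}$, not a second appeal to the pointwise theorem.
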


\begin{rem}
\begin{enumerate}
\item We remark that if $\dim X-\dim Z=1$, 
then assumption~\eqref{thm High dim SM assum 2} in Corollary~\ref{cor: High dim SM} is equivalent to the assumption that $\mld(X/Z\ni z, B)\geq 1$ for any codimension $\geq 1$ point $z\in Z$. 

\item Note that $\frac{1}{2}$ is the maximal accumulation point of 
the set of minimal log discrepancies in dimension $2$ (see \cite[Corollary~3.4]{Ale93}, \cite{Sho94}). 
Thus it would be interesting if one could give a new proof of Iskovskikh's conjecture by applying Theorems~\ref{thm: Shokurov P1 fibration lct 1/2} and~\ref{High dim SM} without using the classification of terminal singularities in dimension $3$. In fact, we can apply Corollary~\ref{cor: High dim SM} to show that in the setting of Iskovskikh's conjecture, $Z$ is $\frac{1}{2}$-klt, see Corollary~\ref{cor weak isk}. 
Recall that in order to prove Iskovskikh's conjecture, it suffices to show that $Z$ is $\frac{2}{3}$-klt (see \cite[Lemma~5.1]{Jia19}), but our method could not archive this. The reason is that in Corollary~\ref{cor weak isk}, there is no assumption on $\dim X$, but Prokhorov provides us Example~\ref{ex: counterex 4.8} showing that Corollary~\ref{cor weak isk} can not be improved if $\dim X\geq 4.$ 
\end{enumerate}
\end{rem}

Theorem~\ref{High dim SM} is a consequence of the following result which gives a lower bound of certain log canonical thresholds for lc-trivial fibrations. We refer the readers to \cite[Problem~7.18]{CH21} for more discussions.

\begin{thm}[cf. {\cite[Conjecture]{Sho14}}] \label{High dim}
Let $\pi: (X,B)\to Z$ be an lc-trivial fibration and $z\in Z$ a codimension $1$ point, such that
\begin{enumerate}
\item $\dim X-\dim Z=1$,
 \item $\mld(X/Z\ni z,B)\geq 1$, and
\item the generic fiber of $\pi$ is a rational curve.
\end{enumerate}
Then 
$$\lct(X/Z\ni z, B; \pi^*\overline{z})\geq \mld(X/Z\ni z,B)-\frac{1}{2}\geq \frac{1}{2}.$$ In particular, if $B$ is effective, then the multiplicity of each irreducible component of $\pi^*{z}$ is
bounded from above by $2$.
\end{thm}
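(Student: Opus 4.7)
The plan is to reduce to a local computation on a surface fibration over a smooth curve, and then to analyze the multiplicities of the central fibre on a log resolution. Since $z$ has codimension $1$, I would first cut $Z$ by $\dim Z-1$ general hyperplane sections and let $Z'\subset Z$ be their intersection, a smooth curve meeting $\overline{z}$ transversely at a general closed point $z'$. Setting $X':=\pi^{-1}(Z')$, $B':=B|_{X'}$, and $\pi':=\pi|_{X'}$, a standard Bertini-plus-inversion-of-adjunction argument shows that $X'$ is normal, $\pi':(X',B')\to Z'$ is again an lc-trivial fibration with rational generic fibre, $\mld(X'/Z'\ni z',B')\geq\mld(X/Z\ni z,B)$, and crucially $\lct(X/Z\ni z,B;\pi^*\overline{z})=\lct(X'/Z'\ni z',B';\pi'^{*}z')$. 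So one may assume $\dim Z=1$, $\dim X=2$, and $z$ is a closed point.

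Next, let $f:Y\to X$ be a log resolution so that $\Exc(f)\cup f^{-1}_*B\cup(\pi\circ f)^{-1}(z)$ has simple normal crossings, and write $K_Y+\tilde B=f^*(K_X+B)$. The assumption $\mld(X/Z\ni z,B)\geq 1$ forces every exceptional divisor $E_i$ of $f$ over $\overline{z}$ to satisfy $a(E_i,X,B)\geq 0$, and every prime component of $B$ whose image lies in $\overline{z}$ to have coefficient $\leq 1-\mld\leq 0$. Writing the central fibre of $\pi\circ f$ as $Y_0=\sum n_jG_j$, the $G_j$ are smooth rational curves forming a tree, and on this log resolution the relative lct equals $\min_j a_l(G_j,X,B)/n_j$ (by crepancy, $a_l(G_j,Y,\tilde B)=a_l(G_j,X,B)$, and on an snc pair the infimum of $a_l/\operatorname{ord}(\pi^*\overline z)$ is attained on the finitely many $G_j$); moreover $a_l(G_j,X,B)\geq\mld(X/Z\ni z,B)$ because each $G_j$ lies over $\overline{z}$. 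It therefore suffices to prove
\[
n_j\bigl(\mld(X/Z\ni z,B)-\tfrac12\bigr)\leq a_l(G_j,X,B)\qquad\text{for every component $G_j$ of $Y_0$.}
\]

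For the final bound I would exploit the geometry of rational fibrations on smooth surfaces. Adjunction gives $K_Y\cdot Y_0=-2$, and crepancy gives $\tilde B\cdot Y_0=2$, so the horizontal part of $\tilde B$ is a bisection of $\pi\circ f$; Zariski's lemma ($Y_0\cdot G_j=0$ and negative-semidefiniteness of the intersection form on fibre components, with kernel $\mathbb{R}\cdot Y_0$) then determines the $n_j$ up to scale. A useful observation is that at any blow-up of a node of the tree both $n$ and $a_l(\cdot,X,B)$ are additive, so the ratio $a_l/n$ at the new exceptional is a convex combination of the ratios at its two neighbours; by induction on the number of blow-ups needed to produce $Y$ from a relatively minimal $\mathbb{P}^1$-bundle, the desired inequality reduces to verifying it at the ``initial'' components coming from a singular fibre of this minimal model and at the exceptional divisors extracted by the subsequent blow-ups. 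The main obstacle, and the place where the real work will lie, is to convert the global identity $\tilde B\cdot Y_0=2$ into sharp vertex-by-vertex control: the canonicity of $(X,B)$ restricts $\Exc(f)$ to Du Val-type ADE configurations (twisted by the boundary), and one must combine this with the fact that the bisection $\tilde B^{\mathrm{hor}}$ can meet the fibre tree only at very few points in order to rule out any vertex where the inequality fails. Example~\ref{ex:lct is attained} shows the bound is attained, and should serve as the skeleton of the required case analysis.
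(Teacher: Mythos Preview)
Your reduction to the surface case via general hyperplane sections is essentially the paper's inductive step, and is fine. The gap is in the surface case itself.

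The convex-combination observation for $a_l/n$ under blow-ups of \emph{nodes} is correct, but it does not drive an induction from a $\mathbb{P}^1$-bundle to an arbitrary log resolution: to reach $Y$ you must also blow up smooth points of single fibre components, precisely the points where the horizontal bisection $\tilde B^{\mathrm{hor}}$ meets the fibre. At such a blow-up neither $n$ nor $a_l$ is ``additive with a neighbour''; instead $n_E=n_j$ and $a_l(E)=1+a_l(G_j)-h$ with $h=(\tilde B^{\mathrm{hor}}\cdot G_j)_P$, and $h$ can be as large as $2$. Iterating such blow-ups (and then node blow-ups) is exactly how the extremal ratio $a_l/n=\mld-\tfrac12$ arises in Example~\ref{ex:lct is attained}, so you cannot hope to absorb these steps into a convexity argument without a sharp local estimate. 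Your suggestion that ``canonicity of $(X,B)$ restricts $\Exc(f)$ to Du Val-type ADE configurations'' is incorrect: with boundary, $X$ may be smooth and the resolution tree is governed by the analytic type of $B$, not by an ADE classification; so the proposed case analysis has no foothold.

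The paper handles the surface case differently. It first runs an MMP to reduce to the situation $X$ smooth, $B\geq 0$, $\mld=1$, and then contracts to a $\mathbb{P}^1$-bundle, so that the fibre $F\cong\mathbb{P}^1$ is smooth and irreducible and the only constraint is $(B\cdot F)_P\leq 2$ at each point $P\in F$. The entire difficulty is then concentrated in a local statement on a smooth surface germ: if $\mult_P B\leq 1$ and $(B\cdot C)_P\leq 2$ with $C$ a smooth curve, then $\lct(X\ni P,B;C)\geq\tfrac12$ (Corollary~\ref{Smooth local case}, a special case of Theorem~\ref{Smooth local case general beta version}). That local bound is the substantive input your proposal lacks; it is proved by passing to $\widehat{\mathcal{O}}_{X,P}$, reducing via the log canonical threshold polytope to the case where $\Supp B$ is irreducible, and computing with the first Puiseux pair (or, alternatively, with Newton polytopes). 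Without an analogue of this local estimate, the ``real work'' you flag cannot be carried out along the lines you sketch.
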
 

The bounds in Theorem~\ref{High dim} are optimal by Example~\ref{ex:lct is attained}. 

Y. Chen informed us that together with Birkar, they also got the lower bound $\frac{1}{2}$ in Theorem~\ref{High dim} for toric morphisms between toric varieties in an earlier version of \cite{ BC21}. As a related result, when $\dim X-\dim Z=2$, Mori and Prokhorov \cite{MP09} showed that any 3-dimensional terminal del Pezzo fibration has no fibers of multiplicity $>6$.

It turns out that Theorem~\ref{High dim} can be reduced to a local problem on estimating the lower bound of the log canonical threshold of a smooth curve with respect to a canonical pair on a smooth surface germ, see Corollary~\ref{Smooth local case}. We prove a general result here as it might have broader applications in other topics in birational geometry (cf. \cite[Corollary~6.46]{KSC04}).

\begin{thm}\label{Smooth local case general beta version}
Let $(X\ni P, B)$ be a germ of surface pair such that $X$ is smooth and $\mult_P B\leq 1$. Let $C$ be a smooth curve at $P$ such that $C\nsubseteq\Supp B$.
Denote $\mult_P B=m$, $(B\cdot C)_P=I$. Then $\lct(X\ni P,B;C)\geq \min\{1, 1+\frac{m}{I}-m\}$. 
\end{thm}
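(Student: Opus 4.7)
The plan is to prove the bound by induction on the complexity of a log resolution of $(X\ni P, B+C)$, reducing via a single blow-up of $P$ and the crepant pullback formula. We may assume $L:=\min\{1,1+m/I-m\}<1$, equivalently $I>1$ and $L=1+m/I-m$; the complementary case $\lct_P(X,B;C)\geq 1$ follows from inversion of adjunction, since $(X,B+C)$ is lc at $P$ iff the different $B|_C=I\cdot[P]$ on the smooth curve $C$ has coefficient at most $1$, i.e.\ $I\leq 1$.

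Let $f\colon Y\to X$ be the blow-up of $P$ with exceptional divisor $E$, and set $Q:=\tilde C\cap E$. Using the key identity $m+L-1=m/I$, the crepant pullback reads
$$f^{\ast}(K_X+B+LC) \;=\; K_Y+\tilde{B}+L\tilde{C}+(m/I)\,E,$$
so it suffices to check that $(Y,\tilde{B}+L\tilde{C}+(m/I)E)$ is lc at every point of $f^{-1}(P)$. The projection formula yields $\tilde{B}\cdot E=m$ and $\tilde{B}\cdot\tilde{C}=I-m$ locally over $P$. At any $R\in E\setminus\{Q\}$ lying on $\tilde{B}$, only $\tilde{B}$ and $E$ meet; setting $m_R:=\mult_R\tilde{B}$ and $I_R:=(\tilde{B}\cdot E)_R$, the relation $\sum_R I_R=m\leq 1$ forces $I_R\leq 1$, and the inductive hypothesis applied to $(Y\ni R,\tilde{B})$ with curve $E$ gives
$$\lct_R(Y,\tilde{B};E)\;\geq\;\min\{1,\,1+m_R/I_R-m_R\}\;=\;1,$$
using $I_R\leq 1$ to deduce $m_R/I_R\geq m_R$. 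Since the coefficient $m/I\leq 1$ of $E$ lies within this threshold, the pair is lc at each such $R$.

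The main obstacle is verifying lc at $Q$, where $\tilde{B}$, $\tilde{C}$, and $E$ may all simultaneously meet, with $m_Q:=\mult_Q\tilde{B}\leq \min\{I-m,\,m\}$. A naive application of the theorem to the pair $(\tilde{B}+(m/I)E,\tilde{C})$ at $Q$ fails, because $\mult_Q(\tilde{B}+(m/I)E)=m_Q+m/I$ can exceed $1$, violating the multiplicity hypothesis. To close the induction at $Q$, I would either establish a strengthened inductive statement that permits an additional smooth boundary divisor through the base point (with $E$ playing that role in the recursive application), or iterate the blow-up of $Q$ and carefully track how the invariants $(m,I,m_Q)$ evolve at each subsequent exceptional divisor, showing that the coefficients remain within the lc threshold. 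The delicate combinatorial bookkeeping of this evolution, particularly the interaction of $\tilde{B}$'s tangential and transversal components relative to $\tilde{C}$, is the technical crux of the argument.
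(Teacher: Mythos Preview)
Your proposal identifies the right difficulty but does not resolve it; as written the argument is incomplete. The setup of the blow-up and crepant pullback is correct, and your treatment of points $R\in E\setminus\{Q\}$ is fine. But at $Q=\tilde C\cap E$ you only sketch two possible strategies (a strengthened inductive statement allowing an extra smooth boundary divisor, or tracking the evolution of $(m,I,m_Q)$ under further blow-ups) without carrying either out. This is precisely the obstruction the paper itself flags in its sketch of proofs: the hypothesis $\mult_P B\le 1$ (and the intersection bound you need) does not behave well under blow-ups, so a naive induction breaks down. Concretely, at $Q$ the boundary is $\tilde B+L\tilde C+(m/I)E$ with $\mult_Q(\tilde B+(m/I)E)=m_Q+m/I$ possibly exceeding $1$, and you now have \emph{two} smooth transversal curves $\tilde C$, $E$ through $Q$, each with its own intersection data against $\tilde B$; the theorem as stated does not apply to this configuration. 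Formulating a workable strengthened inductive hypothesis here is genuinely nontrivial, and I do not see how the ``iterated blow-up with bookkeeping'' option converges without essentially redoing the whole argument in a different language.

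The paper avoids this obstacle entirely by a different route. It passes to the completion $\widehat X\cong\widehat{\mathbb C^2}$ and decomposes $B$ into its irreducible formal branches. For a single irreducible branch $B_0$ with first Puiseux pair $(m_0,n_0)$, an explicit computation following Kuwata gives a closed formula for $\lct(\widehat{\mathbb C^2}\ni o;\,\lambda B_0+tC)$ in terms of $m_0$, $n_0$, $I_0=(B_0\cdot C)_o$, from which the desired bound follows directly (Corollary~\ref{cor lct B C 1}). For several branches the paper uses convexity of the log canonical threshold in the \emph{coefficients} of $B$: the affine slice of the LCT polytope cut out by the two linear constraints $\sum t_i m_i=m$ and $\sum t_i I_i=I$ has all its vertices supported on at most two of the $B_i$, reducing to the two-branch case, which is then handled by a direct convex combination of the single-branch estimates together with the Cauchy--Schwarz inequality. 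The crucial point is that this reduction preserves the hypotheses exactly, unlike a blow-up. The appendix gives an alternative proof of the irreducible case via Newton polytopes rather than Puiseux pairs.
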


Example~\ref{ex optimal lower bound} shows that the lower bound in Theorem~\ref{Smooth local case general beta version} is optimal (even in the case when $\Supp B$ is irreducible). It would be interesting to get an optimal lower bound of $\lct(X\ni P,B;C)$ if we do not assume that $C$ is smooth in Theorem~\ref{Smooth local case general beta version} as it might be related to alpha invariants.

\medskip

It would also be interesting to ask the following question.

\begin{question}
When $\dim X=3$,
can one give a complete local classification of the extremal case in Conjecture~\ref{con: Shokurov P1 fibration lct 1/2} when $Z$ is strictly $\frac{1}{2}$-lc? Or more generally, can one give a complete local classification in Conjecture~\ref{con: Shokurov P1 fibration lct 1/2} when $Z$ is singular?
\end{question}

\noindent\textbf{Sketch of proofs.} 
By applying \cite[Theorem 8.1]{PS09}, we may reduce Theorem~\ref{High dim SM} to Theorem~\ref{High dim}. Here the sub-pair setting plays a key role, which makes this reduction step simpler than that of the pair setting (cf. \cite[Lemma 3.4, Proposition 3.5]{Bir16crelle}), and it enables us to treat the case $\mld(X/Z\ni z,B)>1$. On the other hand, the sub-pair setting causes new technical difficulties in the proof of Theorem~\ref{High dim}. 
By taking hyperplane sections of the base $Z$ we may reduce Theorem~\ref{High dim} to the case $\dim X=2$. By an MMP argument, we may 
 reduce Theorem~\ref{High dim} to the case when $X\to Z$ is a $\mathbb{P}^1$-bundle and $B\ge0$, so the problem is reduced to a special case of Theorem~\ref{Smooth local case general beta version} when $\mult_P B\leq 1$ and $(B\cdot C)_P\leq 2$. Since the conditions $\mult_P B\leq 1$ and $(B\cdot C)_P\leq 2$ do not behave well under blow-ups, one may encounter difficulties by applying the ideas in \cite{Ale93,CH21,HL20} which deal with the minimal log discrepancies for surfaces. The key idea is that, we consider $\widehat{X}$, the completion of $X$ along $P$, and decompose $B$ into irreducible components on $\widehat{X}$. By using the log canonical threshold polytope and applying the convexity of log canonical thresholds in a careful way, we may reduce Theorem~\ref{Smooth local case general beta version} to the case when $\Supp B$ is irreducible on $\widehat{X}$. Here recall that $\lct(X\ni P, B;C)=\lct(\widehat{X}\ni P, B;C)$. Finally, for this last case, following the ideas in \cite{Kuw99}, we may give a lower bound of $\lct(\widehat{X}\ni P,B;C)$ by using the first pair of Puiseux exponents of $B$. The proof of Theorem~\ref{Smooth local case general beta version} is provided in Section~\ref{sec lct}, and the proofs of other main results in this paper are provided in Section~\ref{sec proofs}. We refer the reader to Appendix~\ref{sec appendix} for a different proof of a weaker version of Theorem~\ref{Smooth local case general beta version} (see Theorem~\ref{Smooth local case general alpha version}) which does not use the convexity and Appendix~\ref{appendix: B second proof} for another proof of Theorem~\ref{Smooth local case general beta version} which does not rely on \cite{Kuw99}.

\medskip

\noindent\textbf{Acknowledgments.} We are grateful to Professor~V.~V.~Shokurov for sharing with us his conjecture (Conjecture~\ref{con: Shokurov P1 fibration lct 1/2}) and for a lot of useful discussions and insightful suggestions. Especially, Professor~Shokurov suggested us to consider sub-pairs in the formulation of main results.
The third author would like to thank his advisor Chenyang Xu for his support. Part of this work was done while the third author visited Zhiyu Tian at BICMR, Peking University during 2020 Fall Semester, and he would like to thank their hospitality. We would like to thank Caucher Birkar, Yifei Chen, Jihao Liu, Shigefumi Mori, and Yuri Prokhorov for helpful comments. The second author was supported by National Key Research and Development Program of China (Grant No. 2020YFA0713200).

\section{Preliminaries}

In this section we collect basic definitions and results. We adopt the standard notation and definitions in \cite{KM98} and \cite{BCHM10}.

\subsection{Divisors}
Let $\KK$ be either the rational number field $\Qq$ or the real number field $\Rr$.
 Let $X$ be a normal variety. A {\it $\KK$-divisor} is a finite $\KK$-linear combination $D=\sum d_{i} D_{i}$ of prime Weil divisors $D_{i}$, and $d_{i}$ denotes the {\it coefficient} of $D_i$ in $D$. A {\it $\KK$-Cartier divisor} is a $\KK$-linear combination of Cartier divisors. 

 We use $\sim_{\KK}$ to denote the $\KK$-linear equivalence between $\KK$-divisors. For a projective morphism $X\to Z$, we use $\sim_{\KK,Z}$ to denote the relative $\KK$-linear equivalence and use $\equiv_{Z}$ to denote the relative numerical equivalence. 

\begin{defn}[cf. \cite{PS09}]\label{defn bdiv}
Let $X$ be a normal variety. Consider an infinite linear combination $\mathbf{D}:=\sum_D d_D D$, where $d_D\in\mathbb{K}$ and the infinite sum runs over all divisorial valuations of the function field of $X$. For any birational model $Y$ of $X$, the \emph{trace} of $\mathbf{D}$ on $Y$ is defined by $\mathbf{D}_Y:=\sum_{\mathrm{codim}_Y D=1} d_D D $. A \emph{b-$\mathbb{K}$-divisor} (or \emph{b-divisor} for short when the base field is clear) is a possibly infinite linear combination of divisorial valuations $\mathbf{D}=\sum_D d_D D$, such that on each birational model $Y$ of $X$, the trace $\mathbf{D}_Y$ is a $\mathbb{K}$-divisor, or equivalently, $\mathbf{D}_Y$ is a finite sum. 
If $d_D\neq 0$ in $\mathbf{D}$ for some $D$, $D$ is called a \emph{birational component} of $\mathbf{D}$.

Let $D$ be a $\mathbb{K}$-Cartier divisor on $X$. The {\it Cartier closure} of $D$ is the b-divisor $\overline{D}$ whose trace on every birational model $f:Y\to X$ is $f^*D$.

A b-divisor $\mathbf{D}$ is said to be
\emph{b-semi-ample} if 
there is a birational model $X'$ over $X$ such that $\mathbf{D}_{X'}$ is $\mathbb{K}$-Cartier and semi-ample, and $\mathbf{D}=\overline{\mathbf{D}_{X'}}$.
\end{defn}

\subsection{Pairs and singularities}\label{the definitions of singularities and pairs}

\begin{defn}\label{defn contraction}
Let $\pi: X\to Z$ be a morphism between varieties.
 We say that $\pi: X \to Z$ is a \emph{contraction} if $\pi$ is projective and $\pi_*\Oo_X = \Oo_Z$. In particular, $\pi$ is surjective and has connected fibers.
\end{defn}

\begin{defn}\label{Horizontal}
Let $\pi: X\to Z$ be a contraction between normal varieties. 
For a prime divisor $E$ on $X$, $E$ is said to be \emph{horizontal} over $Z$ if $E$ dominates $Z$, and $E$ is said to be \emph{vertical} over $Z$ if $E$ does not dominate $Z$.
An $\mathbb{R}$-divisor on $X$ is said to be \emph{vertical} over $Z$ if all its irreducible components are vertical over $Z$.
\end{defn}

\begin{defn}[{cf. \cite[Definition 3.2]{CH21}}] \label{defn sing}
A \emph{sub-pair} $(X, B)$ consists of a normal variety $X$ and an $\mathbb{R}$-divisor $B$ on $X$ such that $K_X + B$ is $\Rr$-Cartier. We say that $(X, B)$ is a \emph{pair} if $(X, B)$ is a sub-pair and $B$ is effective. 

A \emph{(relative) sub-pair} $(X/Z\ni z, B)$ consists of normal varieties $X,Z$, a contraction $\pi: X\to Z$, a scheme-theoretic point $z\in Z$, and an $\mathbb{R}$-divisor $B$ on $X$ such that $K_X + B$ is $\Rr$-Cartier and $\dim z < \dim X$. We say that $(X/Z\ni z, B)$ is a \emph{(relative) pair} if $(X/Z\ni z, B)$ is a sub-pair and $B$ is effective. We say that a pair $(X/Z\ni z, B)$ is a \emph{germ} near $z$ if $z$ is a closed point. 

When $Z=X$, $z=x$, and $\pi$ is the identity map, we will use $(X\ni x, B)$ instead of $(X/Z\ni z, B)$ for simplicity. When $B=0$, we will use $X$ or $X/Z\ni z$
instead of $(X, 0)$ or $(X/Z\ni z, 0)$ for simplicity.
\end{defn}

\begin{defn}\label{defn: relative mld}
 Let $(X/Z\ni z,B)$ be a sub-pair with contraction $\pi: X\to Z$ and $E$ a prime divisor over $X$. Let $\phi :Y\to X$ be a proper birational morphism such that $E$ is a divisor on $Y$ and write $K_Y+B_Y=\phi^{*}(K_X +B)$. The \emph{log discrepancy} of $E$ with respect to $(X, B)$ is defined to be $a(E, X, B):= 1-\mult_EB_Y$, which is independent of the choice of $Y$.
 
Denote
$$\fD(X/Z\ni z):=\{E\mid E\text{ is a prime divisor over }X, \pi(\Center_{X}(E))=\overline{z}\}.$$
The {\it minimal log discrepancy} of $(X/Z\ni z,B)$ is defined to be
$$\mld(X/Z\ni z,B):=\inf\{a(E,X,B) \mid {E\in \fD(X/Z\ni z)}\}.$$

By \cite[Lemma~3.5]{CH21}, the infimum is a minimum if $(X/Z\ni z,B)$ is an lc sub-pair, and it can be computed on a log resolution $\phi: Y\to (X, B)$ where $\Supp(\phi^{-1}(\pi^{-1}(\overline{z})))+\phi_*^{-1}\Supp B+\Exc(\phi)$ is a simple normal crossing divisor. 

When $X=Z$, $z=x$, and $\pi$ is the identity map, we use $\mld(X\ni x,B)$ instead of $\mld(X/Z\ni z,B)$ for simplicity. 
 \end{defn}

\begin{defn}\label{defn: singularities}
Fix a non-negative real number $\epsilon$.
We say that the sub-pair $(X/Z\ni z, B)$ is {\it $\epsilon$-lc} (respectively, {\it $\epsilon$-klt}, {\it klt}, {\it lc}) 
if $\mld(X/Z\ni z,B)\ge \epsilon$ (respectively, $> \epsilon$, $>0$, $\ge0$). 

We say that $(X, B)$ is $\epsilon$-lc (respectively, $\epsilon$-klt, klt, lc) if $(X\ni x, B)$ is so for any codimension $\geq 1$ point $x\in X$; 
we say that $(X, B)$ is \emph{canonical} (respectively, \emph{terminal}) if $a(E,X,B)\ge 1$ (respectively, $a(E,X,B)> 1$) for any exceptional prime divisor $E$ over $X$. These coincide with the usual definitions (cf. \cite[Definition~2.34]{KM98}).
\end{defn}

The following lemma is well-known to experts, which says that being lc over $z\in Z$ is an open condition.
\begin{lem}\label{lem lc near z}
Let $(X/Z\ni z, B)$ be a sub-pair with contraction $\pi: X\to Z$ and fix a log resolution $f:Y\to (X, B)$ such that $f^{-1}\pi^{-1}(\overline{z})$ is a simple normal crossing divisor and write $K_Y+B_Y=f^*(K_X+B)$.
The following are equivalent.
\begin{enumerate}
 \item $(X/Z\ni z, B)$ is lc;
 \item for any prime divisor $E'$ on $Y$ with $\pi(f(E'))=\overline{z}$, $\mult_{E'} B_Y\leq 1$;
 \item for any prime divisor $E$ on $Y$ with $\pi(f(E))\ni z$, $\mult_{E} B_Y\leq 1$;
 \item there exists an open neighborhood $U$ of $z\in Z$ such that $(\pi^{-1}(U), B|_{\pi^{-1}(U)})$ is lc.
\end{enumerate}

\end{lem}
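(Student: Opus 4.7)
The plan is to establish the equivalences via the cyclic chain $(1)\Rightarrow(2)\Rightarrow(3)\Rightarrow(4)\Rightarrow(1)$. The implication $(1)\Rightarrow(2)$ is immediate: a prime divisor $E'$ on $Y$ with $\pi(f(E'))=\overline{z}$ has $\Center_X(E')=f(E')$, so $E'\in\fD(X/Z\ni z)$, and hence $\mult_{E'}B_Y = 1-a(E',X,B)\leq 1$ by $(1)$. Dually, $(4)\Rightarrow(1)$: any $E\in\fD(X/Z\ni z)$ satisfies $\pi(\Center_X(E))=\overline{z}\ni z$, whence $\Center_X(E)\cap\pi^{-1}(U)\neq\emptyset$ for every open $U\ni z$; thus $E$ descends to a divisorial valuation on $\pi^{-1}(U)$ and the invariance of log discrepancy under restriction to an open subset gives $a(E,X,B)=a(E,\pi^{-1}(U),B|_{\pi^{-1}(U)})\geq 0$ under $(4)$.

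For $(3)\Rightarrow(4)$, I would shrink $Z$ to an affine open neighborhood $U\ni z$ chosen so that every prime divisor $D$ on $Y$ with $\mult_D B_Y>1$ and $z\notin\pi(f(D))$ satisfies $\pi(f(D))\cap U=\emptyset$; this is possible because only finitely many $D$ have $\mult_D B_Y>1$, and each such $\pi(f(D))$ not containing $z$ is a closed subset of $Z$ which can be excluded by a further shrinking. Combined with $(3)$ (which handles the divisors $D$ with $\pi(f(D))\ni z$), every component of $B_Y|_{f^{-1}(\pi^{-1}(U))}$ has coefficient $\leq 1$. The restricted morphism $f|_{f^{-1}(\pi^{-1}(U))}$ is still an SNC log resolution of $(\pi^{-1}(U),B|_{\pi^{-1}(U)})$, so the classical characterization of log canonical singularities on SNC log smooth sub-pairs (cf.\ \cite[Corollary~2.31]{KM98}) yields that $(\pi^{-1}(U),B|_{\pi^{-1}(U)})$ is lc.

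The main obstacle is $(2)\Rightarrow(3)$, because $(2)$ only controls ``vertical'' prime divisors $E'$ on $Y$ with $\pi(f(E'))=\overline{z}$, whereas $(3)$ also involves ``horizontal'' divisors $E$ with $\pi(f(E))\supsetneq\overline{z}$ whose image merely contains $z$. I would argue by contrapositive: if some horizontal prime divisor $E$ on $Y$ satisfies $\mult_E B_Y>1$, pick a smooth codimension-two subvariety $W\subseteq E\cap f^{-1}(\pi^{-1}(\overline{z}))$ mapping surjectively onto $\overline{z}$. Such a $W$ exists by a dimension count on the proper morphism $\pi\circ f|_E\colon E\to \pi(f(E))$, whose fiber over $\overline{z}$ has dimension at least $\dim E-\dim \pi(f(E))+\dim\overline{z}$. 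I then perform iterated blow ups along $W$ and its successive transforms, tracking how the excess coefficient $\mult_E B_Y-1>0$ propagates to each newly introduced exceptional divisor through the SNC intersection calculus of $\Supp B_Y+f^{-1}\pi^{-1}(\overline{z})$. After sufficiently many iterations, one obtains on a refined log resolution $Y'\to Y$—still satisfying the SNC hypothesis on $(f')^{-1}\pi^{-1}(\overline{z})$—an exceptional prime divisor $E^*$ with $\pi\circ f'(E^*)=\overline{z}$ and $\mult_{E^*}B_{Y'}>1$, contradicting $(2)$ for $Y'$. The principal technical difficulty is the careful bookkeeping showing that, despite the coefficient of the first exceptional divisor typically dropping below $1$, the accumulated coefficient after repeatedly blowing up intersections with existing components of positive coefficient eventually surpasses $1$ while keeping the center of $E^*$ above $\overline{z}$.
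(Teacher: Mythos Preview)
Your cyclic chain $(1)\Rightarrow(2)\Rightarrow(3)\Rightarrow(4)\Rightarrow(1)$ and the blow-up argument for the hard implication are essentially the same strategy as the paper's proof. The paper organizes the implications slightly differently---it first observes that conditions~(2) and~(3) are independent of the chosen log resolution (``by direct computations''), whence $(2)\Rightarrow(1)$ and $(3)\Rightarrow(4)$; then it proves $(1)\Rightarrow(3)$ by the iterated blow-up along $E\cap f^{-1}\pi^{-1}(\overline{z})$, citing \cite[Corollary~2.31]{KM98}.

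There is, however, a small logical gap in your step $(2)\Rightarrow(3)$. You assume~(2) holds on the fixed log resolution $Y$, then after blowing up you produce a divisor $E^*$ on a higher model $Y'$ with $\pi\circ f'(E^*)=\overline{z}$ and $\mult_{E^*}B_{Y'}>1$, and declare this ``contradicts~(2) for $Y'$.'' But~(2) was only assumed for $Y$; you have not argued that~(2) for $Y$ implies~(2) for $Y'$. The paper sidesteps this by proving $(1)\Rightarrow(3)$ instead: since~(1) is a statement about all divisors over $X$, the divisor $E^*$ directly contradicts~(1), regardless of which model it lives on. Your argument is easily repaired by inserting the observation that~(2) is independent of the log resolution (equivalently, $(2)\Leftrightarrow(1)$ via \cite[Corollary~2.31]{KM98})---a fact you already invoke implicitly in your $(3)\Rightarrow(4)$ step---but as written the chain does not quite close.

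A minor point on the blow-up bookkeeping: the ``successive transforms'' you describe should be the intersections of the strict transform of $E$ (which retains coefficient $>1$) with the newest exceptional divisor, not transforms of the original $W$. Your parenthetical remark about accumulating coefficients is correct in spirit: after $k$ iterations the new exceptional divisor has coefficient $k(\mult_E B_Y-1)+\mult_F B_Y$ for the original vertical component $F$, and this exceeds $1$ for $k$ large.
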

\begin{proof}
By definition, (1) implies (2), (4) implies (3).
By direct computations, if (2) or (3) holds for the given log resolution $Y$, it holds for any log resolution. So 
(2) implies (1), and (3) implies (4).
It is obvious that (3) implies (2). It suffices to show that (1) implies (3).

Suppose that sub-pair $(X/Z\ni z, B)$ is lc. Assume to the contrary that exists a prime divisor $E$ such that $\mult_EB_Y>1$ and $E\cap f^{-1}\pi^{-1}(z)\neq \emptyset.$ Then by successively blowing up the closure of $E\cap f^{-1}\pi^{-1}(z)$ for several times, we can replace $Y$ by a higher model so that there exists a prime divisor $E'$ on $Y$ with $\pi(f(E'))=\overline{z}$ and $\mult_{E'} B_Y>1$ (cf. \cite[Corollary~2.31]{KM98}), a contradiction. 
\end{proof}

\begin{defn}
A \emph{non-klt place} of a sub-pair $(X, B)$ (respectively, $(X/Z\ni z, B)$) is a prime divisor $E$ over $X$ (respectively, $E\in \fD(X/Z\ni z)$) such that $a(E,X,B)\leq 0$, and a \emph{non-klt center} is the center of a non-klt place on $X$.
\end{defn}

\subsection{Log canonical thresholds}
\begin{defn}\label{definition of relative lct}
Let $(X/Z\ni z, B)$ be an lc sub-pair with contraction $\pi: X\to Z$, and let $D\neq 0$ be an effective $\mathbb{R}$-Cartier $\mathbb{R}$-divisor on $X$ such that $z\in \pi(\Supp(D))$. The \emph{log canonical threshold} of $D$ with respect to $(X/Z\ni z,B)$ is $$\lct(X/Z\ni z,B;D):=\sup\{t\in \mathbb{R}\mid (X/Z\ni z,B+tD)\text{ is lc}\}.$$

When $z\in Z$ is a codimension $1$ point, we may assume that $\overline{z}$ is a Cartier divisor on a neighborhood $U$ of $z\in Z$. Then we define $$\lct(X/Z\ni z,B;\pi^*\overline{z}):=\sup\{t\in \mathbb{R}\mid(X/Z\ni z,B+t\pi^*\overline{z})\text{ is lc over } U\},$$
and this definition does not depend on the choice of neighborhoods of $z\in Z$.
\end{defn}

We may write $\lct(X/Z\ni z; D):=\lct(X/Z\ni z, 0; D)$ when $B=0$. When $X=Z$, $z=x$, and $\pi$ is the identity map, 
we may write $\lct(X\ni x,B;D):=\lct(X/Z\ni z,B;D)$.

\begin{rem}Keep the same setting as in Definition~\ref{definition of relative lct}.
 Log canonical thresholds can be computed by a log resolution. In fact, take $g: X'\to X$ to be a log resolution of $(X, B+D)$ and write $K_{X'}+B'=g^*(K_X+B)$. Then 
 $$
 \lct(X/Z\ni z,B;D)=\min_E \frac{1-\mult_E(B')}{\mult_E g^*D}
 $$
 where the minimum runs over all prime divisors $E\subseteq\Supp g^*D$ such that $\pi(g(E))\ni z$ (cf. Lemma~\ref{lem lc near z}(3)). 
\end{rem}
 
\subsection{Canonical bundle formula}\label{sec cbf}

The {\it discrepancy b-divisor} $\mathbf{A}=\mathbf{A}(X,B)$ of a sub-pair $(X,B)$ is the b-divisor of $X$ with the trace $\mathbf{A}_Y$ defined by the formula
$$\mathbf{A}_Y=K_Y-f^*(K_X+B),$$
for any proper birational morphism $f: Y\to X$ between normal varieties.
Similarly, we define $\mathbf{A}^*=\mathbf{A}^*(X,B)$ by
$
\mathbf{A}^*_Y=\sum_{a_i>-1}a_iE_i
$ for any proper birational morphism $f: Y\to X$ between normal varieties,
where $\mathbf{A}_Y=\sum a_iE_i$. 
Note that $\mathbf{A}^*(X,B)=\mathbf{A}(X,B)$ if and only if $(X, B)$ is klt. See \cite[2.3]{FG14} for more details.

\begin{defn}[{\cite[Definition 3.2]{FG14}}]\label{def lctrivial}
 An {\it lc-trivial fibration} $\pi: (X, B)\to Z$ consists of a contraction $\pi: X\to Z$ between normal varieties and a sub-pair $(X, B)$ satisfying the following properties:
 \begin{enumerate}
 \item $(X, B)$ is lc over the generic point of $Z$;
 \item\label{lc-trivial condition 2} $\rank \pi_*\mathcal{O}_X(\lceil\mathbf{A}^*(X, B)\rceil)=1$;
 \item There exists an $\mathbb{R}$-Cartier $\mathbb{R}$-divisor $L$ on $Z$ such that $K_X+B\sim_{\mathbb{R}}\pi^*L.$
 \end{enumerate}
\end{defn}

\begin{rem}\label{rem lc B>0}
Here we discuss more details on condition~\eqref{lc-trivial condition 2}. 
If $B$ is effective on the generic fiber of $\pi$, then
$\mathcal{O}_X(\lceil\mathbf{A}^*(X, B)\rceil)=\mathcal{O}_X$ over the generic point of $Z$, so in this case condition~\eqref{lc-trivial condition 2} holds.
Conversely, if the generic fiber of $\pi$ is a rational curve, then $\rank \pi_*\mathcal{O}_X(\lceil\mathbf{A}^*(X, B)\rceil)=1$ implies that $B$ is effective on the generic fiber of $\pi$.
\end{rem}
 Let $\pi: (X, B)\to Z$ be an lc-trivial fibration. Then we may write $K_X+B\sim_{\Rr} \pi^*L$ for some $\mathbb{R}$-Cartier $\Rr$-divisor $L$. By the work of Kawamata \cite{Kaw97,Kaw98} and Ambro \cite{Amb05}, we have the so-called {\it canonical bundle formula}
$$K_X+B\sim_{\Rr}\pi^{*}(K_Z+B_Z+M_Z),$$
where $B_Z$ is defined by
\begin{align}\label{def BZ}
B_Z:=\sum_P (1-\lct(X/Z\ni \eta_P, B; \pi^*P))P
\end{align}
and 
\begin{align}\label{def MZ}M_Z:=L-K_Z-B_Z.
\end{align}
Here the sum runs over all prime divisors $P$ on $Z$ and $\eta_P$ is the generic point of $P$, and it is known that it is a finite sum. So $B_Z$ is uniquely determined by $(X, B)$ and $M_Z$ is determined up to $\mathbb{R}$-linear equivalences.
Here $B_Z$ is called the \emph{discriminant part} and $M_Z$ is called the \emph{moduli part} of the canonical bundle formula. Recall that if $B$ is effective, then $B_Z$ is also effective.

In the following, we suppose that $B$ is a $\mathbb{Q}$-divisor for simplicity. In fact, 
the {canonical bundle formula} satisfies certain functorial property as follows.
By \cite[Remark~7.7]{PS09} or \cite[3.4]{FG14}, there are b-divisors $\mathbf{B}$ and $\mathbf{M}$ of $Z$ such that
\begin{itemize}
 \item $\mathbf{B}_Z=B_Z$, $\mathbf{M}_Z=M_Z$, and
 \item for any birational contraction $g: Z'\to Z$, let $X'$ be a resolution of the main component of $X\times_{Z} Z'$ with induced morphisms $g': X' \to X$ and $\pi': X'\to Z'$. Let $K_{X'}+B'$ be the {\it crepant pull back} of $K_X+B$, that is, $K_{X'}+B'=g'^*(K_X+B)$, then $\mathbf{B}_{Z'}$ (respectively, $\mathbf{M}_{Z'}$) is the discriminant part (respectively, the moduli part) of the canonical bundle formula of $K_{X'}+B'$ on $Z'$
 defined by~\eqref{def BZ} and~\eqref{def MZ}.
 $$\xymatrix@R=2em{X' \ar[d]_{\pi'} \ar[r]^{g'} & X \ar[d]_{\pi}\\
 Z' \ar[r]_{g} & Z
 }
 $$
\end{itemize}

The effective adjunction conjecture 
(\cite[Conjecture~7.13]{PS09}) predicts that $\mathbf{M}$ is b-semi-ample. 
It was confirmed in the case of relative dimension $1$.

\begin{thm}[{\cite[Theorem~8.1]{PS09}}]
Keep the notation in this subsection.
If $\dim X-\dim Z=1$ and the generic fiber of $\pi$ is a rational curve, then $\mathbf{M}$ is b-semi-ample.
\end{thm}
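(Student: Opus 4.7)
The plan is to reduce the statement to a concrete claim about moduli of weighted pointed rational curves. Since, after a finite base change and passing to a higher birational model, an lc-trivial $\mathbb{P}^1$-fibration becomes a family of weighted pointed stable curves of genus $0$, the moduli part should be (up to $\mathbb{Q}$-linear equivalence) the pullback of a semi-ample line bundle from Hassett's moduli space $\overline{M}_{0,A}$.

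First I would pass to a high enough log resolution of the base $Z$, and a compatible resolution of $X$, so that all relevant data are simple normal crossing; by the functoriality of the canonical bundle formula recalled in Section~\ref{sec cbf}, it suffices to verify b-semi-ampleness on some such model $Z'$. After replacing $(X,B)$ by its crepant transform and subtracting the $\mathbb{R}$-linearly trivial vertical contribution, I may assume the horizontal part $B^h = \sum b_i B_i$ satisfies $\sum b_i (B_i \cdot X_\eta) = 2$ on the generic fiber $X_\eta \cong \mathbb{P}^1$ (this is forced by $K_{X_\eta} + B^h_\eta \sim_{\mathbb{Q}} 0$); by Remark~\ref{rem lc B>0} we moreover have $B_\eta \geq 0$.

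Next I would make a finite surjective base change $Z'' \to Z'$ that kills the monodromy permuting the horizontal components and splits each $B_i$ into disjoint sections of $X''\to Z''$. This reduces the problem to a family whose geometric generic fiber is $(\mathbb{P}^1; p_1,\ldots, p_n)$ with fixed rational weights $b_1,\ldots,b_n$. At this point the family defines a classifying morphism $\phi: Z'' \to \overline{M}_{0,A}$ to Hassett's moduli space with weight vector $A = (b_1,\ldots,b_n)$. A direct computation of the canonical bundle formula for the universal weighted stable family shows that the moduli part $\mathbf{M}_{Z''}$ is $\mathbb{Q}$-linearly equivalent to $\phi^* H$ for a suitable ample $\mathbb{Q}$-divisor $H$ on $\overline{M}_{0,A}$; in particular $\mathbf{M}_{Z''}$ is semi-ample on $Z''$, giving b-semi-ampleness of $\mathbf{M}$ pulled back to $Z''$. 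Finally, a Galois descent argument for the moduli part under the finite cover $Z'' \to Z'$ (together with the usual averaging trick to handle the ramification correction) yields that $\mathbf{M}_{Z'}$ itself is semi-ample, and hence that $\mathbf{M} = \overline{\mathbf{M}_{Z'}}$ is b-semi-ample.

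The hard part is the construction of the classifying morphism $\phi$ and the identification $\mathbf{M}_{Z''} \sim_{\mathbb{Q}} \phi^* H$: one must match the normalization of the moduli part (which lives in $\operatorname{Pic}(Z'')\otimes \mathbb{Q}$ via the canonical bundle formula) with a natural polarization on $\overline{M}_{0,A}$, keeping careful track of the discriminant contributions coming from reducible fibers. The Galois descent step is then a relatively standard, though technical, consequence of the $G$-equivariance of the construction and the fact that $\mathbf{M}$ was \emph{defined} to transform correctly under crepant pullbacks.
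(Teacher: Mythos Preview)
The paper does not prove this theorem at all: it is quoted verbatim from \cite[Theorem~8.1]{PS09} and used as a black box. The only content the paper adds is Remark~\ref{rem PS8.1}, which checks that the two extra hypotheses imposed in \cite{PS09} (effectivity of $B$ over the generic point, and the existence of a klt $\Theta$ with $K_X+\Theta\sim_{\mathbb{Q},Z}0$) are automatic in the present setting. So there is no ``paper's own proof'' to compare your proposal against.

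Your sketch is a plausible outline of how one proves results of this type, and is in the spirit of the arguments in the literature (e.g.\ the original \cite{PS09} argument, and Fujino's subsequent treatments). A few caveats if you intend to actually carry it out: the identification of $\mathbf{M}_{Z''}$ with the pullback of a specific line bundle on a compact moduli space of weighted pointed genus-$0$ curves requires care about which polarization you use and whether it is genuinely semi-ample (not just nef); and the ``Galois descent'' step for the moduli part is more delicate than you indicate, since semi-ampleness does not in general descend along finite covers without additional input. These are exactly the technical points that \cite{PS09} handles, so if your goal is to supply a proof you should consult that source directly rather than re-derive it.
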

\begin{rem}\label{rem PS8.1}
Note that \cite[Theorem~8.1]{PS09} holds for lc-trivial fibration $\pi: (X, B)\to Z$ under two additional assumptions: 
\begin{enumerate}[(i)]
\item $B$ is effective over the generic point of $Z$ \cite[Assumption~7.1]{PS09}, and 
\item there exists a $\mathbb{Q}$-divisor $\Theta$ on $X$ such that $K_X+\Theta\sim_{\mathbb{Q}, Z}0$ and $(X, \Theta)$ is klt over the generic point of $Z$
\cite[Assumption~7.11]{PS09}.
\end{enumerate}
Here (i) is automatically satisfied by Remark~\ref{rem lc B>0}. Also (ii) is automatically satisfied as the following.
Since the generic fiber $X_\eta$ of $\pi$ is a rational curve, we can find an effective $\mathbb{Q}$-divisor $D_\eta$ on 
$X_\eta$ such that $K_{X_\eta}+D_\eta\sim_{\mathbb{Q}}0$ and $(X_\eta, D_\eta)$ is klt.
Denote $D$ to be the closure of $D_\eta$ on $X$, then $K_{X}+D\sim_{\mathbb{Q}} E$ where $E$ is vertical over $Z$. Then we just take $\Theta=D-E$.
\end{rem}

\subsection{Contractions of Fano type}
\begin{defn}[{\cite{PS09}}]\label{defn RelFano}
Let $\pi: X\to Z$ be a contraction between normal varieties, we say that $X$ is \emph{of Fano type} over $Z$ if one of the following equivalent conditions holds:
\begin{enumerate}
 \item there exists a klt pair $(X,B)$ such that $-(K_X+B)$ is ample over $Z$;
 \item there exists a klt pair $(X,B')$ such that $-(K_X+B')$ is nef and big over $Z$;
 \item there exists a klt pair $(X,B'')$ such that $K_X+B''\equiv_Z 0$ and $B''$ is big over $Z$.
\end{enumerate}
When $Z$ is a point, we just say that $X$ is of Fano type.
\end{defn}

\subsection{Formal surface germs} 
Let $P$ be a smooth closed point on a surface $X$, by the Cohen structure theorem, $\widehat{\mathcal{O}}_{X,P}\cong \widehat{\mathcal{O}}_{\mathbb{C}^2,o}=\mathbb{C}[[x,y]]$. Denote by $\widehat {X}_P$ the completion of $X$ along $P$. We will use $\widehat {X}$ instead of $\widehat {X}_P$ if $P$ is clear from the context. 

We call ${C}$ a \emph{Cartier divisor} on $\widehat{X}$ if ${C}$ is defined by $(g=0)$ for some $g\in \widehat{\mathcal{O}}_{X,P}$. We call ${B}$ an \emph{$\mathbb{R}$-divisor} (respectively, a \emph{$\mathbb{Q}$-divisor}) on $\widehat{X}$ if ${B}=\sum_i b_i{B_i}$ for some Cartier divisors ${B_i}$ on $\widehat{X}$ and $b_i\in \mathbb{R}$ (respectively $b_i\in \mathbb{Q}$).

Since the resolution of singularities is known for complete local rings (\cite{Tem08}), the definition of singularities of pairs and log canonical thresholds can be extended to the formal case (see \cite{Kol08} and \cite{dFEM11}).

\begin{defn}\label{A formula calculating analytic lct}
Let $(\widehat{X}\ni P, {B}=\sum_i b_i{B_i})$ be a pair where $P\in X$ is a smooth formal surface germ and ${B_i}$ is defined by $(f_i=0)$ for some $f_i\in \widehat{\mathcal{O}}_{X,x}$. Let ${C}=\sum_i c_i{C_i}\neq 0$ be an effective $\mathbb{R}$-divisor, where ${C_i}$ is defined by $(g_i=0)$ for some $g_i\in \widehat{\mathcal{O}}_{X,x}$. Let ${\phi}: \widehat{Y}\to (\widehat{X}, {B}+{C})$ be a log resolution (\cite{Tem08}), then 
\begin{align}\label{def of formal lct}
 \lct(\widehat{X}\ni P, {B};{C}):=\min_{E}\frac{1+\mult_E K_{\widehat{Y}/\widehat{X}}-\sum_i b_i\mult_E (f_i)}{\sum_i c_i\mult_E (g_i)},
\end{align}
where the minimum runs over all prime divisors $E$ in $\Supp {\phi}^{*}C$ such that $P\in \phi(E)$.
The definition does not depend on the choice of log resolutions.
\end{defn}

\begin{rem}
Let $(X\ni P, B)$ be a germ of lc surface pair such that $P\in X$ is smooth, and let $C$ be an effective $\mathbb{R}$-divisor near $P$. Consider $\widehat{X}$ (respectively $B',C'$), the completion of $X$ (respectively $B,C$) along $P$. Since a log resolution of $(X\ni P, B+C)$ also gives a log resolution of $(\widehat{X},B'+C')$, $\lct(\widehat{X}\ni P, B';C')=\lct(X\ni P, B;C)$. In other words, in order to study the log canonical threshold of a smooth surface germ $(X\ni P,B)$, it is equivalent to study that of the corresponding smooth formal surface germ $(\widehat{X}\ni P, B')$. 
\end{rem}

Recall that log canonical thresholds satisfy convexity with respect to the coefficients.
\begin{lem}[cf. {\cite[Lemma~3.8]{HLQ17}}]\label{Convexity for lct} 
Let $P\in X$ be a smooth surface germ or a smooth formal surface germ.
Let $({X}\ni P,{B_i})$ be an lc pair for $1\leq i\leq m$, ${C}\neq 0$ an effective $\Rr$-divisor on ${X}$, $\lambda_i$ non-negative real numbers such that $\sum_{i=1}^m\lambda_i= 1$. Then 
$$\lct({X}\ni P,\sum_{i=1}^m \lambda_i {B_i};{C})\geq \sum_{i=1}^m \lambda_i \lct({X}\ni P,{B_i};{C}).$$ 
\end{lem}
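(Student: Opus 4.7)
The plan is to exploit the affine linearity of the log-discrepancy in the boundary, combined with the characterization of the log canonical threshold in terms of coefficients on a common log resolution.

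First I would set $t_i := \lct(X \ni P, B_i; C)$ for each $i$, so that by definition the pair $(X \ni P, B_i + t_i C)$ is lc. Note that $K_X + B_i + t_i C$ is $\Rr$-Cartier, hence so is the convex combination $\sum_i \lambda_i(K_X + B_i + t_i C) = K_X + \sum_i \lambda_i B_i + \bigl(\sum_i \lambda_i t_i\bigr) C$, which is what is needed to even speak of the log canonical threshold on the left-hand side.

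Next I would take a common log resolution $\phi: Y \to X$ of the sub-pair $(X, \sum_i B_i + C)$ (using \cite{Tem08} in the formal case), so that a single model computes all the relevant log discrepancies simultaneously. Writing $K_Y + B_{i,Y} = \phi^*(K_X + B_i + t_i C)$, the lc assumption together with Lemma~\ref{lem lc near z} (or the formal analogue in Definition~\ref{A formula calculating analytic lct}) gives $\mult_E B_{i,Y} \leq 1$ for every prime divisor $E \subseteq Y$ with $P \in \phi(E)$ and every $i$. Taking the convex combination and using that pullback and the formation of $B_Y$ are linear in the boundary, one gets
\[
K_Y + \sum_i \lambda_i B_{i,Y} = \phi^*\!\Bigl(K_X + \sum_i \lambda_i B_i + \bigl(\sum_i \lambda_i t_i\bigr) C\Bigr),
\]
and
\[
\mult_E \Bigl(\sum_i \lambda_i B_{i,Y}\Bigr) = \sum_i \lambda_i \mult_E B_{i,Y} \leq \sum_i \lambda_i = 1
\]
for every such $E$. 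By Lemma~\ref{lem lc near z} again, this means $\bigl(X \ni P,\ \sum_i \lambda_i B_i + (\sum_i \lambda_i t_i) C\bigr)$ is lc, so $\sum_i \lambda_i t_i$ is an admissible value in the supremum defining $\lct(X \ni P, \sum_i \lambda_i B_i; C)$, yielding the claimed inequality.

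There is essentially no obstacle: the argument is purely formal and uses only (i) linearity of pullback, (ii) the coefficient criterion for being lc on a log resolution, and (iii) existence of a common log resolution, which is available both in the algebraic and formal settings. The only minor point to be careful about is that $C$ is an $\Rr$-divisor rather than $\Qq$-Cartier individually, but this plays no role since the entire argument only concerns the $\Rr$-Cartier divisors $K_X + B_i + t_i C$, which exist by assumption.
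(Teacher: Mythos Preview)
Your argument is correct and is the standard proof of this convexity: take a common log resolution, use linearity of pullback in the boundary, and observe that the coefficient bound $\leq 1$ is preserved under convex combinations. The paper does not supply its own proof of this lemma, it merely cites \cite[Lemma~3.8]{HLQ17}; your write-up is essentially what that reference does, so there is nothing to compare.
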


\section{Log canonical thresholds on a smooth surface germ}\label{sec lct}

In this section, we study the lower bounds of log canonical thresholds on a smooth surface germ. The main goal of this section is to prove Theorem~\ref{Smooth local case general beta version}.

Recall the following result on computing log canonical thresholds of hypersurfaces.
\begin{prop}[{\cite[Proposition~2.1]{Kuw99}}]\label{computation lct fw}
Let ${B}$ be a Cartier divisor in a neighborhood of $o\in \widehat{\mathbb{C}^n}$ defined by $(f=0)$, where $f\in \mathbb{C}[[x_1, \dots, x_n]]$. Assign rational weights $w(x_i)$ to the variables and let $w(f)$ be the weighted multiplicity of $f$. Let $f_{w}$ denote the weighted homogeneous leading term of $f$. Take $b=\frac{\sum_{i=1}^n w(x_i)}{w(f)}$. If $(\widehat{\mathbb{C}^n}, b\cdot (f_w=0))$ is lc outside $o$, then $\lct(\widehat{\mathbb{C}^n}\ni o; B)=b$.
\end{prop}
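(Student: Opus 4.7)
The plan is a direct weighted blow-up computation, the standard technique for such explicit lct formulas. Let $\pi : Y \to \widehat{\mathbb{C}^n}$ be the weighted blow-up of $o$ with weights $(w(x_1),\dots,w(x_n))$; its exceptional divisor $E$ is the weighted projective space $\mathbb{P}(w(x_1),\dots,w(x_n))$, and $Y$ has only cyclic quotient singularities at the toric fixed points of $E$. Standard toric formulas give
\[
K_Y = \pi^{*}K_{\widehat{\mathbb{C}^n}} + \Bigl(\sum_{i=1}^{n} w(x_i) - 1\Bigr) E,
\qquad \pi^{*}B = \widetilde{B} + w(f)\, E,
\]
where $\widetilde{B}$ is the strict transform of $B$. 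Substituting $b = \sum_i w(x_i)/w(f)$ immediately yields
\[
\pi^{*}(K_{\widehat{\mathbb{C}^n}} + b B) \;=\; K_Y + b\widetilde{B} + E,
\]
so $a(E, \widehat{\mathbb{C}^n}, bB) = 0$ and the upper bound $\lct(\widehat{\mathbb{C}^n}\ni o; B) \le b$ is immediate.

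For the matching lower bound, since every divisorial valuation centered at $o$ has center contained in $E$, it suffices to show that $(Y, b\widetilde{B} + E)$ is lc at every point of $E$. I would verify this chart by chart: in the affine chart $U_j \cong \mathbb{C}^n / \bmu_{w(x_j)}$ with coordinates $(y_1,\dots,y_n)$ determined by $x_i = y_i\, y_j^{w(x_i)/w(x_j)}$ for $i \ne j$ and $x_j = y_j^{w(x_j)}$, one has $f = y_j^{w(f)}\, \widetilde{f}(y_1,\dots,y_n)$ with the key identity
\[
\widetilde{f}(y_1,\dots,y_{j-1},0,y_{j+1},\dots,y_n) \;=\; f_w(y_1,\dots,y_{j-1},1,y_{j+1},\dots,y_n).
\]
Consequently $\widetilde{B}\cap E$ is exactly the projectivization of $(f_w=0)$ inside the weighted projective space $E$, i.e.\ the $\mathbb{G}_m$-quotient of $(f_w=0)\setminus\{o\}$ under the weighted scaling action.

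Under the hypothesis that $(\widehat{\mathbb{C}^n}, b(f_w=0))$ is lc outside $o$, the $\mathbb{G}_m$-equivariance of $f_w$, together with the standard fact that lc-ness is preserved under étale-local $\mathbb{G}_m$-quotients and under the compatible cyclic covers at the toric fixed points, transfers the hypothesis into lc-ness of the induced log pair on $E$. Since $E$ appears with coefficient $1$ in $K_Y + b\widetilde{B} + E$, inversion of adjunction along $E$ upgrades this to lc-ness of $(Y, b\widetilde{B} + E)$ at every point of $E$. Combined with the discrepancy calculation above, this gives $\lct(\widehat{\mathbb{C}^n}\ni o; B) = b$. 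The step that most deserves care is the bookkeeping at the toric fixed points of $E$, where $Y$ has cyclic quotient singularities and the different $\Diff_E 0$ contributes non-trivially; this is handled by the standard toric dictionary and amounts to a routine verification rather than a genuine obstacle.
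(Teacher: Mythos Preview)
The paper does not supply its own proof of this proposition; it is simply recalled from \cite[Proposition~2.1]{Kuw99} and used as a black box. Your weighted blow-up argument is the standard proof and is correct. The discrepancy calculation for the exceptional divisor $E$ gives the upper bound $\lct\le b$ immediately, and inversion of adjunction along $E$ is the right mechanism for the matching lower bound.

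The one place where your sketch is compressed---transferring the hypothesis that $(\widehat{\mathbb{C}^n},b(f_w=0))$ is lc outside $o$ to lc-ness of $(E,\Diff_E(b\widetilde{B}))$---works exactly as you indicate: weighted homogeneity of $f_w$ makes $(\widehat{\mathbb{C}^n}\setminus\{o\},b(f_w=0))$ locally a product along $\mathbb{G}_m$-orbits, so in the chart $U_j$ the restriction $\widetilde{B}|_E$ is cut out by $f_w(y_1,\dots,1,\dots,y_n)$, which is a transverse slice of the hypothesis and hence lc. The cyclic-quotient bookkeeping at the toric fixed points is, as you say, routine after passing to the $\bmu_{w_j}$-cover, where the ambient space is smooth and the different is trivial. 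For comparison, the paper carries out this exact style of argument (weighted blow-up, restriction to $E$, then inversion of adjunction via \cite[Theorem~5.50]{KM98}) in the surface case in Lemma~\ref{the criterion lemma for attaining lct} of Appendix~\ref{appendix: B second proof}, so your approach is fully in line with the techniques the paper itself uses.
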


To warm up, the following proposition is an application of Proposition~\ref{computation lct fw}.
\begin{prop}\label{prop x(x+y)}
Let ${B}$ be a Cartier divisor in a neighborhood of $o\in \widehat{\mathbb{C}^2}$ defined by $(f=0)$, where $f=x^{n } (x^{m_1}+y^{m_2})^k$ for some positive integers $k$, $n$, $m_1$, $m_2$. 
Then
$$\lct(\widehat{\mathbb{C}^2}\ni o; B)=\min\left\{
\frac{m_1+m_2}{km_1m_2+ {n }{m_2}}, \frac{1}{n }, \frac{1}{k}
\right\}.$$
 \end{prop}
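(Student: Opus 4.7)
The plan is to pin down $\lct := \lct(\widehat{\mathbb{C}^2}\ni o;\,B)$ as $t_0 := \min\{b,\,1/n,\,1/k\}$ with $b = \frac{m_1+m_2}{km_1 m_2 + nm_2}$. The upper bound $\lct \le t_0$ is witnessed by three explicit divisorial valuations: (i) the exceptional divisor $E$ of the weighted blow-up with weights $w(x) = m_2$, $w(y) = m_1$, whose log discrepancy over $\widehat{\mathbb{C}^2}$ is $m_1+m_2$ and whose pullback multiplicity along $B$ is $nm_2 + km_1 m_2$, yielding $\lct \le b$; (ii) the component $L := \{x = 0\}$ of $B$, which appears with coefficient $n$, yielding $\lct \le 1/n$; and (iii) any irreducible component of $C := \{x^{m_1} + y^{m_2} = 0\}$, each of which appears in $B$ with coefficient $k$, yielding $\lct \le 1/k$.

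For the matching lower bound, I would show that $(\widehat{\mathbb{C}^2},\,t_0 B)$ is lc at $o$ by case analysis on which of $b,\,1/n,\,1/k$ realises $t_0$. The central case is $t_0 = b$, equivalent to $bn \le 1$ and $bk \le 1$. Here I apply Proposition~\ref{computation lct fw} with the above weighting: the polynomial $f$ is already weighted homogeneous so $f_w = f$, and outside $o$ the divisor $B$ is supported on a pairwise disjoint union of smooth curves --- indeed $L\cap C = \{o\}$ by direct computation, the singular locus of $x^{m_1}+y^{m_2}$ is contained in $\{o\}$ (its partial derivatives vanish simultaneously only at the origin), and when $\gcd(m_1,m_2)>1$ the irreducible components of $C$ also meet only at $o$. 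Hence $(\widehat{\mathbb{C}^2},\,bB)$ is lc outside $o$, and Proposition~\ref{computation lct fw} gives $\lct = b$.

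When $t_0 = 1/n$, the inequality $1/n \le b$ unwinds to $n \ge km_2$ (so a fortiori $n \ge k$), and I would apply inversion of adjunction along the smooth divisor $L$ to the pair $(\widehat{\mathbb{C}^2},\,L + (k/n)C)$: this reduces the problem to showing $(L,\,(k/n)\,C|_L)$ is lc at $o$, and since $C|_L$ is cut out by $y^{m_2} = 0$ we have $(C \cdot L)_o = m_2$, so the resulting coefficient $km_2/n$ is $\le 1$ by the case hypothesis. In the remaining case $t_0 = 1/k$, the inequality $1/k \le b$ rearranges to $nm_2 \le k(m_1 + m_2 - m_1 m_2)$, which forces $m_1 + m_2 > m_1 m_2$, and thus $m_1 = 1$ or $m_2 = 1$; in either alternative $C$ is already a smooth curve. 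Inversion of adjunction along $C$ applied to $(\widehat{\mathbb{C}^2},\,(n/k)L + C)$ then reduces matters to $(n/k)(L \cdot C)_o \le 1$; a direct calculation gives $(L \cdot C)_o = m_2$ when $m_1 = 1$ and $(L \cdot C)_o = 1$ when $m_2 = 1$, and in both sub-cases the inequality follows from the case hypothesis.

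The only real subtlety is the automatic smoothness of $C$ in the $t_0 = 1/k$ case, which makes inversion of adjunction go through without having to compute the different of a cuspidal curve; the rest reduces to routine multiplicity bookkeeping.
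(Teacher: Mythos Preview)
Your proof is correct and follows essentially the same route as the paper: the same weighted blow-up with weight $(m_2,m_1)$ to invoke Proposition~\ref{computation lct fw} in the case $t_0=b$, and the same reduction via adjunction along the smooth component (the paper phrases this as \cite[Corollary~5.57]{KM98}) in the cases $t_0=1/n$ and $t_0=1/k$, including the observation that $1/k\le b$ forces $m_1=1$ or $m_2=1$ so that $C$ is smooth. You are somewhat more explicit than the paper about the upper bound and about why $bB$ is lc outside $o$, but the arguments coincide.
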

\begin{proof}
Consider $C_1$ defined by $(x=0)$ and $C_2$ defined by $(x^{m_1}+y^{m_2}=0)$, then $(C_1\cdot C_2)_{o}=m_2.$ 
 Consider the weight $w=(m_2, m_1)$, then $f_w=f$ and $b=\frac{m_1+m_2}{km_1m_2+ {n }{m_2}}$ as in Proposition~\ref{computation lct fw}. 

If $b\leq \min\{ \frac{1}{n}, \frac{1}{k}\}$, then $(\widehat{\mathbb{C}^2}, b\cdot (f_w=0))$ is lc outside $o$, and hence $\lct(\widehat{\mathbb{C}^2}\ni o; B)=b$ by Proposition~\ref{computation lct fw}.
If $b> \frac{1}{n}$, then $n>km_2$. Then \cite[Corollary~5.57]{KM98} implies that $(\widehat{\mathbb{C}^2}\ni o, C_1+\frac{k}{n}C_2)$ is lc. 
If $b> \frac{1}{k}$, then either $m_1=1$ or $m_2=1$. In either case, $C_2$ is smooth and $k>nm_2$. Then \cite[Corollary~5.57]{KM98} implies that $(\widehat{\mathbb{C}^2}\ni o, \frac{n}{k}C_1+C_2)$ is lc. \end{proof}

\begin{defn}[cf. {\cite[Definition~2.10]{Kuw99}}]
Let $B=(f=0)$ be an irreducible curve in a neighborhood of $o\in \widehat{\mathbb{C}^2}$. If $B$ is smooth, then we set $m=1$ and $n=\infty$. Otherwise, the Puiseux expansion of $B$ (under suitable local parameters $x, y$) is expressed as $x=t^m, y=\sum_{i=n}^\infty \alpha_i t^i$ for some local parameter $t$, where $m,n\in\Zz_{\ge 2}$, $m<n$, and $m$ does not divide $n$. Here $(m, n)$ is called the \emph{first pair of Puiseux exponents of $f$}. Note that $m=\mult_o f$ is the multiplicity of $f$ at $o\in \widehat{\mathbb{C}^2}$. 
\end{defn} 

\begin{ex}
If $n>m>1$ and $m,n$ are coprime, then the first pair of Puiseux exponents of $f=x^m+y^n$ is just $(m,n)$. 
\end{ex}

The close relation between the first pair of Puiseux exponents and log canonical thresholds can be illustrated by the following result.
 \begin{thm}[{\cite[Theorem~1.3]{Kuw99}}]\label{dep on P}Let ${B}$ be a Cartier divisor in a neighborhood of $o\in \widehat{\mathbb{C}^2}$ defined by $(f=0)$, where $f\in \mathbb{C}[[x, y]]$. 
Write $f=\prod_{j=1}^r f_j^{\alpha_j}$ where $f_j$ is irreducible. Write $B=\sum_j \alpha_j B_j$ where $B_j$ is defined by $(f_j=0)$. Then 
$\lct(\widehat{\mathbb{C}^2}\ni o; B)$ depends only on the first pairs of Puiseux exponents of $f_j$, $(B_i\cdot B_j)_o$, and $\alpha_j$.
\end{thm}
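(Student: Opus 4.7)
The plan is to prove Theorem~\ref{dep on P} by constructing an embedded resolution of $\Supp B$ via weighted blowups adapted to the first Puiseux pairs of the branches, and checking that every quantity entering the log canonical threshold formula is determined by the listed invariants. First, I would dispose of the case of a single irreducible branch: if $f=f_1$ is irreducible with first Puiseux pair $(m,n)$, then in suitable Puiseux coordinates the weighted leading term with respect to $w(x)=m$, $w(y)=n$ is $f_w=y^m-\lambda x^n$ for some $\lambda\in\mathbb{C}^\times$, and $(f_w=0)$ is smooth away from the origin because $\gcd(m,n)=1$. Proposition~\ref{computation lct fw} then yields $\lct(\widehat{\mathbb{C}^2}\ni o;B_1)=(m+n)/(mn)$, visibly depending only on $(m,n)$, which recovers Igusa's formula and settles the single-branch case.

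For the reducible case, I would perform, for each irreducible branch $B_j$, a weighted blowup $\phi_j$ with weights dictated by $(m_j,n_j)$ in coordinates adapted to $B_j$. The exceptional divisor $E_j$ has log discrepancy $a(E_j,\widehat{\mathbb{C}^2},0)=m_j+n_j$, and the multiplicity of the total transform along $E_j$ decomposes as
\[
\mult_{E_j}\phi^*B=\alpha_j m_j n_j+\sum_{i\neq j}\alpha_i\,\mult_{E_j}\phi^*B_i,
\]
where each $\mult_{E_j}\phi^*B_i$ is computable from $(B_i\cdot B_j)_o$ together with $(m_j,n_j)$ and $(m_i,n_i)$: only the initial jet of $f_i$ with respect to the weight $w$ contributes to this multiplicity, and that jet is encoded by how $B_i$ meets $B_j$. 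Consequently the candidate ratio $(1+\mult_{E_j}K_{\widehat Y/\widehat X})/\mult_{E_j}\phi^*B$ is a function of exactly the data listed in the theorem.

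The main obstacle is to show that the minimum in
\[
\lct(\widehat{\mathbb{C}^2}\ni o;B)=\min_E\frac{1+\mult_E K_{\widehat{Y}/\widehat{X}}}{\mult_E\phi^*B}
\]
is attained at one of the $E_j$ from this first round of weighted blowups, so that the higher Puiseux pairs of the $f_j$ are irrelevant. To handle this I would argue inductively along the resolution tree: after $\phi_j$, each strict transform meets $E_j$ at a point where it is either smooth or carries a plane curve singularity of strictly lower complexity, and the single-branch estimate provides a lower bound showing that any subsequent weighted blowup produces a ratio no smaller than $(m_j+n_j)/\mult_{E_j}\phi^*B$. Iterating this comparison across all further blowups forces the minimum to be realized among the $E_j$, at which point the formula for $\lct(\widehat{\mathbb{C}^2}\ni o;B)$ depends only on the first Puiseux pairs of the $f_j$, the coefficients $\alpha_j$, and the intersection multiplicities $(B_i\cdot B_j)_o$, as claimed.
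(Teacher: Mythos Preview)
The paper does not prove this statement; it is quoted as \cite[Theorem~1.3]{Kuw99} and used as a black box. So there is no in-paper proof to compare against, and your proposal must stand on its own.

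Your outline has a genuine gap at the crucial step. The claim that the minimum in the log canonical threshold formula is always realized at one of the first-round weighted blowup divisors $E_j$ is the entire content of the theorem, and your justification does not hold up. The inequality you want---that any divisor $E'$ appearing above $E_j$ satisfies $\frac{1+\mult_{E'}K}{\mult_{E'}\phi^*B}\ge \frac{m_j+n_j}{\mult_{E_j}\phi^*B}$---cannot be deduced from ``the single-branch estimate,'' because $\mult_{E'}\phi^*B$ involves the multiplicities of \emph{all} branches along $E'$, and those multiplicities depend on how the other branches meet the infinitely near points lying over $E_j$, not just on branch $j$ alone. When several branches share the same tangent direction or the same first Puiseux pair (so they have high contact), the divisor computing the lct can sit strictly deeper in the resolution tree than any of your $E_j$; what must be shown is that the ratio at that deeper divisor is still a function of the listed invariants, which is a different statement from ``the minimum is attained at level one.'' Your proposal also leaves smooth branches ($n_j=\infty$) without a well-defined $E_j$, and the assertion that $\mult_{E_j}\phi^*B_i$ is determined by $(B_i\cdot B_j)_o$ and the first pairs is plausible but unproved.

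A smaller technical point: the first Puiseux pair $(m,n)$ of an irreducible branch need not satisfy $\gcd(m,n)=1$ (only $m\nmid n$ is guaranteed), so $f_w=y^m-\lambda x^n$ can be reducible away from the origin and Proposition~\ref{computation lct fw} does not apply without further argument, even in the single-branch case. Kuwata's actual proof proceeds differently: rather than locating the minimizing divisor, he shows directly that replacing each branch by a model curve with the same first pair and the same mutual intersection numbers leaves the lct unchanged, by tracking the full resolution data. If you want to pursue your strategy, you would need to prove a much sharper statement about where on the resolution tree the minimum can occur, and that is essentially equivalent in difficulty to the theorem itself.
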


Following the ideas in \cite[Theorem~1.2]{Kuw99}, we have the following.
\begin{prop}\label{Prop tB+sC lct}Let ${B}$ be a Cartier divisor in a neighborhood of $o\in \widehat{\mathbb{C}^2}$ defined by $(f=0)$, where $f\in \mathbb{C}[[x, y]]$. 
Suppose that $f$ is irreducible. Let $\mult_o f=m$ and let $(m, n)$ be the first pair of Puiseux exponents of $f$. Let $C\neq B$ be a smooth curve passing $o$ and $(B\cdot C)_o=I.$
Then for positive real number $s,t$, 
$$\lct(\widehat{\mathbb{C}^2}\ni o; sB+tC)=\min\left\{\frac{m+n}{smn+tI}, \frac{m+I}{(sm+t)I}, \frac{1}{s}, \frac{1}{t}\right\}.$$
\end{prop}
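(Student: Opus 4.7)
The plan is to reduce to a concrete model via Theorem~\ref{dep on P} and compute via weighted blowups, following the strategy underlying Proposition~\ref{prop x(x+y)}.

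By Theorem~\ref{dep on P}, $\lct(\widehat{\mathbb C^2}\ni o;\ sB+tC)$ depends only on $(m,n,I,s,t)$, so I may replace $(B,C)$ by any convenient model with these invariants. Writing the Puiseux parametrization $x=u^m,\ y=\alpha_n u^n+\cdots$ of $B$ (with $\alpha_n\ne 0$) and a smooth $C$ in the form $y=h(x)$, a direct computation of $\mathrm{mult}_u(\alpha_n u^n+\cdots-h(u^m))$ shows that the realizable values are $I\in\{m\}\cup\{km:2\le k,\ km<n\}\cup\{n\}$, corresponding respectively to $C$ transverse to $B$, tangent to order $k$, or fully tangent. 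For each value I take $B_0=(y^m+x^n=0)$ (with a higher-order irreducible perturbation if $\gcd(m,n)>1$) together with $C_0=(x=0)$, $(y-x^k=0)$, or $(y=0)$ respectively.

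For the extreme cases $I\in\{m,n\}$, $sB_0+tC_0$ fits the form treated by Proposition~\ref{prop x(x+y)}, which gives $\lct=\min\{\tfrac{m+n}{smn+tI},\ 1/s,\ 1/t\}$. The remaining candidate $\tfrac{m+I}{(sm+t)I}$ of the claimed formula either reduces to $\tfrac{2}{sm+t}$ when $I=m$ and is dominated by the other three (the comparison $\tfrac{2}{sm+t}\ge\tfrac{m+n}{smn+tm}$ reducing cleanly to $sm\ge t$, and the complementary case $sm<t$ handled via $\tfrac{2}{sm+t}\ge 1/s$, valid for $m\ge 2$), or coincides algebraically with $\tfrac{m+n}{smn+tI}$ when $I=n$; in either case the formula holds. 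For the intermediate case $I=km$ with $2\le k$ and $km<n$, I would perform the coordinate change $z=y-x^k$, so that $C_0=(z=0)$ and $f_{B_0}=(z+x^k)^m+x^n$, and then apply Proposition~\ref{computation lct fw} with two different weight systems. With weights $(w(x),w(z))=(1,k)$, the condition $km<n$ forces the weighted leading part of $f_{B_0}^{s}z^{t}$ to be $(z+x^k)^{ms}z^{t}$ of weighted degree $k(sm+t)$, producing the candidate $\tfrac{1+k}{k(sm+t)}=\tfrac{m+I}{(sm+t)I}$; its lc-outside-$o$ hypothesis translates to $sm\le It$ and $t\le Is$. With weights $(w(x),w(y))=(m,n)$ in the original coordinates one finds $w(f_{B_0})=mn$ and $w(f_{C_0})=\min(n,km)=km=I$, yielding the candidate $\tfrac{m+n}{smn+tI}$, whose hypothesis reduces to $It\le sm$ (the companion inequality being automatic for $m,n\ge 2$, since $m+n-mn\le 0$). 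Together with the trivial bounds $\lct\le 1/s$ and $\lct\le 1/t$ from generic points of $B_0$ and $C_0$, these produce four upper bounds whose minimum is the claimed value.

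The main obstacle will be the case-by-case verification that, for every choice of $(s,t)$, whichever of the four candidates is the smallest is actually attained, either by the appropriate Proposition~\ref{computation lct fw} application (whose hypothesis holds in precisely that regime) or directly by the generic-point bound. The algebraic backbone of this check is the identity $T_1\le T_2\iff (n-I)(tI-sm^2)\ge 0$ (valid for $I<n$), together with similar cross-comparisons among $T_1=\tfrac{m+n}{smn+tI}$, $T_2=\tfrac{m+I}{(sm+t)I}$, $1/s$ and $1/t$; these partition the $(s,t)$-plane into four regions on which the respective candidates achieve the minimum, and in each region the corresponding Proposition~\ref{computation lct fw} hypothesis is satisfied.
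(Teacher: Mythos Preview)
Your strategy is essentially the paper's: invoke Theorem~\ref{dep on P} to pass to an explicit model, then compute via Proposition~\ref{computation lct fw} using two weight systems which together cover the $(s,t)$-plane. The paper works directly with $h=(x^{m}+y^{n})^{s}(x+y^{p})^{t}$ and weights $(n,m)$ and $(p,1)$, while you first substitute $z=y-x^{k}$; the computations match up after swapping coordinates.

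A few small but genuine gaps. First, Proposition~\ref{computation lct fw} applies to an honest power series, so $f_{B_0}^{s}z^{t}$ only makes sense once $s,t$ are integers; the paper handles this by first passing to rational $s,t$ by continuity and then clearing denominators. Second, you do not treat $m=1$ (where $n=\infty$ and your model $y^{m}+x^{n}$ is unavailable); the paper dispatches this case separately via Proposition~\ref{prop x(x+y)}. Third, a couple of your stated inequalities are off by a factor of $m$: the lc-outside-$o$ hypothesis for weight $(1,k)$ is $sm^{2}\le It$ (equivalently $ms\le kt$), not $sm\le It$, and for weight $(m,n)$ it is $It\le sm^{2}$; your later identity $T_{1}\le T_{2}\iff (n-I)(tI-sm^{2})\ge 0$ is correct and shows you have the right dividing line. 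Finally, in the $I=m$ paragraph, the complementary case $sm<t$ gives $\tfrac{2}{sm+t}>\tfrac{1}{t}$, not $\tfrac{1}{s}$ (the inequality $\tfrac{2}{sm+t}\ge\tfrac{1}{s}$ is false for $m\ge 2$). None of these affects the architecture of the argument.
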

\begin{rem}\label{remark I=mp}
\begin{enumerate}
 \item By convention, if $(m,n)=(1,\infty)$, we set $\frac{1+\infty}{s\cdot \infty+tI}:=\frac{1}{s}$. 
 \item In the case that $s=t=1$, Proposition~\ref{Prop tB+sC lct} is a special case of \cite[Theorem~1.2]{Kuw99}. We also remark that Proposition~\ref{Prop tB+sC lct} might be indicated by more general results in \cite{GHM16}, but the formulation there is complicated and we give a simple proof in this special case for the reader's convenience.
 \item Recall that under the setting of Proposition~\ref{Prop tB+sC lct}, by \cite[Proof of Theorem~1.2, Case~2, Page 711--712]{Kuw99}, 
$$I\in \left\{m, 2m, \dots, \left\lfloor{\frac{n}{m}}\right\rfloor m, n\right\}.$$
\end{enumerate}
\end{rem}

\begin{proof}

Denote $$
c:=\min\left\{\frac{m+n}{smn+tI}, \frac{m+I}{(sm+t)I}, \frac{1}{s}, \frac{1}{t}\right\}.
$$
As being lc is a closed condition on coefficients, we may assume that $s, t\in \mathbb{Q}$.
Possibly replacing $s, t$ by a multiple, we may assume that $s, t$ are integers. 

If $m=1$, then by Theorem~\ref{dep on P}, we may assume that $sB+tC$ is defined by $(x^s (x +y^I)^t=0)$. Then the proposition follows from Proposition~\ref{prop x(x+y)}. In the following we may assume that $m>1$, and in particular, $B$ is singular at $o$. 

Suppose that $\frac{1}{s}\leq \frac{m+n}{smn+tI}$, then 
we have $m=1$ (recall that $n>1$), which is absurd. 
 
Suppose that $\frac{1}{t}\leq \frac{m+I}{(sm+t)I}$, then $sI\leq t$. Then \cite[Corollary~5.57]{KM98} implies that $(\widehat{\mathbb{C}^2}\ni o, \frac{s}{t}B+C)$ is lc. Since $n\ge I\ge m$, we have $\frac{m+n}{smn+tI}\ge \frac{1}{t}$, and hence $\frac{1}{t}=c$.

So from now on we may assume that
\begin{align}
 \frac{1}{s}> \frac{m+n}{smn+tI}\ \ \ \text{and}\ \ \ \frac{1}{t}> \frac{m+I}{(sm+t)I}, \label{1/s 1/t}
\end{align} in particular,
$$
c= \min\left\{\frac{m+n}{smn+tI}, \frac{m+I}{(sm+t)I}\right\}.
$$

If $I=n$, then by Theorem~\ref{dep on P}, we may assume that $sB+tC$ is defined by $((x^{m}+y^n)^s x^t=0)$. Then by Proposition~\ref{prop x(x+y)},
$$\lct(\widehat{\mathbb{C}^2}\ni o; sB+tC)=\min\left\{\frac{m+n}{smn+tn}, \frac{1}{s}, \frac{1}{t}\right\}=c.$$

If $I=pm$ for some $1\leq p\leq \lfloor{\frac{n}{m}}\rfloor$, then by Theorem~\ref{dep on P}, we may assume that $sB+tC$ is defined by $(h=0)$, where $h=(x^{m}+y^n)^s (x+y^p)^t$. 

If $tp\leq sm$, consider the weight $w=(n, m)$, then $h_w=y^{pt}(x^m+y^n)^s$ and $b=\frac{m+n}{smn+tI}$ as defined in Proposition~\ref{computation lct fw}.
Moreover, $(\widehat{\mathbb{C}^2},bh_w)$ is lc outside $o$ as $b\leq \frac{1}{pt}$ by $tp\leq sm$ and $b< \frac{1}{s}$ by~\eqref{1/s 1/t}. Hence by Proposition~\ref{computation lct fw},
$$\lct(\widehat{\mathbb{C}^2}\ni o; sB+tC)=\frac{m+n}{smn+tI}=c.$$

If $tp> sm$, consider the weight $w'=(p, 1)$, then $h_{w'}=x^{ms}(x+y^{p})^t$ and $b'=\frac{1+p}{(sm+t)p}=\frac{m+I}{(sm+t)I}$ as defined in Proposition~\ref{computation lct fw}.
Moreover, $(\widehat{\mathbb{C}^2},b'h_{w'})$ is lc outside $o$ as $b'< \frac{1}{ms}$ by $tp> sm$ and $b'<\frac{1}{t}$ by~\eqref{1/s 1/t}. 
Hence by Proposition~\ref{computation lct fw},
$$\lct(\widehat{\mathbb{C}^2}\ni o; sB+tC)=\frac{m+I}{(sm+t)I}=c.$$
\end{proof}

\begin{cor}\label{cor lct B C 1}
Let ${B}$ be a Cartier divisor in a neighborhood of $o\in \widehat{\mathbb{C}^2}$ defined by $(f=0)$, where $f\in \mathbb{C}[[x, y]]$. 
Suppose that $f$ is irreducible, $\mult_o f=m$ and let $(m, n)$ be the first pair of Puiseux exponents of $f$. Let $C\neq B$ be a smooth curve passing $o$, and $(B\cdot C)_o=I.$
Let $\lambda$ be a positive real number. Suppose that one of the following condition holds: 
\begin{enumerate}[(a)]
 \item $\lambda m \leq 1$; 
 \item $n=I$ and $\lambda\leq \min \{1, \frac{1}{m}+\frac{1}{I}\}$; or 
 \item $I\neq m$ and $\lambda I\leq 2$.
\end{enumerate}
Then $(\widehat{\mathbb{C}^2}\ni o, \lambda B)$ is lc and
$$\lct(\widehat{\mathbb{C}^2}\ni o, \lambda B; C)\geq \min\left\{1, 1+\frac{m}{I}-\lambda m\right\}.$$
\end{cor}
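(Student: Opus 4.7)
The plan is to apply Proposition~\ref{Prop tB+sC lct} with $s=\lambda$ to the two-parameter divisor $\lambda B+tC$, and then extract $\lct(\widehat{\mathbb{C}^2}\ni o,\lambda B;C)$ as the largest admissible $t$. Concretely, for $\lambda,t>0$ Proposition~\ref{Prop tB+sC lct} gives
\[
\lct(\widehat{\mathbb{C}^2}\ni o;\lambda B+tC)=\min\Bigl\{\tfrac{m+n}{\lambda mn+tI},\;\tfrac{m+I}{(\lambda m+t)I},\;\tfrac{1}{\lambda},\;\tfrac{1}{t}\Bigr\},
\]
subject to the convention of Remark~\ref{remark I=mp}(1) when $m=1$. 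Since $(\widehat{\mathbb{C}^2},\lambda B+tC)$ is lc exactly when this minimum is $\ge 1$, the lc locus in the $(\lambda,t)$-plane is cut out by the four linear inequalities
\[
tI\le m+n-\lambda mn,\quad t\le 1+\tfrac{m}{I}-\lambda m,\quad \lambda\le 1,\quad t\le 1.
\]

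Next I would confirm the lc-ness of $(\widehat{\mathbb{C}^2}\ni o,\lambda B)$ under each hypothesis, which in particular forces $\lambda\le 1$ and renders the third constraint redundant. Using the classical identity $\lct(\widehat{\mathbb{C}^2};B)=\tfrac{1}{m}+\tfrac{1}{n}$ (which also falls out of Proposition~\ref{Prop tB+sC lct} by letting $t\to 0^+$), this reduces to $\lambda\le\tfrac{1}{m}+\tfrac{1}{n}$: in (a) it follows from $\lambda\le\tfrac{1}{m}$; in (b) it is exactly the hypothesis combined with $n=I$; and in (c) it emerges from the dichotomy below. Maximizing $t$ over the remaining three constraints then yields
\[
\lct(\widehat{\mathbb{C}^2}\ni o,\lambda B;C)=\min\Bigl\{\tfrac{m+n-\lambda mn}{I},\;1+\tfrac{m}{I}-\lambda m,\;1\Bigr\}.
\]

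Accordingly, the claimed lower bound $\min\{1,\,1+\tfrac{m}{I}-\lambda m\}$ will follow once one verifies
\[
\tfrac{m+n-\lambda mn}{I}-\Bigl(1+\tfrac{m}{I}-\lambda m\Bigr)=\tfrac{(n-I)(1-\lambda m)}{I}\ge 0,
\]
since then the extraneous first entry of the three-term minimum is dominated by the second, and the three-term minimum collapses to the desired two-term one. By Remark~\ref{remark I=mp}(3) one has $n\ge I$, so it suffices to arrange $\lambda m\le 1$ or $n=I$. Case (a) supplies the former directly; case (b) supplies the latter; and in case (c), Remark~\ref{remark I=mp}(3) partitions the admissible values of $I$ into $I\ge 2m$ (yielding $\lambda m\le\lambda I/2\le 1$, hence (a)) or $I=n$ (hence (b)).

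I expect the only real technical point to be this dichotomy in case (c): one must invoke the discrete list of possible intersection multiplicities from Remark~\ref{remark I=mp}(3), itself extracted from Kuwata's analysis, to partition the hypothesis $\lambda I\le 2$ into the two regimes above. Everything else is a direct specialization of Proposition~\ref{Prop tB+sC lct} together with the elementary identity displayed.
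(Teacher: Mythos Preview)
Your proposal is correct and follows essentially the same route as the paper: both apply Proposition~\ref{Prop tB+sC lct} to reduce the claim to the inequality $(n-I)(1-\lambda m)\ge 0$, dispatch cases (a) and (b) immediately, and handle case (c) via the dichotomy from Remark~\ref{remark I=mp}(3) (either $I\ge 2m$, reducing to (a), or $I=n$, reducing to (b)). The only cosmetic difference is that the paper fixes $t=\min\{1,1+\tfrac{m}{I}-\lambda m\}$ at the outset and verifies the four inequalities directly, whereas you maximize over $t$ and then collapse the resulting three-term minimum; the substance is identical.
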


\begin{proof}
Here note that under condition (a) or (c), $\lambda\leq \min \{1, \frac{1}{m}+\frac{1}{I}\}$ automatically holds.
Denote $t:=\min\{1, 1+\frac{m}{I}-\lambda m\}\geq 0.$ 
The statement is equivalent to
$\lct(\widehat{\mathbb{C}^2}\ni o, \lambda B+tC)\geq 1$.
By Proposition~\ref{Prop tB+sC lct}, this is equivalent to show that 
 \begin{enumerate}
 \item\label{3.8 proof condition 1} $\frac{m+n}{\lambda mn+tI}\geq 1$,
 \item\label{3.8 proof condition 2} ${m+I}\geq {(\lambda m+t)I}$, 
 \item\label{3.8 proof condition 3} $1\geq \lambda$, and
 \item\label{3.8 proof condition 4} $1\geq t$.
 \end{enumerate}
 Here~\eqref{3.8 proof condition 2} and~\eqref{3.8 proof condition 4} follow from the definition of $t$, and~\eqref{3.8 proof condition 3} follows from the condition on $\lambda$. 
 To show~\eqref{3.8 proof condition 1}, we may assume that $m\ge 2$. It suffices to prove that 
 $${m+n}\geq {\lambda mn+\left(1+\frac{m}{I}-\lambda m\right)I},$$
 which is equivalent to $(n-I)(1-\lambda m)\geq 0$.
 Recall that $n\geq I$, so~\eqref{3.8 proof condition 1} holds if either $n=I$ or $\lambda m\leq 1$ holds. This proves the conclusion for (a) and (b).
 To conclude the proof, we want to show that if (c) holds, then either (a) or (b) holds. In fact, suppose that $\lambda I\leq 2$ and $\lambda m > 1$, then $I<2m$. Then by Remark~\ref{remark I=mp}(3), $I=n$.
 \end{proof}

 \begin{rem}
In applications, we only use Corollary~\ref{cor lct B C 1} when condition (a) holds. The advantage of this corollary is that we can get rid of $n$ in the first pair of Puiseux exponents of $f$ and the log canonical threshold can be estimated by only $m$ and $I$. In practice, $n$ is usually hard to control, while $m$ and $I$ can be controlled easily by geometric conditions. 
 \end{rem}
 
The following example shows that both Theorem~\ref{Smooth local case general beta version} and Corollary~\ref{cor lct B C 1} are optimal.
\begin{ex}\label{ex optimal lower bound}
Given two coprime positive integers $m$ and $I$ such that $m
< I$. Take a positive real number $\lambda$ such that $\lambda m\leq 1 \leq \lambda I$.
Consider $(\mathbb{C}^2, \lambda B)$ where $B=(x^{m}+y^{I}=0)$ and $C=(x=0)$. Then $\mult_o \lambda B=\lambda m$, $(\lambda B\cdot C)_o= \lambda I$.
A direct computation by Proposition~\ref{Prop tB+sC lct} shows that 
$(\mathbb{C}^2\ni o, \lambda B+(1+\frac{m}{I}-\lambda m)C)$ is lc but $(\mathbb{C}^2\ni o, \lambda B+(1+\frac{m}{I}-\lambda m+\epsilon )C)$ is not lc for any $\epsilon>0$. So in this case 
$$\lct(\mathbb{C}^2\ni o, \lambda B;C)= 1+\frac{m}{I}-\lambda m.$$
\end{ex}

Now we may show Theorem~\ref{Smooth local case general beta version} which could be regarded as an $\mathbb{R}$-divisor version of Corollary~\ref{cor lct B C 1}.

\begin{proof}[Proof of Theorem~\ref{Smooth local case general beta version}]

If $I\leq 1$, then $(X\ni P,B+C)$ is lc by \cite[Corollary~5.57]{KM98}. Hence we may assume that $I>1$.

We may replace $P\in X$ by the formal neighborhood $\widehat{X}$ of $P\in X$, which is isomorphic to the formal neighborhood $o\in \widehat{\mathbb{C}^2}$. So from now on we may assume that $P\in X$ is just $o\in \widehat{\mathbb{C}^2}$. Write ${B}=\sum_{i=1}^{n} b_i {B_i}$, where $b_i\in(0,1]$, and $\{B_i\}_{1\leq i\leq n}$ are distinct irreducible curves on $\widehat{\mathbb{C}^2}$ passing $o$.

If $n=1$, then we are done by Corollary~\ref{cor lct B C 1}. So we may assume that $n\geq 2$.

Set $s:=1+\frac{m}{I}-m$. The goal is to show that $(\widehat{\mathbb{C}^2} \ni o,B+s{C})$ is lc. Consider the {\it log canonical threshold polytope} of the pair $(\widehat{\mathbb{C}^2} \ni o,s{C})$ with respect to the divisors ${B_1},\ldots,{B_n}$, $$P(\widehat{\mathbb{C}^2}\ni o,s{C};{B_1},\ldots,{B_n}):=\left\{(t_1,\ldots,t_n)\in \mathbb{R}^n_{\geq 0}\,\bigg\vert \left(\widehat{\mathbb{C}^2}\ni o ,s{C}+\sum_{i=1}^n t_i{B_i}\right)\text{ is lc}\right\}.$$
By Lemma~\ref{Convexity for lct}, $P(\widehat{\mathbb{C}^2}\ni o,s{C};{B_1},\ldots,{B_n})$ is a compact convex polytope in $\mathbb{R}^n$. It suffices to show that the convex polytope
$$\mathcal{P}:=\left\{(t_1,\ldots,t_n)\in\mathbb{R}^n_{\geq 0}\,\bigg\vert \mult_o\sum_{i=1}^n t_i{B_i}=m,\ \sum_{i=1}^n t_i({B_i}\cdot{C})_o=I\right\}$$ is contained in $P(\widehat{\mathbb{C}^2}\ni o ,s{C};{B_1},\ldots,{B_n})$. By Lemma~\ref{Linear algebra}, all the vertices of $\mathcal{P}$ are contained in $\bigcup_{i\neq j} E_{i,j}$, where $E_{i,j}:=\{(t_1,\cdots,t_n)\mid t_k=0 \text{ for } k\neq i,j\}$. Hence it suffices to show that $$E_{i,j}\cap\mathcal{P} \subseteq E_{i,j}\cap P(\widehat{\mathbb{C}^2} \ni o,s{C};{B_1},\ldots,{B_n})\simeq P(\widehat{\mathbb{C}^2} \ni o,s{C};{B_i},{B_j})$$ for all $1\leq i< j\leq n$.

Without loss of generality, we may just consider the case $(i, j)=(1,2)$.
It suffices to show that any vertex point of $E_{1,2}\cap\mathcal{P}$ is contained in $P(\widehat{\mathbb{C}^2} \ni o,s{C};{B_1},{B_2})$, where $E_{1,2}$ is identified with $\mathbb{R}^2$. 
Denote $\mult_o B_i=m_i, (B_i\cdot C)_o=I_i\ge 1$ for $i=1,2$.
 Take $(c_1, c_2)$ to be a vertex point of $E_{1,2}\cap\mathcal{P}$, then $(c_1, c_2)$ 
satisfies the following equations
\begin{align}\label{equations for b}
 m_1c_1+m_2c_2=m,\ \ \ I_1c_1+I_2c_2=I.
\end{align}
Here we recall that $m_1, m_2, I_1, I_2$ are positive integers, $m_1\leq I_1$, $m_2\leq I_2$, and $m\leq 1<I$.

Suppose that either $c_1=0$ or $c_2=0$, then $(c_1, c_2)\in P(\widehat{\mathbb{C}^2}\ni o,s{C};{B_1},{B_2})$ follows directly from Corollary~\ref{cor lct B C 1}.

Suppose that $c_1>0$ and $c_2>0$. Since $(c_1, c_2)$ is a vertex of $E_{1,2}\cap\mathcal{P}$, it is the unique solution of~\eqref{equations for b}. Thus $\frac{m_1}{I_1}\neq \frac{m_2}{I_2}$, and 
$$\min\{\frac{m_1}{I_1},\frac{m_2}{I_2}\}< \frac{m_1c_1+m_2c_2}{I_1c_1+I_2c_2}=\frac{m}{I}< \max\{\frac{m_1}{I_1},\frac{m_2}{I_2}\}.$$
Without loss of generality, we may assume that $\frac{m_1}{I_1}<\frac{m}{I}<\frac{m_2}{I_2}$. See Figure~\ref{2 case for dim 2 SM conj}.

\begin{figure}[ht]

\begin{tikzpicture}[scale=0.8] 
\draw[thick,->] (0,0) -- (5,0) node[anchor=north west] {};
\draw[thick,->] (0,0) -- (0,5) node[anchor=south east] {};
\draw[thick,-] (0,4.1)--(1.7,3.7);
\draw[thick,-] (1.7,3.7)--(2.8,2.8);
\draw[thick,-] (2.8,2.8)--(3.7,1.7);
\draw[thick,-] (3.7,1.7)--(4.1,0);
\draw[thick,-] (0,3) -- (1.8,1.8);
\draw[thick,-] (1.8,1.8)--(3,0);
\draw[dashed,thick,-] (1.8,1.8)--(4.5,0);
\node[below] at (4.5,0) {\tiny$(\frac{m}{m_1}, 0)$};
\node[left] at (0,3) {\tiny$(0, \frac{m}{m_2})$};
\node[below] at (3,0) {\tiny$( \frac{I}{I_1},0)$};
\node[below left] at (0,0) {\tiny$\mathbf{o}$};
\node[above right] at (1.7,1.7) {\tiny$(c_1,c_2)$};
\node[] at (3,4.4) {\tiny$P(\widehat{\mathbb{C}^2},s{C};{B_1},{B_2})$};
\node[] at (1.2,1.2) {};
\node[] at (2.5,-1) {\small When $m\geq\frac{m_2}{I_2}>\frac{m_1}{I_1}$.};
\end{tikzpicture}
\begin{tikzpicture}[scale=0.8] 
\draw[thick,->] (0,0) -- (5,0) node[anchor=north west] {};
\draw[thick,->] (0,0) -- (0,5) node[anchor=south east] {};
\draw[thick,-] (0,4.1)--(1.7,3.7);
\draw[thick,-] (1.7,3.7)--(2.8,2.8);
\draw[thick,-] (2.8,2.8)--(3.7,1.7);
\draw[thick,-] (3.7,1.7)--(4.1,0);
\draw[thick,-] (0,3) -- (1.8,1.8);
\draw[thick,-] (1.8,1.8)--(3,0);
\draw[dashed,thick,-] (4.3,0)--(1.8,1.8);
\draw[dashed,thick,-] (0,3.3)--(1.8,1.8);
\node[left] at (0,3.5) {\tiny$(0, \frac{1}{I_2})$};
\node[left] at (0,3) {\tiny$(0, \frac{m}{m_2})$};
\node[below] at (3,0) {\tiny$( \frac{I}{I_1},0)$};
\node[below] at (4.38,0) {\tiny$(\lambda_1, 0)$};
\node[below left] at (0,0) {\tiny$\mathbf{o}$};
\node[above right] at (1.7,1.7) {\tiny$(c_1,c_2)$};
\node[] at (3,4.4) {\tiny$P(\widehat{\mathbb{C}^2},s{C};{B_1},{B_2})$};
\node[] at (1.2,1.2) {};
\node[] at (2.5,-1) {\small When $\frac{m_2}{I_2}>m$.};
\end{tikzpicture}
\caption{}\label{2 case for dim 2 SM conj}
\end{figure}

If $m\geq\frac{m_2}{I_2}>\frac{m_1}{I_1}$, then we may write $c_1B_1+c_2B_2=\mu_1 \frac{m}{m_1}B_1+\mu_2 \frac{m}{m_2}B_2$ for $\mu_1=\frac{m_1c_1}{m}$ and $ \mu_2=\frac{m_2c_2}{m}$. Note that $\mu_1+\mu_2=1$. By Corollary~\ref{cor lct B C 1} and $m\leq 1$, $$\lct(\widehat{\mathbb{C}^2}\ni o,\frac{m}{m_i}B_i;C)\geq \min\left\{1, 1+\frac{m_i}{I_i}-m\right\}=1+\frac{m_i}{I_i}-m$$ for $i=1,2$. By Lemma~\ref{Convexity for lct} and the Cauchy--Schwarz inequality, we have
\begin{align*}
 \lct(\widehat{\mathbb{C}^2}\ni o,c_1B_1+c_2B_2;C)&\geq \mu_1 \lct(\widehat{\mathbb{C}^2}\ni o,\frac{m}{m_1}B_1;C)+\mu_2 \lct(\widehat{\mathbb{C}^2}\ni o,\frac{m}{m_2}B_2;C)\\
 &\geq 1-m+\mu_1\frac{m_1}{I_1}+\mu_2\frac{m_2}{I_2}=1-m+\frac{m_1^2c_1}{I_1m}+\frac{m_2^2c_2}{I_2m}\\
 &\geq 1-m+ \frac{(m_1c_1+m_2c_2)^2}{(I_1c_1+I_2c_2)m}=1-m+\frac{m}{I}=s.
\end{align*}

Otherwise, $\frac{m_2}{I_2}>m$. We may write 
 $c_1B_1+c_2B_2=\mu'_1 \lambda_1B_1+\mu'_2 \frac{1}{I_2}B_2$, where $\mu'_2=I_2c_2$, $\mu'_1=1-I_2c_2$, $\lambda_1=\frac{c_1}{1-I_2c_2}$. Note that $\mu'_1> 1-\frac{m_2c_2}{m}> 0$, $\mu'_1+\mu'_2=1$, and $\lambda_1\leq \frac{c_1}{1-\frac{m_2c_2}{m}}=\frac{m}{m_1}\leq \frac{1}{m_1}$.
By Corollary~\ref{cor lct B C 1}, we have
$$\lct(\widehat{\mathbb{C}^2}\ni o,\lambda_1B_1;C)\geq \min\left\{1, 1+\frac{m_1}{I_1}-\lambda_1 m_1\right\}\ \ \ \text{and}\ \ \ \lct(\widehat{\mathbb{C}^2}\ni o,\frac{1}{I_2}B_2;C)\geq 1.$$ 
By Lemma~\ref{Convexity for lct}, we have
\begin{align*}
 \lct(\widehat{\mathbb{C}^2}\ni o,c_1B_1+c_2B_2;C)
 &\geq \mu'_1 \lct(\widehat{\mathbb{C}^2}\ni o, \lambda_1 B_1;C)+\mu'_2 \lct(\widehat{\mathbb{C}^2}\ni o,\frac{1}{I_2}B_2;C)\\
 &\geq \min\left\{1, 1+\mu'_1m_1(\frac{1}{I_1}-\lambda_1)\right\}= \min\left\{1, 1+\frac{m_1}{I_1}(1-I)\right\}\\
 &\geq\min\left\{1, 1+\frac{m}{I}(1-I)\right\}=s.
\end{align*}
Here for the equality we use the fact that
$$
\mu'_1(\frac{1}{I_1}-\lambda_1)=\frac{1-I_2c_2-I_1c_1}{I_1}=\frac{1-I}{I_1}.
$$

In summary, we have showed that $(c_1, c_2)\in P(\widehat{\mathbb{C}^2}\ni o,s{C};{B_1},{B_2})$, and the proof is completed.
\end{proof}

\begin{lem}\label{Linear algebra}
Let $b_j\geq 0$ and $\mathbf{n}_j\in \Rr^n_{>0}$ for $j=1,2$. Assume that $n\geq 2$, then $$\mathcal{P}:=\{\mathbf{t}\in\Rr^n_{\ge0}\mid \langle \mathbf{n_j},\mathbf{t}\rangle= b_j,\,j=1,2\}$$ is a convex polytope, and all the vertices of $\mathcal{P}$ belong to $\bigcup_{1\leq i\neq j\leq n} E_{i,j}$, where $$E_{i,j}:=\{(t_1,\cdots, t_n)\in \mathbb{R}^n\mid \text{$t_k=0$ for $k\neq i,j$}\}.$$
\end{lem}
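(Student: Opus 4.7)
The plan is in two steps. First I would verify that $\mathcal{P}$ is a convex polytope: convexity is immediate from the defining linear (in)equalities, and boundedness follows because each coordinate $t_i$ is controlled by the first equality constraint. Since every entry of $\mathbf{n}_1$ is strictly positive and all $t_j\geq 0$, one has $0\leq t_i\leq b_1/(\mathbf{n}_1)_i$ for each $i$. So $\mathcal{P}$ is a closed, bounded, convex subset of $\mathbb{R}^n$ cut out by finitely many affine (in)equalities, i.e.\ a convex polytope.

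The substantive assertion is the one about vertices, and I would prove it by the standard perturbation criterion: a point $\mathbf{t}^*\in\mathcal{P}$ fails to be a vertex exactly when there is a nonzero $\mathbf{v}\in\mathbb{R}^n$ such that $\mathbf{t}^*\pm\epsilon\mathbf{v}\in\mathcal{P}$ for all sufficiently small $\epsilon>0$. Suppose $\mathbf{t}^*\in\mathcal{P}$ has at least three strictly positive coordinates, say $t_i^*,t_j^*,t_k^*>0$ for distinct indices $i,j,k$. Consider the linear subspace
$$V:=\{\mathbf{v}\in\mathbb{R}^n\mid v_\ell=0 \text{ for } \ell\notin\{i,j,k\},\ \langle \mathbf{n}_1,\mathbf{v}\rangle=\langle \mathbf{n}_2,\mathbf{v}\rangle=0\}.$$
Since $V$ is the intersection of a $3$-dimensional coordinate subspace with two linear hyperplanes, $\dim V\geq 1$, so I can pick a nonzero $\mathbf{v}\in V$. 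For sufficiently small $\epsilon>0$, the points $\mathbf{t}^*\pm\epsilon\mathbf{v}$ still have nonnegative coordinates (only the coordinates indexed by $\{i,j,k\}$ change, and these are strictly positive in $\mathbf{t}^*$) and by construction they satisfy $\langle\mathbf{n}_j,\cdot\rangle=b_j$ for $j=1,2$. Thus $\mathbf{t}^*\pm\epsilon\mathbf{v}\in\mathcal{P}$, exhibiting $\mathbf{t}^*$ as a nontrivial convex combination of two distinct points of $\mathcal{P}$, so $\mathbf{t}^*$ is not a vertex.

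Contrapositively, every vertex of $\mathcal{P}$ has at most two strictly positive coordinates, and therefore belongs to $E_{i,j}$ for some pair $i\neq j$ (if there are fewer than two positive coordinates, any $E_{i,j}$ containing the support works, which is legitimate since $n\geq 2$). I do not anticipate any real obstacle: the argument is purely linear-algebraic. The only point that requires a moment's care is that the perturbation $\mathbf{t}^*\pm\epsilon\mathbf{v}$ remains in the nonnegative orthant, but this is automatic from the strict positivity $t_i^*,t_j^*,t_k^*>0$ and the fact that $\mathbf{v}$ is supported on $\{i,j,k\}$.
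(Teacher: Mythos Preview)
Your proof is correct. The paper argues instead via facet-counting: it asserts $\dim\mathcal{P}\geq n-2$, so each vertex must lie on at least $n-2$ facets, and since the only facet-defining inequalities are the $n$ constraints $t_i\geq 0$, at least $n-2$ coordinates vanish at any vertex. Your perturbation argument reaches the same conclusion by the dual characterization of a vertex (no nontrivial segment through it), and has the mild advantage that it sidesteps the claim $\dim\mathcal{P}\geq n-2$, which is not literally true in degenerate cases (e.g.\ $b_1=b_2=0$ gives $\mathcal{P}=\{0\}$), though the conclusion holds there anyway. Both arguments are standard and equally short.
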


\begin{proof}
It is easy to check that $\mathcal{P}$ is a convex polytope of dimension at least $n-2$.
Note that each vertex of $\mathcal{P}$ belongs to at least $n-2$ faces of $\mathcal{P}$. Since $\mathcal{P}$ has at most $n$ faces $\{(t_1,\cdots,t_n)\in\Rr^n\mid t_i=0\}\cap \mathcal{P}$ for $i=1,2,\ldots,n$, we conclude that each vertex of $\mathcal{P}$ belongs to $\bigcup_{1\leq i< j\leq n} E_{i,j}$.
\end{proof}

\begin{cor}\label{Smooth local case}
Let $(X\ni P, B)$ be a germ of surface pair such that $X$ is smooth and $\mld(X\ni P, B)\geq 1$. Let $C$ be an smooth curve at $P$ such that $C\nsubseteq\Supp B$ and $(B\cdot C)_P\le 2$. Then $\lct(X\ni P,B;C)\geq \frac{1}{2}$.
\end{cor}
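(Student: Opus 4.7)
The plan is to reduce the corollary directly to Theorem~\ref{Smooth local case general beta version} by a short multiplicity computation and an elementary numerical check.

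First I would translate the hypothesis on $\mld(X\ni P,B)$ into a bound on $\mult_P B$. Let $\sigma:Y\to X$ denote the blow-up of the smooth point $P$ with exceptional divisor $E$. Since $X$ is a smooth surface one has
$a(E,X,B)=2-\mult_P B$,
so the assumption $\mld(X\ni P,B)\ge 1$ forces $\mult_P B\le 1$. In particular, the hypotheses of Theorem~\ref{Smooth local case general beta version} are satisfied with $m:=\mult_P B\le 1$ and $I:=(B\cdot C)_P$, which by assumption satisfies $I\le 2$.

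Next I would invoke Theorem~\ref{Smooth local case general beta version} to conclude
$$\lct(X\ni P,B;C)\ \ge\ \min\!\left\{1,\ 1+\tfrac{m}{I}-m\right\}.$$
If $m=0$ then $P\notin\Supp B$ (since $C\not\subseteq\Supp B$ and $C$ is smooth at $P$, we have $I\ge m$, so $I=0$ is forced too in that case), and then $\lct(X\ni P,B;C)=\lct(X\ni P;C)=1$ because $C$ is smooth; so we may assume $m>0$ and $I>0$.

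Finally I would verify the numerical inequality $1+\tfrac{m}{I}-m\ge \tfrac12$ under the constraints $0<m\le 1$ and $0<I\le 2$. Rearranging,
$$1+\tfrac{m}{I}-m-\tfrac12\ =\ \tfrac12-m\cdot\tfrac{I-1}{I},$$
and since $m\le 1$ and $\tfrac{I-1}{I}=1-\tfrac{1}{I}\le \tfrac12$ whenever $I\le 2$, the right-hand side is $\ge 0$. This gives $\lct(X\ni P,B;C)\ge \tfrac12$ as desired. There is no real obstacle here: all the difficulty has been absorbed into Theorem~\ref{Smooth local case general beta version}, and what remains is to observe that the extremal regime $(m,I)=(1,2)$ is precisely the boundary case where the lower bound $1+\tfrac{m}{I}-m$ equals $\tfrac12$.
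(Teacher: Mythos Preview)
Your proposal is correct and follows essentially the same approach as the paper: deduce $\mult_P B\le 1$ from the $\mld$ hypothesis, apply Theorem~\ref{Smooth local case general beta version}, and then check the elementary numerical inequality. The paper cites \cite[Lemma~3.15]{HL20} for the multiplicity bound and writes the final step as $\min\{1,1+\tfrac{m}{I}-m\}\ge 1+\tfrac{m}{2}-m\ge\tfrac12$, whereas you compute $a(E,X,B)=2-\mult_PB$ directly and rearrange the inequality as $\tfrac12-m\cdot\tfrac{I-1}{I}\ge 0$; these are equivalent, and your explicit treatment of the degenerate case $m=0$ (hence $I=0$) is a small tidiness improvement over the paper's proof, which leaves $m/I$ formally undefined there.
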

\begin{proof}
Note that $\mld(X\ni P, B)\geq 1$ implies that $m:=\mult_{P} B\le 1$ (cf. \cite[Lemma 3.15]{HL20}). By Theorem~\ref{Smooth local case general beta version} for the case when $I\leq 2$,
$$\lct(X\ni P,B;C)\geq \min\{1,1+\frac{m}{I}-m\}\geq 1+\frac{m}{2}-m\ge \frac{1}{2}.$$
\end{proof}

\section{Proofs of the main theorems}\label{sec proofs}
\subsection{Proof of Theorem~\ref{High dim}}

In this subsection, we give the proof of Theorem~\ref{High dim}.
We first treat the case when $\dim X=2$.

\begin{proof}[Proof of Theorem~\ref{High dim} when $\dim X=2$]
We split the proof into two steps.

\medskip

\noindent {\bf Step 1}. First we treat the case when $X$ is smooth, $B\geq 0$, and $\mld(X/Z\ni z, B)= 1$.

\medskip

As the generic fiber of $\pi$ is a rational curve,
we may run a $K_X$-MMP over $Z$ and reach a minimal ruled surface $\pi':X'\to Z$. Denote by $\phi: X\to X'$ the induced morphism and $B'=\phi_*B$. Since $K_{X}+B\sim_{\mathbb{R},Z} 0$, by the negativity lemma \cite[Lemma~3.39]{KM98}, $\phi^*(K_{X'}+B')=K_X+B$. Thus $K_{X'}+B'\sim_{\mathbb{R},Z} 0$, $\mld(X'/Z\ni z, B')=\mld(X/Z\ni z, B)$, and $\lct(X'/Z\ni z, B';\pi'^*z)=\lct(X/Z\ni z, B;\pi^*z)$. 
Now $F:=\pi'^{*}(z)\cong \mathbb{P}^1$, and $(K_{X'}+B')\cdot F=0$. By the adjunction formula, $K_{X'}\cdot F=-2$. Hence $(B'\cdot F)_P\leq 2$ for any closed point $P\in F$. Recall that $\mld(X'/Z\ni z, B')= 1$ implies that $F\nsubseteq \Supp B'$.
By Corollary~\ref{Smooth local case}, $\lct(X'\ni P, B';F)\geq \frac{1}{2}$ for any closed point $P\in F$, which implies that $\lct(X'/Z\ni z, B'; \pi'^{*}z)\geq \frac{1}{2}$. Hence $\lct(X/Z\ni z, B; \pi^{*}z)\geq \frac{1}{2}$.

\medskip

\noindent {\bf Step 2}. We treat the general case.

\medskip

Write $\mld(X/Z\ni z, B)= 1+\epsilon$ for some $\epsilon\geq 0$.
Let $f:W\to X$ be a log resolution of $(X,B+\pi^*z)$. 
We may write $K_W+B_W=f^{*}(K_X+B).$
Since $\mld(X/Z\ni z, B)= 1+\epsilon$, for any curve $C\subset \Supp f^*\pi^*z$, $\mult_C B_W\leq -\epsilon$. We can take $s\geq 0$ such that for any curve $C\subset \Supp f^*\pi^*z$, $\mult_C (B_W+sf^*\pi^*z)\leq 0$, and there exists a curve $C_0\subset \Supp f^*\pi^*z$ with $\mult_{C_0} (B_W+sf^*\pi^*z)= 0$. By Lemma~\ref{lem lc near z}, possibly shrinking $Z$ near $z$, we may assume that $(X, B)$ is lc, so the coefficients of $B_W$ are at most $1$. Since $B_W+sf^*\pi^*z$ is a simple normal crossing divisor, by \cite[Lemma 3.3]{CH21}, $\mld(W/Z\ni z, B_W+sf^*\pi^*z)= 1$. Note that $B_W+sf^*\pi^*z$ is not necessarily effective, so we can not apply Step 1 directly.

We may write $B_W+sf^*\pi^*z=D-G$, where $D$ and $G$ are effective 
$\mathbb{R}$-divisors with no common components.
Then 
$$K_W+D=f^*(K_X+B+s\pi^*z)+G\sim_{\mathbb{R}, Z} G.
$$ 
By Remark~\ref{rem lc B>0}, $B$ is effective on the generic fiber of $\pi$, so $\Supp G$ does not dominate $Z$. Possibly shrinking $Z$ near $z$, we may assume that $\Supp G\subset \Supp f^{*}\pi^*z$.
By the construction, $C_0\subset \Supp f^{*}\pi^*z$ but $C_0\not \subset \Supp G$. Note that $(W, D)$ is lc as the coefficients of $D$ are at most $1$.

If $E$ is a curve on $W$ with $(K_W+D)\cdot E<0$, then $G\cdot E<0$ and hence $E\subset \Supp G$. Then $E\not\subset \Supp D$, and $K_W\cdot E<0$. This implies that any $(K_W+D)$-MMP over $Z$ is also a $K_W$-MMP over $Z$, and it only contracts curves in $\Supp G$.

We may run a $(K_W+D)$-MMP over $Z$ and reach a minimal model $Y$ with induced maps $g:W\to Y$ and $h:Y \to Z$, such that $K_Y+D_Y\sim_{\Rr,Z}G_Y$ is nef over $Z$, where $D_Y$ and $G_Y$ are the strict transforms of $D$ and $G$ on $Y$ respectively.

As this MMP is also a $K_W$-MMP, $Y$ is a smooth surface. 
Recall that $C_0\not\subseteq \Supp G$, so $C_0$ is not contracted by this MMP and $\Supp G_Y \subsetneq \Supp h^{*}z$. Hence $G_Y=0$ as $G_Y$ is nef over $Z$. 
Since $K_Y+D_Y=g_{*}(K_W+D-G)\sim_{\Rr,Z}0$, by the negativity lemma \cite[Lemma~3.39]{KM98}, $$g^{*}(K_Y+D_Y)=K_W+B_W+sf^*\pi^*z=f^{*}(K_X+B+s\pi^*z)\sim_{\Rr,Z}0.$$ Thus $\mld(Y/Z\ni z, D_Y)=\mld(W/Z\ni z, B_W+sf^*\pi^*z)=1$, and $$\lct(Y/Z\ni z, D_Y;h^{*} z)=\lct(X/Z\ni z, B+s\pi^*z;\pi^{*}z)= \lct(X/Z\ni z, B;\pi^{*}z)-s.$$ Since $X$ and $Y$ are isomorphic over the generic point of $Z$, the generic fiber of $h$ is again a rational curve. So $(Y,D_Y)$ satisfies the setting in Step 1.
By Step 1, we get $\lct(Y/Z\ni z, D_Y; h^{*}z)\geq \frac{1}{2}$.

To conclude the proof, we need to give a lower bound for $s$. As $Y$ is smooth, $Y$ dominates a $\mathbb{P}^1$-bundle over $Z$. So there exists a curve 
$C_1$ on $Y$ such that
$C_1\subset \Supp h^{*}z$ and $\mult_{C_1} h^{*}z=1$.
Denote $C_1'$ to be the strict transform of $C_1$ on $W$, then
$C'_1\subset \Supp f^*\pi^*z$ and $\mult_{C'_1} f^*\pi^*z=1$.
Note that $\mult_{C'_1} (B_W+sf^*\pi^*z)=\mult_{C_1} (D_Y)\geq 0$. On the other hand, $\mult_{C'_1} (B_W+sf^*\pi^*z)\leq 0$ by the definition of $s$. So $\mult_{C'_1} (B_W+sf^*\pi^*z)= 0$. As $\mult_{C'_1} B_W\leq -\epsilon$, we have $s\geq \epsilon$.
 Hence 
 $$ \lct(X/Z\ni z, B;\pi^{*}z)=\lct(Y/Z\ni z, D_Y;h^{*} z)+s\geq \frac{1}{2}+\epsilon=\mld(X/Z\ni z, B)-\frac{1}{2}.$$ 
 This concludes the proof.
\end{proof}

Next we give the proof of Theorem~\ref{High dim} by induction on dimensions.

\begin{proof}[Proof of Theorem~\ref{High dim}]

We prove the theorem by induction on the dimension of $X$. 
We have proved the case when $\dim X=2$. Suppose that Theorem~\ref{High dim} holds when $\dim X=n$ for some integer $n\ge 2$, we will show that the theorem holds when $\dim X=n+1$. 
 
As the statement is local around $z\in Z$, we are free to shrink $Z$. Possibly shrinking $Z$ near $z$, we may assume that $\overline{z}$ is a Cartier divisor on $Z$. Denote $t:=\lct(X/Z\ni z, B; \pi^*{\overline{z}})$. Possibly shrinking $Z$ near $z$, we may assume that $(X, B+ t\pi^*\overline{z})$ is lc.

Pick a general hyperplane section $H\subset Z$ intersecting $\overline{z}$. Possibly shrinking $Z$ near $z$, we may assume that $H\cap \overline{z}$ is irreducible. Let $z_H$ be the generic point of $H\cap\overline{z}$ and $G:=\pi^* H$, then by the Bertini's theorem, the restriction $\pi_G=\pi|_G: G\to H$ is a contraction between normal varieties such that $K_G+B|_G\sim_{\mathbb{R},H} 0$. 
Since $H$ is general, 
we may assume that 
\begin{itemize}

 \item the generic fiber of $\pi_G$ is a rational curve, and 
 \item $(X, B+G+t\pi^*\overline{z})$ is lc.
\end{itemize}
Let $\phi: Y\to X$ be a log resolution of $(X,B+\pi^*\overline{z})$, we may write $$K_Y+\phi^{-1}_*B+\sum_i (1-a_i)E_i=\phi^*(K_X+B),$$ where $E_i$ are $\phi$-exceptional prime divisors. Possibly shrinking $Z$ near $z$, we may further assume that $z\in \pi\circ \phi (E_i)$ for each $i$. By taking $H$ general enough, we may assume that
\begin{itemize}

\item $\phi^*G=\phi_{*}^{-1}G$,
 and
 \item $\phi$ is a log resolution of $(X,B+\pi^*\overline{z}+G)$.
\end{itemize}
Note that as $\phi_{*}^{-1}G=\phi^*G=\phi^{*}\pi^*H$, we have $\pi\circ\phi(E_i\cap \phi_{*}^{-1}G)=\pi\circ\phi(E_i)\cap H$ for each $i$.

Since $$K_Y+\phi^{-1}_*B+\phi^{-1}_{*}G+\sum_i (1-a_i)E_i=\phi^*(K_X+B+G),$$ by the adjunction formula \cite[Proposition~5.73]{KM98}, $$K_{\phi_{*}^{-1}G}+\phi^{-1}_*B|_{\phi_{*}^{-1}G}+\sum_i (1-a_i)E_i|_ {\phi_{*}^{-1}G}=\phi^*(K_G+B|_G),$$ 
which implies that the induced morphism $\phi_{*}^{-1} (G)\to G$ is a log resolution of $(G, B|_G+ \pi_G^*\overline{z_H})$. 
Note that $z$ and $z_H$ are codimension $1$ points of $Z$ and $H$ respectively, we have 
\begin{align*}
 \mld(G/H\ni z_H, B|_G){}&=\min\{a_i\mid \pi\circ\phi(E_i\cap \phi_{*}^{-1}G)=\overline{z_H} \}\\{}&= \min\{a_i\mid \pi\circ\phi(E_i)=\overline{z} \}=\mld(X/Z\ni z,B).
\end{align*}
Similarly, we have 
 $$K_Y+\phi^{-1}_*B+\phi^{-1}_{*}G+t\phi^{-1}_{*}\pi^*\overline{z}+\sum_i (1-a'_i)E_i=\phi^*(K_X+B+G+t\pi^*\overline{z}),$$ $$K_{\phi_{*}^{-1}G}+\phi^{-1}_*B|_{\phi_{*}^{-1}G}+t\phi^{-1}_{*}\pi^*\overline{z}|_{\phi_{*}^{-1}G}+\sum_i (1-a_i')E_i|_ {\phi_{*}^{-1}G}=\phi^*(K_G+B|_G+t \pi_G^*\overline{z_H}).$$ 
 As $(X, B+G+t\pi^*\overline{z})$ is lc, so is 
 $(G, B|_G+t \pi_G^*\overline{z_H})$. On the other hand, by the definition of $t$, there exists an index $i$ such that $a_i'=0$ and $E_i \subseteq \Supp(\phi^*\pi^*\overline{z})$. In particular, $ \pi\circ \phi (E_i)=\overline{z}$.
Then by the construction, $E_{i}\cap \phi^{-1}_{*}G\neq \emptyset$, 
which gives a non-klt place of $(G, B|_G+t \pi_G^*\overline{z_H})$ whose image on $H$ is $\overline{z_H}$.
 Thus $t=\lct(G/H\ni z_{H},B|_G; \pi_G^{*}\overline{z_H})$.
 As $(G/H\ni z_{H},B|_G)$ satisfies the condtions of Theorem~\ref{High dim},
 \begin{align*}
 &\lct(X/Z\ni z, B; \pi^*{\overline{z}})={}\lct(G/H\ni z_{H},B|_G; \pi_G^{*}\overline{z_H})\\
 \geq{}& \mld(G/H\ni z_H, B|_G)-\frac{1}{2}
 ={}\mld(X/Z\ni z,B)-\frac{1}{2}
 \end{align*}
 by the induction hypothesis.

For the last statement, note that $\lct(X/Z\ni z, B; \pi^*\overline{z})\geq \frac{1}{2}$ implies that the coefficients of $B+\frac{1}{2}\pi^*\overline{z}$ are at most $1$ over a neighborhood of $z\in Z$.
So if $B$ is effective, then the multiplicity of each irreducible component of $\pi^*{z}$ is
bounded from above by $2$.
\end{proof}

The following example shows that the
bounds in Theorems~\ref{High dim SM} and~\ref{High dim} are optimal.

\begin{ex}\label{ex:lct is attained}
Consider $C\simeq \mathbb{P}^1$. Consider $Y=C\times \mathbb{P}^1$ and the natural projection $\pi: Y\to C$. Take $D$ to be a smooth curve on $Y$ of type $(1,2)$. 
Note that there exists a closed point $p\in C$ such that $D$ intersects $\pi^{-1} (p)$ at a single closed point with intersection multiplicity $2$.
Denote $F=\pi^{-1} (p)$. Then for any real number $s\geq 0$, we consider the sub-pair $(Y, D-sF)$. We can get a log resolution of $(Y, D-sF)$ by blowing up twice as the following. 
Let $Y_1\to Y$ be the blow-up at $F\cap D$. Denote by $F_1, D_1$ the strict transforms of $F, D$ on $Y_1$ respectively, and $E_1$ the exceptional divisor.
Then $F_1, D_1, E_1$ intersect at one point.
Let $Y_2\to Y_1$ be the blow-up at $F_1\cap D_1\cap E_1$, denote by $F_2,D_2, E_2$ the strict transforms of $F_1,D_1, E_1$ on $Y_2$ respectively, and $G_2$ the exceptional divisor on $Y_2$.
Then $Y_2$ is a log resolution of $(Y, D-sF)$.
Denote $\pi: Y_2\to C$ and $f: Y_2\to Y$ the induced maps. Then we have
$$
K_{Y_2}+D_2-sF_2-sE_2-2sG_2=f^*(K_Y+D-sF)\sim_{\mathbb{R}, C} 0.
$$
and
$$
\pi^*p=f^*F=F_2+E_2+2G_2.
$$
Denote $B_2=D_2-sF_2-sE_2-2sG_2$.
Then $(Y_2/C\ni p, B_2)$ satisfies the conditions of Theorem~\ref{High dim}.
It is easy to compute that $\mld(Y_2/C\ni p, B_2)=1+s$ and $\lct(Y_2/C\ni p, B_2;\pi^{*}p)=\frac{1}{2}+s$.
Also we have $\mult_{G_2}\pi^*p=2.$
This shows that Theorem~\ref{High dim} is optimal.

In this case, if we consider the canonical bundle formula of $(Y_2, B_2)$ over $C$, then the discriminant part $B_C=(\frac{1}{2}-s)p$, and hence for any $M_C\geq 0$ on $C$,
$$
\mld(C\ni p, B_C+M_C)\leq \mld(C\ni p, B_C)=\frac{1}{2}+s.
$$
This shows that Theorem~\ref{High dim SM} is optimal.
\end{ex}

The next example shows that Theorem~\ref{High dim} 
does not hold when $B$ is not effective on the generic fiber. 
\begin{ex}
Consider $C\simeq \mathbb{P}^1$. Consider the pair $(C\times \mathbb{P}^1, B:=B_1-B_2)$ and the natural projection $\pi: C\times \mathbb{P}^1\to C$, where $B_1$ is a curve on $C\times \mathbb{P}^1$ of type $(2,3)$ with a cusp $q\in B_1$, and $B_2$ is the section of $\pi$ containing $q$. Denote $p=\pi(q)$ and $D=\pi^{-1}(p)=\pi^{*}p$. We can take $B_1,B_2$ so that $B_1$, $B_2$, and $D$ are locally defined by $(x^2+y^3=0)$, $(y=0)$, and $(x=0)$, respectively, for some local coordinates $x,y$ near $q\in C\times \mathbb{P}^1$. Then $\lct(C\times \mathbb{P}^1/C\ni p,B;D)=\frac{1}{3}<\frac{1}{2}$. More generally, if $B$ is not effective on the generic fiber, then there is no uniform lower bound for $\lct(C\times \mathbb{P}^1/C\ni p,B;D)$ as in Theorem~\ref{High dim}.
\end{ex}

\subsection{Proofs of Theorems~\ref{thm: Shokurov P1 fibration lct 1/2} and~\ref{High dim SM}}

We first reduce Theorem~\ref{High dim SM} to the case when $B$ is a $\Qq$-divisor.

\begin{lem}\label{the reduction to Q case}
Assume that Theorem~\ref{High dim SM} holds when $B$ is a $\Qq$-divisor, then Theorem~\ref{High dim SM} holds.
\end{lem}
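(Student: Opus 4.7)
The strategy is to express the $\mathbb{R}$-boundary $B$ as a finite convex combination $B = \sum_{j=1}^m \lambda_j B^{(j)}$ of $\mathbb{Q}$-boundaries $B^{(j)}$, each of which still gives an lc-trivial fibration $\pi\colon (X,B^{(j)})\to Z$ satisfying the hypotheses of Theorem~\ref{High dim SM} with the same mld lower bound, apply the assumed $\mathbb{Q}$-case to each $B^{(j)}$ to obtain $M_Z^{(j)}\ge 0$, and then set $M_Z := \sum_j \lambda_j M_Z^{(j)}\ge 0$. The desired estimate will follow from linearity of the canonical bundle formula in $B$ along the approximation together with linearity of log discrepancies in the boundary.

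Set $\alpha := \mld(X/Z\ni z, B)\ge 1$. After shrinking $Z$ around $z$, I would fix a log resolution $\phi\colon Y \to (X, B + \pi^{*}\overline{z})$, let $E_1,\dots,E_k$ be the prime divisors on $Y$ whose centers map into $\overline{z}$ and realize $\alpha$ for $(X,B)$, and for each prime divisor $P$ on $Z$ with $z\in P$ fix one divisor $F_P$ on $Y$ computing $\lct(X/Z\ni\eta_P, B; \pi^{*}P)$. Working in the finite-dimensional $\mathbb{R}$-space $V$ spanned by the components of $\Supp B$, I would then define the subset $\mathcal{Q}\subseteq V$ of $B'$ satisfying the $\mathbb{Q}$-linear conditions:
\begin{enumerate}
\item[(a)] $K_X + B' \sim_{\mathbb{Q}, Z} 0$;
\item[(b)] every horizontal component of $\Supp B$ has nonnegative coefficient in $B'$;
\item[(c)] $a(E_\ell, X, B') \ge \alpha$ for $\ell = 1,\dots, k$, and $a(E, X, B') \ge 1$ for every other prime divisor $E$ on $Y$ with $\pi(\phi(E))\subseteq \overline{z}$;
\item[(d)] for each relevant $P$, the divisor $F_P$ still computes $\lct(X/Z\ni \eta_P, B'; \pi^{*}P)$ (a finite collection of linear inequalities expressing that $F_P$ beats every other prime divisor mapping to $\eta_P$).
\end{enumerate}
All these conditions are linear over $\mathbb{Q}$ in the coefficients of $B'$, so $\mathcal{Q}$ is a rational polytope in $V$ containing $B$. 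Hence $B$ can be written as a finite convex combination $B = \sum_j \lambda_j B^{(j)}$ of $\mathbb{Q}$-divisors $B^{(j)} \in \mathcal{Q}$ (e.g.\ rational vertices of a small rational face of $\mathcal{Q}$ through $B$).

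For each $j$, conditions (a)-(c) ensure that $\pi\colon (X, B^{(j)})\to Z$ is an lc-trivial fibration whose generic fiber is a rational curve and such that $\mld(X/Z\ni z, B^{(j)})\ge \alpha$ (the rank-one condition on the generic fiber uses (b) together with Remark~\ref{rem lc B>0}, and lc over the generic point of $Z$ is an open condition that survives for $B^{(j)}$ close to $B$, which may be arranged by shrinking the polytope $\mathcal{Q}$ around $B$). Applying the $\mathbb{Q}$-case of Theorem~\ref{High dim SM} yields $M_Z^{(j)}\ge 0$ representing the moduli part of $(X,B^{(j)})\to Z$ with $\mld(Z\ni z, B_Z^{(j)} + M_Z^{(j)}) \ge \alpha - \tfrac{1}{2}$. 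Now condition (d) ensures that on $\mathcal{Q}$ the coefficient $1 - \lct(X/Z\ni\eta_P, B'; \pi^{*}P)$ of each $P$ in $B_{Z}'$ is given by the same linear function of $B'$, so the discriminant part is linear on $\mathcal{Q}$: $B_Z = \sum_j \lambda_j B_Z^{(j)}$. Setting $M_Z := \sum_j \lambda_j M_Z^{(j)}\ge 0$, we have
\[
K_X + B = \sum_j \lambda_j (K_X+B^{(j)}) \sim_{\mathbb{R}} \sum_j \lambda_j \pi^{*}(K_Z + B_Z^{(j)} + M_Z^{(j)}) = \pi^{*}(K_Z + B_Z + M_Z),
\]
so $M_Z$ represents the moduli part of $(X,B)\to Z$. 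Finally, for any prime divisor $G$ over $Z$ with $\pi(G)\ni z$, log discrepancy is affine in the boundary, so
\[
a(G, Z, B_Z + M_Z) = \sum_j \lambda_j\, a(G, Z, B_Z^{(j)} + M_Z^{(j)}) \ge \sum_j \lambda_j \mld(Z\ni z, B_Z^{(j)} + M_Z^{(j)}) \ge \alpha - \tfrac{1}{2},
\]
giving $\mld(Z\ni z, B_Z + M_Z) \ge \mld(X/Z\ni z, B) - \tfrac{1}{2}$.

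The main obstacle is the careful construction of the rational polytope $\mathcal{Q}$ in the middle step: one must simultaneously preserve the relative $\mathbb{Q}$-triviality, the effectivity on the generic fiber, the sharp mld bound $\ge \alpha$ (not merely $\ge 1$), and the linear dependence of the discriminant $B_Z'$ on $B'$. Pinning down the critical divisors $E_\ell$ and the lct-computing divisors $F_P$ from a fixed log resolution reduces every such condition to a finite set of $\mathbb{Q}$-linear (in)equalities, after which the rest is routine convexity.
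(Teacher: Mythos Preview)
Your overall strategy---build a rational polytope of boundaries through $B$, write $B$ as a convex combination of rational vertices $B^{(j)}$, apply the $\mathbb{Q}$-case to each, and average---is exactly the paper's approach, and your treatment of condition~(d) (pinning down the lct-computing divisors $F_P$ so that $B'\mapsto B'_Z$ is linear) matches the paper's argument. However, condition~(c) as you wrote it breaks the proof.

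The inequality $a(E_\ell,X,B')\ge\alpha$ has an \emph{irrational} right-hand side in general, while $a(E_\ell,X,B')$ is an affine function of the coefficients of $B'$ with rational coefficients. Hence the half-space it cuts out is not defined over $\mathbb{Q}$, and your polytope $\mathcal{Q}$ is not a rational polytope. Worse, since this constraint is tight at $B$ (by the choice of $E_\ell$), every rational $B'\in\mathcal{Q}$ satisfies $a(E_\ell,X,B')>\alpha$ strictly, so no convex combination of rational points in $\mathcal{Q}$ can land on $B$. Separately, your condition~(c) does not even force $\mld(X/Z\ni z,B^{(j)})\ge\alpha$: the ``other'' divisors $E$ are only required to have $a(E,X,B')\ge 1$, so some such $E$ could have log discrepancy strictly between $1$ and $\alpha$ at $B^{(j)}$, making your final inequality $\mld(Z\ni z,B_Z^{(j)}+M_Z^{(j)})\ge\alpha-\tfrac12$ unjustified.

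The paper's fix is to drop the $\alpha$-constraint entirely: impose only the rational condition $\mld(X/Z\ni z,B')\ge 1$, and then \emph{shrink the rational polytope further so that $\mld(X/Z\ni z,B')$ is a single linear function of $B'$} (this is possible because the mld is a minimum of finitely many rational linear functions on the fixed log resolution). With linearity in hand, one gets
\[
\sum_j\lambda_j\,\mld(X/Z\ni z,B^{(j)})=\mld(X/Z\ni z,B)=\alpha,
\]
and the averaging step then yields $\sum_j\lambda_j(\alpha_j-\tfrac12)=\alpha-\tfrac12$ without ever needing $\alpha_j\ge\alpha$ individually. Your condition~(a) also needs more care: the paper does not work in $\Span(\Supp B)$ but instead writes $K_X+B=\sum d_iD_i$ with the $d_i$ $\mathbb{Q}$-linearly independent and invokes \cite[Lemma~5.3]{HLS19} to get $D_i\sim_{\mathbb{R},Z}0$, so that $K_X+B(\mathbf{t})\sim_{\mathbb{R},Z}0$ holds automatically for every $\mathbf{t}$ and the parameter space is all of $\mathbb{R}^m$.
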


\begin{proof}
Fix the choice of the Weil divisor $K_X$.
We may write $$K_X+B=\sum_{i=1}^m d_iD_i,$$ where $D_i$ are Cartier divisors on $X$ and $ d_1,\ldots,d_m$ are $\mathbb{Q}$-linearly independent real numbers. By \cite[Lemma 5.3]{HLS19}, $D_i$ is $\Rr$-Cartier and
$D_i\sim_{\Rr,Z}0$ for any $1\leq i\leq m$.

For a point $\mathbf{t}=(t_1, \dots, t_m)\in \mathbb{R}^m$, we denote 
$$B(\mathbf{t})=\sum_{i=1}^m t_iD_i-K_X.$$
Then for any $\mathbf{t}\in \mathbb{R}^m$,
$K_X+B(\mathbf{t})\sim_{\Rr,Z}0$.
Denote $\mathbf{d}=(d_1, \dots, d_m)$.

Take $f:Y\to X$ be a log resolution of $(X,B+\sum_{i=1}^m D_i)$ such that $\Supp f^{-1}\pi^{-1}(\overline{z})$ is a simple normal crossing divisor. Write $K_Y+B_Y(\mathbf{t})=f^{*}(K_X+B(\mathbf{t}))$. 

Possibly shrinking $Z$ near $z$, we may assume that $(X, B)$ is lc.
Note that $(X, B(\mathbf{t}))$ is lc if and only if the coefficients of $B_Y(\mathbf{t})$ are at most $1$. 
Note that $\mld(X/Z\ni z, B(\mathbf{t}))\geq 1$ if and only if for any prime divisor $E$ on $Y$ with $f(E)=\overline{z}$, $\mult_E B_Y(\mathbf{t})\leq 0$ (cf. \cite[Lemma 3.3]{CH21}). 
So the subset
$$\mathcal{P}_1:=\{\mathbf{t}\in \mathbb{R}^m\mid (X, B(\mathbf{t})) \text{ is lc}, \mld(X/Z\ni z, B(\mathbf{t}))\geq 1\}$$ is determined by finitely many linear functions in $\mathbf{t}$ with coefficients in $\mathbb{Q}$. In other words, 
$\mathcal{P}_1$ is a rational polytope containing $\mathbf{d}$. Note that $\mld(X/Z\ni z, B(\mathbf{t}))$ can be computed on $Y$ as the minimum of finitely many linear functions in $\mathbf{t}$ with coefficients in $\mathbb{Q}$, possibly replacing $\mathcal{P}_1$ with a smaller rational polytope containing $\mathbf{d}$, we may assume that $\mld(X/Z\ni z, B(\mathbf{t}))$ is linear on $\mathcal{P}_1$ and $\mathcal{P}_1$ is bounded.

By Remark~\ref{rem lc B>0}, $B$ is effective on the generic fiber of $\pi$. It is easy to see that $$\mathcal{P}_2:=\{\mathbf{t}\in \mathbb{R}^m\mid B(\mathbf{t}) \text{ is effective on the generic fiber of } \pi\}$$ is a rational polytope.

By the construction, $\mathcal{P}:=\mathcal{P}_1\cap \mathcal{P}_2$ is a bounded rational polytope containing $\mathbf{d}$. If $\mathbf{t}\in \mathcal{P}$, then $\pi: (X, B(\mathbf{t}))\to Z$ is an lc-trivial fibration satisfying Theorem~\ref{High dim SM}. So we can consider the canonical bundle formula $$K_X+B(\mathbf{t})=\pi^*(K_Z+B(\mathbf{t})_Z+M(\mathbf{t})_Z).$$ By the convexity of log canonical thresholds, irreducible components of $\Supp B(\mathbf{t})_Z$ belong to a finite set $\{P_1, P_2, \dots, P_k\}$ for any $\mathbf{t}\in \mathcal{P}$, here $\{P_1, P_2, \dots, P_k\}$ is the set of prime divisors on $Z$ in $\bigcup_{\mathbf{t}'}\Supp B(\mathbf{t}')_Z$ where the union runs over all vertex points $\mathbf{t}'\in \mathcal{P}$.
Denote the generic point of $P_j$ by $z_j$ for $1\leq j\leq k$.
Note that for any $1\leq j\leq k$, $\lct(X/Z\ni z_j, B(\mathbf{t}); \pi^*P_j)$ is computed on a log resolution as the minimum of finitely many linear functions in $\mathbf{t}$ with coefficients in $\mathbb{Q}$. So possibly replacing $\mathcal{P}$ with a smaller rational polytope containing $\mathbf{d}$, we may assume that $\lct(X/Z\ni z_j, B(\mathbf{t}); \pi^*P_j)$ is linear in $\mathbf{t}$ for any $1\leq j\leq k$.

Now we can take $\mathbf{t}_1, \dots, \mathbf{t}_l\in \mathcal{P} \cap \mathbb{Q}^m$ and positive real numbers $s_1,\dots, s_l$ such that $\sum_{i=1}^ls_i=1$ and $\sum_{i=1}^ls_i\mathbf{t}_i=\mathbf{d}$.
By the construction,
\begin{align*}
 B_Z={}&\sum_{j=1}^k (1-\lct(X/Z\ni z_j, B; \pi^*P_j))P_j\\
 ={}&\sum_{j=1}^k \sum_{i=1}^ls_i (1-\lct(X/Z\ni z_j, B(\mathbf{t}_i); \pi^*P_j))P_j=\sum_{i=1}^ls_i B(\mathbf{t}_i)_Z.
\end{align*}

By assumption, Theorem~\ref{High dim SM} holds for
$(X/Z\ni z, B(\mathbf{t}_i))$
for each $i$, that is, we can choose $M(\mathbf{t}_i)_Z\geq 0$ such that $$\mld(Z\ni z,B(\mathbf{t}_i)_Z+M(\mathbf{t}_i)_Z)\geq \mld(X/Z\ni z, B(\mathbf{t}_i))-\frac{1}{2}.$$
Then set $M_Z:=\sum_{i=1}^l s_i M(\mathbf{t}_i)_Z\geq 0$, we have
\begin{align*}
 &\mld(Z\ni z,B_Z+M_Z)\geq {} \sum_{i=1}^l s_i
 \mld(Z\ni z,B(\mathbf{t}_i)_Z+M(\mathbf{t}_i)_Z)\\
 \geq {}& \sum_{i=1}^l
 s_i\mld(X/Z\ni z, B(\mathbf{t}_i))-\frac{1}{2}
 ={}\mld(X/Z\ni z, B)-\frac{1}{2}.
\end{align*}
Here for the first inequality, we use the convexity of minimal log discrepancies, and for the last equality we use the linearity of $\mld(X/Z\ni z, B(\mathbf{t}))$ on $\mathcal{P}$.
\end{proof}

\begin{proof}[Proof of Theorem~\ref{High dim SM}]
By Lemma~\ref{the reduction to Q case}, we may assume that $B$ is a $\mathbb{Q}$-divisor.
As we described in Section~\ref{sec cbf}, there are b-divisors $\mathbf{B}$ and $\mathbf{M}$ such that
\begin{itemize}
 \item $\mathbf{B}_Z=B_Z$, $\mathbf{M}_Z=M_Z$, and
 \item for any birational contraction $g: Z'\to Z$, let $X'$ be a resolution of the main component of $X\times_{Z} Z'$ with induced morphisms $g': X' \to X$ and $\pi': X'\to Z'$. Write $K_{X'}+B'=g'^*(K_X+B)$, then $\mathbf{B}_{Z'}$ (respectively, $\mathbf{M}_{Z'}$) is the discriminant part (respectively, the moduli part) of the canonical bundle formula of $K_{X'}+B'$ on $Z'$.
\end{itemize}

We may write $\mathbf{B}=\sum d_P P$, where $P$ is the birational component of $\mathbf{B}$ and $d_P$ the corresponding coefficient. 
\begin{claim}\label{bound coefficient of birational divisor}For any birational component $P$ of $\mathbf{B}$ whose center on $Z$ is $\overline{z}$, $d_P\leq \frac{3}{2}-\mld(X/Z\ni z, B)$.
\end{claim}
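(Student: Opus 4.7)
\textbf{Plan of proof of Claim~\ref{bound coefficient of birational divisor}.} The strategy is to realize $d_P$ as a log canonical threshold on a suitable birational model of $Z$ and then apply Theorem~\ref{High dim}. Choose a birational contraction $g\colon Z'\to Z$ on which $P$ appears as a prime divisor, let $X'$ be a resolution of the main component of $X\times_Z Z'$ with induced morphisms $g'\colon X'\to X$ and $\pi'\colon X'\to Z'$, and set $B':=g'^{*}(K_X+B)-K_{X'}$. By the functoriality of the canonical bundle formula recalled in Section~\ref{sec cbf}, the trace of $\mathbf{B}$ on $Z'$ is the discriminant part of $K_{X'}+B'$ over $Z'$, so
$$d_P \;=\; 1-\lct(X'/Z'\ni \eta_P, B'; \pi'^{*}P),$$
and the claim is equivalent to
$$\lct(X'/Z'\ni \eta_P, B'; \pi'^{*}P) \;\geq\; \mld(X/Z\ni z,B)-\tfrac{1}{2}.$$

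The bridge between the two sides is the simple observation that $\fD(X'/Z'\ni \eta_P)\subseteq \fD(X/Z\ni z)$: if $E$ is a prime divisor over $X'$ with $\pi'(\Center_{X'}(E))=P$, then viewing $E$ as a divisor over $X$ gives $\pi(\Center_{X}(E))=g(P)=\overline{z}$, because the center of $P$ on $Z$ is $\overline{z}$ by hypothesis. Since $K_{X'}+B'=g'^{*}(K_X+B)$ is crepant, the log discrepancy $a(E,X',B')$ coincides with $a(E,X,B)$, and taking infima over the smaller set yields
$$\mld(X'/Z'\ni \eta_P, B') \;\geq\; \mld(X/Z\ni z,B) \;\geq\; 1.$$

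Finally, $\pi'\colon (X',B')\to Z'$ is an lc-trivial fibration of relative dimension $1$ whose generic fiber is a rational curve (inherited from $\pi$), and $\eta_P$ is a codimension-$1$ point with $\mld(X'/Z'\ni \eta_P, B')\geq 1$, so Theorem~\ref{High dim} applies at $\eta_P$ and gives
$$\lct(X'/Z'\ni \eta_P, B'; \pi'^{*}P) \;\geq\; \mld(X'/Z'\ni \eta_P, B')-\tfrac{1}{2} \;\geq\; \mld(X/Z\ni z,B)-\tfrac{1}{2},$$
which rearranges into $d_P\leq \frac{3}{2}-\mld(X/Z\ni z,B)$. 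The main subtlety I expect is the verification that $\pi'\colon X'\to Z'$ can genuinely be treated as an lc-trivial fibration at $\eta_P$, since resolving the main component of a fiber product need not preserve the contraction property everywhere. However, this is a codimension-$1$ issue on $Z'$, and shrinking $Z'$ to a neighborhood of $\eta_P$ (and, if necessary, replacing $\pi'$ by its Stein factorization there) changes neither $d_P$ nor the log canonical threshold, so Theorem~\ref{High dim} still applies.
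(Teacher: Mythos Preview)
Your proof is correct and follows essentially the same route as the paper: realize $P$ on a birational model $Z'$, use the functoriality of the discriminant b-divisor to write $d_P=1-\lct(X'/Z'\ni\eta_P,B';\pi'^{*}P)$, observe that $\fD(X'/Z'\ni\eta_P)\subseteq\fD(X/Z\ni z)$ so that $\mld(X'/Z'\ni\eta_P,B')\geq\mld(X/Z\ni z,B)\geq 1$, and then invoke Theorem~\ref{High dim}. The paper phrases the mld comparison as the two-step chain $\mld(X'/Z'\ni z',B')\geq\mld(X'/Z\ni z,B')=\mld(X/Z\ni z,B)$, but this is exactly your containment argument unpacked; your remark about Stein factorization over a neighborhood of $\eta_P$ is a reasonable way to address a technicality the paper leaves implicit.
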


We will proceed the proof assuming Claim~\ref{bound coefficient of birational divisor}. The proof of Claim~\ref{bound coefficient of birational divisor} will be given after the proof.

 By \cite[Theorem~8.1]{PS09} (see Remark~\ref{rem PS8.1}), $\mathbf{M}$ is b-semi-ample. Then 
 there exists a resolution $g:Z'\to Z$ such that $\mathbf{M}_{Z'}$ is semi-ample, and $\mathbf{B}_{Z'}+\Supp(g^{-1}(\overline{z}))$ is a simple normal crossing divisor.
Thus we may take a general $\mathbb{Q}$-divisor $L_{Z'}\geq 0$ on $Z'$ such that
 $\mathbf{M}_{Z'}\sim_{\Qq} L_{Z'}$,
 $\mathbf{B}_{Z'}+L_{Z'}$ is simple normal crossing, and for each prime divisor $P$ on $Z'$ whose center on $Z$ is $\overline{z}$, the coefficient of $P$ in $\mathbf{B}_{Z'}+L_{Z'}$ is at most $\frac{3}{2}-\mld(X/Z\ni z, B)$.
In this case, 
$
\mld(Z'/Z\ni z, \mathbf{B}_{Z'}+L_{Z'})\geq \mld(X/Z\ni z, B)-\frac{1}{2}.
$
Note that $$K_{Z'}+\mathbf{B}_{Z'}+L_{Z'}\sim_{\Qq}K_{Z'}+\mathbf{B}_{Z'}+\mathbf{M}_{Z'}=g^*(K_Z+B_Z+\mathbf{M}_Z)\sim_{\Qq, Z} 0,$$
hence by the negativity lemma \cite[Lemma~3.39]{KM98}, $$g^*(K_Z+B_Z+g_*L_{Z'})=g^*g_*(K_{Z'}+\mathbf{B}_{Z'}+L_{Z'})=K_{Z'}+\mathbf{B}_{Z'}+L_{Z'}.$$
Thus $ M_Z\sim_{\Qq} g_*L_{Z'}\geq 0$ and 
$
\mld(Z\ni z, B_{Z}+g_*L_{Z'})\geq \mld(X/Z\ni z, B)-\frac{1}{2}.$
\end{proof}

\begin{proof}[Proof of Claim~\ref{bound coefficient of birational divisor}]
Fix a birational component $P_0$ of $\mathbf{B}$ whose center on $Z$ is $\overline{z}$.
$$\xymatrix@R=2em{
 ({X'}, B') \ar[d]_{\pi'} \ar[r]^{g'} & (X, B) \ar[d]^{\pi}\\
Z' \ar[r]_{g} & Z
}
$$
Take a resolution $g: Z'\to Z$ such that $P_0$ is a prime divisor on $Z'$.
Denote the generic point of $P_0$ on $Z'$ to be $z'$ and hence $P_0=\overline{z'}$.
Let $X'$ be a resolution of the main component of $X\times_Z Z'$ 
with induced maps $g': {X'}\to X$ and $\pi': {X'}\to Z'$. 
We may write
$K_{X'}+B'=g'^*{(K_X+B)}$.
Then $$\mld({X'}/Z\ni z, B')=\mld(X/Z\ni z, B)\geq 1.$$ 
In particular, this implies that $$\mld({X'}/Z'\ni z', B')\geq \mld(X/Z\ni z, B) \geq 1.$$
By the construction, the generic fiber of $\pi'$ is a rational curve. So $({X'}/Z'\ni z', B')$ satisfies the assumptions of Theorem~\ref{High dim}.
By Theorem~\ref{High dim}, $$\lct({X'}/Z'\ni z',B';\pi'^*\overline{z'})\geq \mld({X'}/Z'\ni z', B')-\frac{1}{2}.$$
Hence by the definition of $\mathbf{B}$,
\begin{align*}
 d_{P_0}={}&1-\lct({X'}/Z'\ni z',B';\pi'^*\overline{z'})\\\leq{}& \frac{3}{2}-\mld({X'}/Z'\ni z', B')\leq \frac{3}{2}-\mld(X/Z\ni z, B).
\end{align*}
\end{proof}

\begin{proof}[Proof of Corollary~\ref{cor: High dim SM}]
This is directly by applying Theorem~\ref{High dim SM} to all comdimsion $\geq 1$ points on $Z$.
\end{proof}
\begin{proof}[Proof of Theorem~\ref{thm: Shokurov P1 fibration lct 1/2}] As the statement is local, we may assume that $Z$ is affine. Since $-K_X$ is ample over $Z$, there exists a positive integer $N$, such that $-NK_X$ is very ample over $Z$. Let $H$ be a general very ample divisor on $X$ such that $H\sim_{Z} -NK_X$ and take $B=\frac{1}{N}H$. Then $K_X+B\sim_{\Qq,Z}0$, $B$ has no vertical irreducible component over $Z$, and $(X,B)$ is canonical.
By Corollary~\ref{cor: High dim SM}, we can choose $M_Z\geq 0$ representing the moduli part and $B_Z$ the discriminant part of the canonical bundle formula of $K_X+B$ on $Z$, so that $(Z,B_Z+M_Z)$ is $\frac{1}{2}$-lc. Note that $B\geq 0$ implies that $B_Z\geq 0$. Thus $Z$ is $\frac{1}{2}$-lc.
\end{proof}

Finally, as an application of Corollary~\ref{cor: High dim SM}, we show the following weaker version of Iskovskikh's conjecture under more general setting without using the classification of terminal singularities in dimension $3$ as in \cite{MP08}.

\begin{cor}\label{cor weak isk}
Let $\pi:X\to Z$ be a contraction between normal varieties, such that
\begin{enumerate}
 \item $\dim X-\dim Z=1$, 
 \item\label{4.8 condition 2} there is no prime divisor $D$ on $X$ such that $\text{\rm codim}(\pi(D), Z)\geq 2$,
 \item\label{4.8 condition 3} $X$ is terminal,
 \item $K_Z$ is $\mathbb{Q}$-Cartier, and
 \item\label{4.8 condition 5} $-K_X$ is ample over $Z$.
\end{enumerate}
Then $Z$ is $\frac{1}{2}$-klt.
\end{cor}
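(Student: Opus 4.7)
The plan is to work pointwise: for each $z\in Z$ with $\operatorname{codim}(\overline{z},Z)\ge 1$, I will show $\mld(Z\ni z)>\frac{1}{2}$. When the codimension equals $1$, normality of $Z$ makes $Z$ regular at $z$, so $\mld(Z\ni z)=1>\frac{1}{2}$ automatically. When the codimension is $\ge 2$, the strategy mimics the proof of Theorem~\ref{thm: Shokurov P1 fibration lct 1/2}: pick a positive integer $N$ with $-NK_X$ very ample over $Z$, take a sufficiently general $H$ with $H\sim_Z -NK_X$ (the precise notion of ``sufficiently general'' will be specified below and may depend on $z$), and set $B:=\frac{1}{N}H$. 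Then $(X,B)$ is canonical, $B\ge 0$ has no vertical irreducible component over $Z$, $K_X+B\sim_{\QQ,Z}0$, and $X$ is of Fano type over $Z$; in particular $(X,B)\to Z$ is an lc-trivial fibration with generic fiber $\Pp^1$. The goal is to prove $\mld(X/Z\ni z,B)>1$; then Theorem~\ref{High dim SM} produces some $M_Z\ge 0$ with $\mld(Z\ni z,B_Z+M_Z)>\frac{1}{2}$, and $\mld(Z\ni z)\ge\mld(Z\ni z,B_Z+M_Z)>\frac{1}{2}$ follows since $B_Z+M_Z\ge 0$.

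To establish $\mld(X/Z\ni z,B)>1$ at such a $z$, fix a log resolution $\phi:Y\to X$ of $(X,\pi^{-1}(\overline{z}))$. By Definition~\ref{defn: relative mld}, $\mld(X/Z\ni z,B)$ is the minimum of $a(E,X,B)$ over the finitely many prime divisors $E\subset Y$ with $\pi(\phi(E))=\overline{z}$. Since $\operatorname{codim}(\overline{z},Z)\ge 2$, condition~\eqref{4.8 condition 2} forbids any prime divisor on $X$ from mapping onto $\overline{z}$; hence each such $E$ must be $\phi$-exceptional, and $\Center_X(E)=\phi(E)\subset\pi^{-1}(\overline{z})$ is a proper subvariety of $X$ of codimension $\ge 2$. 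Terminality of $X$ then yields $a(E,X,0)>1$ for every such $E$. A sufficiently general $H\sim_Z -NK_X$ avoids containing any of the finitely many proper subvarieties $\phi(E)$, forcing $\mult_E\phi^*H=0$; for such $H$, $\phi$ remains a log resolution of $(X,B+\pi^{-1}(\overline{z}))$ and $a(E,X,B)=a(E,X,0)>1$. Taking the minimum over the finite family gives $\mld(X/Z\ni z,B)>1$ as required, and the pointwise argument is complete.

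The main obstacle is the essential role of condition~\eqref{4.8 condition 2}: without it, a vertical prime divisor $D$ on $X$ with $\pi(D)=\overline{z}$ would contribute $a(D,X,B)=1$ and force $\mld(X/Z\ni z,B)=1$, so the argument would only recover the $\frac{1}{2}$-lc conclusion already supplied by Corollary~\ref{cor: High dim SM}. The codimension hypothesis, together with terminality of $X$, is precisely what eliminates this obstruction at codimension-$\ge 2$ points and provides the strict inequality needed to upgrade from $\frac{1}{2}$-lc to $\frac{1}{2}$-klt.
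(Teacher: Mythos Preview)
Your argument is correct and takes a genuinely different route from the paper's proof.

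The paper argues by contradiction: assuming some exceptional divisor $E$ over $Z$ has $a(E,Z)=\tfrac12$, it extracts $E$ on a model $Z'\to Z$, builds a one-parameter family of boundaries $B^t$ (with $\Supp B^t\supset\pi^{-1}(c_Z(E))$ and the multiplicities varying nontrivially in $t$), applies only the $\tfrac12$-lc conclusion of Corollary~\ref{cor: High dim SM} to each $(X,B^t)$, and then uses b-semi-ampleness of the moduli part to force $\lct(X'/Z'\ni\eta_E,\,G+g'^*B^t;\pi'^*E)\equiv\tfrac12$ for all $t$, contradicting the nontrivial $t$-dependence.

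Your proof is more direct: working pointwise at a codimension~$\ge 2$ point $z$, you choose a single general $H\sim_Z -NK_X$ missing the finitely many exceptional centers over $\pi^{-1}(\overline z)$, so that every divisor $E$ on the fixed log resolution with $\pi(\phi(E))=\overline z$ is $\phi$-exceptional (by condition~\eqref{4.8 condition 2}) and satisfies $a(E,X,B)=a(E,X,0)>1$ (by terminality). This yields $\mld(X/Z\ni z,B)>1$, and the full strength of Theorem~\ref{High dim SM} (the bound $\mld(Z\ni z,B_Z+M_Z)\ge\mld(X/Z\ni z,B)-\tfrac12$, not merely $\ge\tfrac12$) immediately gives the strict inequality.

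Your approach is shorter and makes transparent exactly where conditions~\eqref{4.8 condition 2} and~\eqref{4.8 condition 3} enter: they combine to ensure that \emph{no} divisor over $X$ centred over $\overline z$ has log discrepancy $\le 1$, so the sharper bound in Theorem~\ref{High dim SM} can be invoked. The paper's deformation argument, by contrast, uses only the weaker $\tfrac12$-lc output and instead exploits the moduli b-divisor more heavily; this is more elaborate here but illustrates a technique that may apply in situations where one cannot push $\mld(X/Z\ni z,B)$ strictly above $1$.
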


Here assumption~\eqref{4.8 condition 2} is a natural geometric condition, for example, it holds if all fiber of $\pi$ are 1-dimensional or if $\rho(X/Z)=1$.
\begin{proof}
As the statement is local, we may assume that $Z$ is affine. By Theorem~\ref{thm: Shokurov P1 fibration lct 1/2}, $Z$ is $\frac{1}{2}$-lc.
Assume to the contrary that $Z$ is not $\frac{1}{2}$-klt, then there exists an exceptional prime divisor $E$ over $Z$ such that $a(E, Z )=\frac{1}{2}$. Denote by $c_Z(E)$ the center of $E$ on $Z$.

By \cite[Corollary 1.4.3]{BCHM10}, we can find a proper birational morphism $g: Z'\to Z$ such that $E$ is the only $g$-exceptional divisor. Let $X'$ be a resolution of the main component of $X\times_{Z} Z'$ with induced morphisms $g': X' \to X$ and $\pi': X'\to Z'$. 
 $$\xymatrix@R=2em{X' \ar[d]_{\pi'} \ar[r]^{g'} & X \ar[d]_{\pi}\\
 Z' \ar[r]_{g} & Z
 }
 $$
We can write $K_{X'}+G=g'^*K_X$, $K_{Z'}+\frac{1}{2}E=g^*K_Z.$

As $-K_X$ is ample over $Z$, for $t\in (0, 1)$, we can take an effective $\mathbb{Q}$-divisor $B^t$ on $X$ such that 
\begin{itemize}
\item $(X,B^t)$ is canonical, 
\item $B^t$ has no vertical irreducible component over $Z$, 
\item $K_X+B^t\sim_{\mathbb{Q},Z} 0$, and
\item $\Supp B^t\supset \Supp(\pi^{-1}(c_Z(E)))$, and the multiplicity of each irreducible component of $\Supp(\pi^{-1}(c_Z(E)))$ in $B^t$ is a non-constant linear function in $t$. 
\end{itemize}
The construction is as follows. Take a sufficiently large $N$ such that $-NK_X\sim_{Z} H$ is a very ample divisor on $X$, and $\mathcal{O}_X(H)$ and $\mathcal{O}_X(H)\otimes I_{\Supp(\pi^{-1}(c_Z(E)))}$ are generated by global sections. Now take $B_1$ to be a general global section of $\mathcal{O}_X(H)$ and $B_2$ a general global section of $\mathcal{O}_X(H)\otimes I_{\Supp(\pi^{-1}(c_Z(E)))}$. Then $B^t=\frac{(1-st)}{N}B_1+\frac{st}{N} B_2$ satisfies the requirements for sufficiently small positive rational number $s$. Here assumption~\eqref{4.8 condition 3} guarantees that $(X, B^t)$ is canonical, and assumption~\eqref{4.8 condition 2} guarantees that $B^t$ has no vertical irreducible component over $Z$ as $\Supp(\pi^{-1}(c_Z(E)))$ has codimension at least $2$ in $X$.

Then by Corollary~\ref{cor: High dim SM}, for $t\in (0, 1)$, we can choose $M^t_Z\geq 0$ representing the moduli part of the canonical bundle formula of $K_X+B^t$ on $Z$, so that $(Z,B^t_Z+M^t_Z)$ is $\frac{1}{2}$-lc, where $B^t_Z\geq 0$ is the discriminant part. In particular, $c_Z(E)$ is not contained in $\Supp(B^t_Z+M^t_Z)$.
As we described in Section~\ref{sec cbf}, there are b-divisors $\mathbf{B}^t$ and $\mathbf{M}^t$ such that
\begin{itemize}
 \item $\mathbf{B}^t_Z=B^t_Z$, $\mathbf{M}^t_Z=M^t_Z$, 
 \item $K_{X'}+G+g'^*B^t=\pi'^*(K_{Z'}+\mathbf{B}^t_{Z'}+\mathbf{M}^t_{Z'})$,
 \item $K_{Z'}+\mathbf{B}^t_{Z'}+\mathbf{M}^t_{Z'}=g^*(K_Z+B^t_Z+M^t_Z)=K_{Z'}+\frac{1}{2}E+g^*(B^t_Z+M^t_Z).$
 \end{itemize}
 Recall that $\mathbf{M}^t$ is b-semi-ample by \cite[Theorem~8.1]{PS09} (see Remark~\ref{rem PS8.1}), so $\mathbf{M}^t_{Z'}\leq g^*M^t_Z$ by the negativity lemma \cite[Lemma~3.39]{KM98}.
 As $c_Z(E)$ is not contained in $\Supp(B^t_Z+M^t_Z)$, we get $\mult_E \mathbf{M}^t_{Z'}=0$ and then $\mult_E\mathbf{B}^t_{Z'}=\frac{1}{2}$.
The latter one implies that $\lct(X'/Z'\ni \eta_E, G+g'^*B^t; \pi'^*E)=\frac{1}{2}$ by definition, where $\eta_E$ is the generic point of $E$.
This is absurd, as by the construction of $B^t$, $\lct(X'/Z'\ni \eta_E, G+g'^*B^t; \pi'^*E)$ is a non-constant function in $t$. \end{proof}

\begin{rem}
\begin{enumerate}

 \item By Example~\ref{ex: counterex of 1/2-klt}, assumption~\eqref{4.8 condition 3} of Corollary~\ref{cor weak isk} can not be replaced by ``$X$ is canonical".
 
 \item We expect that assumptions~\eqref{4.8 condition 2} and~\eqref{4.8 condition 5} of Corollary~\ref{cor weak isk} are all necessary. In fact, by the terminalization of Example~\ref{ex: counterex of 1/2-klt}, assumptions~\eqref{4.8 condition 2} and~\eqref{4.8 condition 5} can not be removed at the same time.
\end{enumerate}

\end{rem}

Prokhorov provides us the following example, which shows that Corollary~\ref{cor weak isk} can not be improved if $\dim X\geq 4.$ 

\begin{ex}\label{ex: counterex 4.8}
Consider the following action of $\bmu_{2m+1}$ on $\mathbb{P}_x^1\times \mathbb{C}^3_{u, v, w}$:
$$
(x; u, v, w)\mapsto (\xi^m x; \xi u, \xi v, \xi^{m} w),
$$
where $m$ is a positive integer and $\xi$ is a primitive $(2m+1)$-th root of unity. Let $X=(\mathbb{P}^1\times \mathbb{C}^3)/\bmu_{2m+1}$, $Z= \mathbb{C}^3/\bmu_{2m+1}$, and $\pi: X\to Z$ the natural projection. Since $\bmu_{2m+1}$ acts freely in codimension $1$, $-K_X$ is $\pi$-ample and $\rho(X/Z)=1$. Note that $Z$ has an isolated cyclic quotient singularity of type $\frac{1}{2m+1}(1, 1, m)$ at the origin $o\in Z$, and $\mld(Z\ni o)=\frac{m+2}{2m+1}$ (see \cite{Amb06} for the computation of minimal log discrepancies of toric varieties). On the other hand, $X$ has two isolated cyclic quotient singularity of types $\frac{1}{2m+1}(m, 1, 1, m)$ and $\frac{1}{2m+1}(m+1, 1, 1, m)$, which are terminal (see \cite[(4.11)~Theorem]{YPG}). 
\end{ex}

\appendix
\section{Bounding log canonical thresholds by cyclic coverings}\label{sec appendix}

In this appendix, we will prove Theorem~\ref{Smooth local case general alpha version}, a weaker version of Theorem~\ref{Smooth local case general beta version}, by a different method. Although the result is weaker, the advantage is that we do not use the convexity to reduce to the case that $B$ is irreducible, instead we use a covering trick.

\begin{thm}\label{Smooth local case general alpha version}
Let $(X\ni P, B)$ be a germ of surface pair such that $X$ is smooth and $\mult_P B\leq 1$. Let $C$ be a smooth curve at $P$ such that $C\nsubseteq\Supp B$.
Denote $\mult_P B=m$, $(B\cdot C)_P=I$.
Suppose that $\frac{m}{I}\geq m-\frac{1}{2}$.
Then $\lct(X\ni P,B;C)\geq \min\{1, 1-m+\frac{m}{I}\}$.
\end{thm}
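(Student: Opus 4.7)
The strategy is to replace the convexity-based reduction to the irreducible case used in Theorem~\ref{Smooth local case general beta version} by a cyclic covering along $C$. As in that proof the case $I\le 1$ reduces to \cite[Corollary~5.57]{KM98}, so I may assume $I>1$, and the hypothesis $m/I\ge m-\tfrac{1}{2}$ becomes $m(I-1)/I\le \tfrac{1}{2}$. Passing to the formal completion $\widehat{X}$ at $P$ does not change the log canonical threshold, so I identify $(\widehat{X},P)$ with $(\widehat{\mathbb{C}^2},o)$ with $C=(y=0)$.

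For a positive integer $k$, let $f_k\colon Y_k\to\widehat{X}$ be the $k$-th cyclic cover branched along $C$; explicitly $Y_k=\Spec\mathbb{C}[[x,z]]$ with $y=z^k$. Then $Y_k$ is smooth, the reduced preimage $\widetilde{C}:=(z=0)$ of $C$ satisfies $f_k^{*}C=k\widetilde{C}$, and $K_{Y_k}=f_k^{*}K_{\widehat{X}}+(k-1)\widetilde{C}$. A direct discrepancy computation gives
$$ f_k^{*}(K_{\widehat{X}}+B+tC)=K_{Y_k}+f_k^{*}B+(kt-k+1)\widetilde{C} $$
for every real $t$, so $(\widehat{X},B+tC)$ is lc if and only if $(Y_k,f_k^{*}B+(kt-k+1)\widetilde{C})$ is lc. This yields the covering formula
$$ \lct(\widehat{X}\ni P,B;C)=\frac{\lct(Y_k\ni\widetilde{P},f_k^{*}B;\widetilde{C})+k-1}{k}, $$
where $\widetilde{P}=f_k^{-1}(P)$.

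Setting $s:=1-m+m/I$, the desired estimate $\lct(\widehat{X},B;C)\ge s$ becomes $\lct(Y_k,f_k^{*}B;\widetilde{C})\ge 1-km(I-1)/I$. The hypothesis ensures $I/(m(I-1))\ge 2$, so I may choose an integer $k\ge I/(m(I-1))$; for such $k$ the right-hand side is $\le 0$, and it therefore suffices to prove that $(Y_k,f_k^{*}B)$ is log canonical near $\widetilde{P}$.

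The main task---and the main technical obstacle---is this final verification on $Y_k$. Decomposing $B=\sum_i b_iB_i$ into formal irreducible branches with first Puiseux pairs $(m_i,n_i)$ and intersection numbers $I_i=(B_i\cdot C)_P$, each $f_k^{*}B_i$ splits into $\gcd(km_i,n_i)$ branches whose Puiseux data are determined by $(k,m_i,n_i)$ in the style of \cite{Kuw99}, with $(f_k^{*}B_i\cdot\widetilde{C})_{\widetilde{P}}=I_i$ preserved. I would verify the lc condition on $(Y_k,f_k^{*}B)$ via successive weighted blow-ups of $\widetilde{P}$, controlling the contributions of all branches simultaneously by Proposition~\ref{computation lct fw}, with the numerical inequalities $km(I-1)\ge I$ and $m\le 1$ as the crucial inputs. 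Without the simplification of reducing to a single irreducible component on $\widehat{X}$ afforded by convexity, this simultaneous control over the Puiseux data of all split branches is the heart of the argument and is precisely what the hypothesis $m(I-1)/I\le \tfrac{1}{2}$ is designed to enable.
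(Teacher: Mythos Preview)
Your cyclic-cover setup and the covering formula
\[
\lct(\widehat{X}\ni P,B;C)=\frac{\lct(Y_k\ni\widetilde{P},f_k^{*}B;\widetilde{C})+k-1}{k}
\]
are correct, and this is indeed the mechanism the paper uses in Appendix~\ref{sec appendix}. However, the crucial reduction step is wrong: it is \emph{not} true that $(Y_k,f_k^{*}B)$ is lc for $k\ge I/(m(I-1))$. Take $C=(y=0)$ and $B=\tfrac{3}{4}(x^{3}+y=0)$, so $m=\tfrac{3}{4}$, $I=\tfrac{9}{4}$, and $m/I=\tfrac{1}{3}\ge m-\tfrac{1}{2}=\tfrac{1}{4}$. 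Then $I/(m(I-1))=\tfrac{12}{5}$, so your smallest admissible cover has $k=3$; but $f_3^{*}B=\tfrac{3}{4}(x^{3}+z^{3}=0)$ and $\lct(\widehat{\mathbb{C}^2}\ni o;x^{3}+z^{3})=\tfrac{2}{3}<\tfrac{3}{4}$, so $(Y_3,f_3^{*}B)$ is not lc. In fact, as $k$ grows the pullback $f_k^{*}B$ only gets \emph{more} singular, so no large $k$ can work. The vague plan to ``verify the lc condition via successive weighted blow-ups'' therefore cannot succeed: the statement you are trying to verify is false.

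The paper's Appendix~\ref{sec appendix} runs the covering trick in the opposite direction. One takes $n\ge 2$ with $n\le I/(m(I-1))$ (this is exactly where the hypothesis $m-\tfrac{m}{I}\le\tfrac12$ enters), sets $\epsilon=1-n\bigl(m-\tfrac{m}{I}\bigr)\in[0,1)$, and is reduced to showing that the $\bmu_n$-invariant pair $(\widehat{\mathbb{C}^2},\mu^{*}B+\epsilon C)$ is lc. This is then checked not by Puiseux analysis of branches but by an equivariant version of Var\v{c}enko's theorem (Theorem~\ref{varcenko theorem}): for every $\bmu_n$-invariant coordinate system and every weight $w$, one bounds the weighted order of the defining equation by $w(x')+w(y')$ via two elementary cases ($nw(x)\le w(y')$ versus $nw(x)>w(y')$). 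That Newton-polytope computation is the genuine content replacing the convexity argument, and it is missing from your proposal.
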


\begin{defn}
For an effective $\mathbb{Q}$-divisor $B$ on a smooth formal surface germ $P\in X$ with local coordinate systems $(x, y)$, suppose that we have an expression $B=\sum_{i=1}^k b_i B_i$ where $B_i$ are Cartier divisors defined by equations $(f_i=0)$ for $f_i(x,y)\in \mathbb{C}[[x,y]]$, by abusing the notation, we say that $(f=0)$ is the equation of $B$ where $f=\prod_{i=1}^{k}f_i(x, y)^{b_i}$. Given weights $w(x)$ and $w(y)$, we define $w (f_i )$ to be the weight of the lowest weight term of $f_i$, and define $w(f)=\sum_{i=1}^k b_iw (f_i )$. Note that $w(f)$ does not depend on the choice of expressions of $B$. We say $w(f)$ is the weight of $f$ with respect to $w(x),w(y)$.
\end{defn}

Here we recall an equivariant version of a theorem due to Var\v{c}enko on computing log canonical thresholds on a smooth formal surface germ.
\begin{thm}[{\cite{Var76}, \cite[Theorem~6.40]{KSC04}}]\label{varcenko theorem}
Let $P\in X$ be a smooth formal surface germ. Let $G$ be a finite Abelian group acting on $P\in X$ and 
let $B$ be an effective $G$-invariant $\mathbb{Q}$-divisor on $X$. 
Then 
$$\lct(X\ni P; B)=\inf_{x,y, w}\frac{w(x) + w(y)}{w(f)}
$$
where the infimum runs over all $G$-invariant local coordinate systems $(x, y)$ for $P\in X$ and over all choices of weights $w(x)$ and $w(y)$ (positive integers), and where $(f = 0)$ is the equation of the $\mathbb{Q}$-divisor $B$ in the coordinates $x, y$. Here a local coordinate system $(x, y)$ is {\it $G$-invariant} if $(x=0)$ and $(y=0)$ are $G$-invariant.
\end{thm}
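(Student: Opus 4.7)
The plan is to prove the two inequalities $\le$ and $\ge$ separately.

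For the inequality $\lct(X\ni P; B) \le \inf_{x,y,w} \frac{w(x)+w(y)}{w(f)}$, fix any $G$-invariant local coordinate system $(x,y)$ and positive integer weights $w(x), w(y)$. Since $G$ is a finite abelian group acting in characteristic zero and preserving both $(x=0)$ and $(y=0)$, the representation of $G$ on $\mathfrak{m}_P/\mathfrak{m}_P^2$ is diagonalizable, and one may arrange $x, y$ themselves to be $G$-eigenvectors transforming by characters $\chi_x, \chi_y$. The weighted blow-up $\pi\colon \widetilde{X}\to X$ with weights $(w(x), w(y))$ is then $G$-equivariant, and its exceptional prime divisor $E$ satisfies $a(E;X)=w(x)+w(y)$ and $\mult_E \pi^*B = w(f)$. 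Hence $a(E;X,tB)=w(x)+w(y)-t\,w(f)$ becomes non-positive as soon as $t\ge \frac{w(x)+w(y)}{w(f)}$, forcing $\lct(X\ni P;B)\le \frac{w(x)+w(y)}{w(f)}$. Taking the infimum over all admissible $(x,y,w)$ yields the upper bound.

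For the reverse inequality, the goal is to produce $G$-invariant coordinates and positive integer weights whose ratio $\frac{w(x)+w(y)}{w(f)}$ equals (or approximates from above) the true log canonical threshold $c:=\lct(X\ni P;B)$. Starting from a pair of $G$-semi-invariant coordinates $x_0, y_0$ as above, take a $G$-equivariant log resolution $\mu\colon Y\to X$ of $(X,B)$ (such a resolution exists by the functoriality of resolution, applied to $(X, B+\text{Ram})$ and then descended) and let $E_0$ be a $\mu$-exceptional divisor over $P$ realizing $c=\frac{1+a(E_0;X)}{\mult_{E_0}\mu^*B}$. We argue by induction on the length of the tower of point blow-ups in $\mu$ producing $E_0$ that this tower may be replaced by a single $G$-equivariant weighted blow-up after a Tschirnhaus-type coordinate change $y\mapsto y-\phi(x)$ (or its symmetric analogue in $x$) which brings $E_0$ into monomial form in the new coordinates $(x',y')$. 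Once $E_0$ is monomial for weights $w=(w(x'),w(y'))$, its discrepancy and the multiplicity of $\mu^*B$ along it coincide with $w(x')+w(y')$ and $w(f)$ respectively, so $\frac{w(x')+w(y')}{w(f)}=c$.

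The main technical obstacle is ensuring that every coordinate change used in the inductive step stays within the $G$-equivariant category. This is handled by the following character-tracking observation: since $B$ is $G$-invariant, the equation $f$ is a $G$-semi-invariant with a single character $\chi_f$, and its Newton polygon consequently lies in one coset of the sublattice of $\mathbb{Z}^2$ determined by $\chi_x,\chi_y$. The monomials of $f$ which a Tschirnhaus transformation would remove to make $E_0$ monomial satisfy exactly the same character condition as $y$ itself (since they arise from expanding $(y-\phi(x))^k$ and matching weights), so the polynomial $\phi$ collecting them is automatically a $G$-semi-invariant of weight $\chi_y$, and $y-\phi(x)$ remains a $G$-eigencoordinate. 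Granted this, the classical Varčenko procedure runs verbatim in the $G$-equivariant setting; a standard approximation argument (the infimum need not be attained in general) then closes the gap between $c$ and the ratios achieved, completing the lower bound and hence the proof.
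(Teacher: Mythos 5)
Your upper-bound direction is fine (and in fact needs no equivariance at all: any monomial valuation bounds the lct from above, whatever the coordinates). The gap is in the lower bound, at exactly the point that makes the equivariant statement nontrivial. First, your induction is phrased as bringing a chosen divisor $E_0$ computing $c$ into monomial form in $G$-invariant coordinates; but a valuation that is monomial with respect to $G$-eigencoordinates is automatically $G$-invariant, whereas the divisors computing $c$ are in general only permuted by $G$, so for a non-invariant $E_0$ the stated goal is unattainable and the induction as formulated cannot run. Second, the character-tracking justification for the equivariance of the substitution $y\mapsto y-\phi(x)$ is not a proof: $G$-semi-invariance of $f$ only tells you that the Newton polygon lies in one coset and that the set of roots of the relevant face polynomial is permuted by $G$. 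If the degenerate face had two $G$-conjugate bad roots, say $f_w=x^{\alpha}(y-cx^n)^k(y+cx^n)^k$ with $\chi_x^n\neq\chi_y$, the needed substitution would not be semi-invariant, and your parenthetical reason (``the monomials arise from expanding $(y-\phi(x))^k$'') does not exclude this, because cancellations between conjugate factors destroy the consecutive monomials you would need to compare characters. What saves the approach is a fact you neither state nor prove: the root forcing recursion is unique. Indeed recursion is only needed when the face has a smooth bad branch (weights $(1,n)$ after normalization), and writing $f_w=x^{\alpha}y^{\beta}\prod_i(y-c_ix^n)^{k_i}$ with $c_0=(1+n)/w(f)$, the condition $c_0k_i>1$ gives $(1+n)k_i>\alpha+n\beta+n\sum_j k_j$, hence $k_i>nk_j$ for every $j\neq i$, so at most one root is bad; uniqueness plus the $G$-action on the roots then forces $\chi_x^n=\chi_y$ and the equivariance you want. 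Without this lemma (and a reformulation of the induction in terms of $f$ and its Newton data rather than a fixed $E_0$), the key step is unjustified.

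For comparison, the paper's proof of Theorem~\ref{varcenko theorem} does not rerun the Newton-polygon algorithm equivariantly at all: it quotes the non-equivariant \cite[Theorem~6.40]{KSC04}, then manufactures uniqueness by the equivariant tie-breaking of Proposition~\ref{prop unique E}, producing a $G$-invariant auxiliary boundary whose unique non-klt place $E$ still computes the lct of $B$; $E$ is a weighted blow-up divisor by \cite{Kaw17}, and an averaging argument over the abelian group yields $G$-eigencoordinates realizing that weighted blow-up. If you supply the uniqueness-of-the-bad-root argument above and recast the induction accordingly, your route becomes a legitimate and more elementary alternative to the tie-breaking/Kawakita/averaging path.
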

\begin{proof}
If $B$ is a Cartier divisor and $G$ is trivial, then this is exactly \cite[Theorem~6.40]{KSC04}.
In general, 
if $B$ is a $\mathbb{Q}$-divisor, then we may assume that $mB$ is Cartier for some positive integer $m$. Hence by \cite[Theorem~6.40]{KSC04}, 
$$
 \lct(X\ni P; B)=m\lct(X\ni P; mB)= m\inf_{x,y,w}\frac{w(x) + w(y)}{w(f^m)}= \inf_{x,y,w}\frac{w(x) + w(y)}{w(f)},
$$
where the infimum runs over all local coordinate systems $(x, y)$ for $P\in X$ and over all choices of weights $w(x)$ and $w(y)$, and where $(f = 0)$ is the equation of the $\mathbb{Q}$-divisor $B$ in the coordinates $x, y$.

Note that in the above formula, we need to consider all local coordinate systems instead of $G$-invariant ones.
So to conclude the proof, we only need to show that $\lct(X\ni P; B)$ is computed by a weighted blow-up in a suitable $G$-invariant local coordinate system, that is, denote $t=\lct(X\ni P; B)$, then there exists a weighted blow-up $\pi: Y\to X$ at $P$ in a suitable $G$-invariant local coordinate system and a $G$-invariant prime divisor $E$ on $Y$ with $P\in \pi(E)$ such that $a(E, X, tB)=0$. Here it is possible that $\pi$ is the identity map and $E$ is a prime divisor on $X$. 

Take $Z$ to be the minimal non-klt center of $(X, tB)$ containing $P$. Then $Z$ is $G$-invariant by the minimality. By Proposition~\ref{prop unique E}, there exists a $G$-invariant effective $\mathbb{Q}$-Cartier divisor $B'$ such that 
\begin{enumerate}
\item $Z$ is an isolated non-klt center of $(X, B')$, and
\item there is a unique prime divisor $E$ over $X$ with center $Z$, such that $a(E,X,B')=0$. Moreover, $a(E,X,tB)=0$.
\end{enumerate} 
In other words, $E$ is the unique prime divisor over $X$ computing $\lct(X\ni P; B')$. 
By the proof of \cite[Theorem~6.40]{KSC04} (or \cite[Theorem~1]{Kaw17}), $E$ can be obtained by a weighted blow-up $\pi: Y\to X$. By the uniqueness of $E$, $\pi$ is $G$-equivariant.

Finally we show that the $G$-equivariant weighted blow-up $\pi$ is indeed a weighted blow-up in a suitable $G$-invariant local coordinate system following \cite[Theorem~1]{Kaw17}. Let $\mathfrak{m}$ be the maximal ideal of $\mathcal{O}_{X, P}$. As $G$ is Abelian, for any $k\geq 2$,
consider the subset $V_k \subseteq\mathfrak{m}/\mathfrak{m}^k$ consisting of all common eigenvectors of $g^*$-action for all $g\in G$, then $\text{Span}_\mathbb{C}(V_k)=\mathfrak{m}/\mathfrak{m}^k$ and $\{V_k\}$ forms an inverse system. By considering $\varprojlim V_k$, we can find a local coordinate system $(x, y)$ such that for any $g\in G$, $g^*(x)=\lambda_g x$ and $g^*(y)=\mu_g y$ for some $\lambda_g, \mu_g\in \mathbb{C}^*$.
Consider 
$$
a=\min_{t\in \mathfrak{m}\setminus \mathfrak{m}^2}\text{ord}_E (t)
\ \ \ \text{and}\ \ \ 
b=\max_{t\in \mathfrak{m}\setminus \mathfrak{m}^2}\text{ord}_E (t),
$$
where $\text{ord}_E$ is the divisorial valuation defined by $E$.
Recall that by the proof of \cite[Theorem~1]{Kaw17}, if $\text{ord}_E(x_1)=a$ and $
\text{ord}_E(y_1)=b$ for some $x_1, y_1\in \mathfrak{m}\setminus \mathfrak{m}^2$, then $\pi$ is a weighted blow-up in the local coordinate system $(x_1, y_1)$.
Fix a suitable local coordinate system $(x', y')$ with $\text{ord}_E(x')=a$ and $
\text{ord}_E(y')=b$. 
As $E$ is $G$-invariant, for any $g\in G$, $\text{ord}_E(g^*(x'))=a$ and $
\text{ord}_E(g^*(y'))=b$.
Suppose that 
$y'-ux-vy\in \mathfrak{m}^2$ for some $u, v\in \mathbb{C}$. Without loss of generality, we may assume that $v\neq 0$.
Now consider $$y_0=\sum_{g\in G}\frac{1}{\mu_g}g^*(y').$$
It is easy to check that $y_0\in \mathfrak{m}\setminus \mathfrak{m}^2$ and
$g^*(y_0)=\mu_g y_0$ for all $g\in G$. Also we have $$\text{ord}_E(y_0)\geq \min\{\text{ord}_E(g^*(y'))\mid g\in G\}=b.$$
So $\text{ord}_E(y_0)=b$ by the maximality of $b$.
On the other hand, it is easy to see that $$\min\{\text{ord}_E(x), \text{ord}_E(y)\}= \min\{\text{ord}_E(x'), \text{ord}_E(y')\}=a, $$
as $\text{ord}_E$ is the monomial valuation determined by $\text{ord}_E(x')$ and $\text{ord}_E(y')$.
So either $\text{ord}_E(x)=a$ or $\text{ord}_E(y)=a$. 
Then by the proof of \cite[Theorem~1]{Kaw17}, $\pi$ is a weighted blow-up in the $G$-invariant local coordinate system $(x, y_0)$ or $(y, y_0)$ with weight $(a, b)$.
\end{proof}

Here we recall the equivariant version of the ``tie breaking" method.
\begin{prop}[{cf. \cite[Proposition~8.7.1]{Cor07}}]\label{prop unique E}
Let $X$ be a quasi-projective variety or a quasi-projective non-singular $R$-variety over a formal power series ring $R$.
Let $G$ be a finite group acting on $X$. Let $(X, \Delta_1)$ be a $G$-invariant klt pair and $\Delta_2$ a $G$-invariant effective $\mathbb{Q}$-Cartier divisor such that $(X, \Delta_1+\Delta_2)$ is lc. Let $Z \subseteq X$ be a minimal non-klt center of $(X, \Delta_1+\Delta_2)$ which is $G$-invariant. Then there exists a $G$-invariant effective $\mathbb{Q}$-Cartier divisor $\Delta'_2$ such that 
\begin{enumerate}
\item $Z$ is an isolated non-klt center of $(X, \Delta_1+\Delta'_2)$, and
\item there is a unique non-klt place of $(X, \Delta_1+\Delta'_2)$ with center $Z$, and it is also a non-klt place of $(X, \Delta_1+\Delta_2)$.
\end{enumerate} 
\end{prop}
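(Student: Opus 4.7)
The plan is to run the standard non-equivariant tie-breaking of \cite[Proposition~8.7.1]{Cor07}, ensuring that every auxiliary divisor introduced is $G$-invariant. Two equivariant ingredients suffice: a supply of $G$-invariant ample $\mathbb{Q}$-divisors on $X$, and a Bertini-type freedom inside $G$-invariant linear subsystems. Since $G$ is finite and $X$ is quasi-projective, averaging $A:=\sum_{g\in G}g^*A_0$ turns any ample Cartier $A_0$ into a $G$-invariant ample divisor, which carries a $G$-linearization after passing to a multiple; complete reducibility of finite-group representations in characteristic zero together with Serre vanishing then makes the restriction $H^0(X,\mathcal{O}_X(mA))^G\to H^0(W,\mathcal{O}_W(mA))^G$ surjective for $m\gg 0$ and any $G$-invariant closed subscheme $W\subset X$. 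Consequently, general $G$-invariant sections of $\mathcal{O}_X(mA)$ can be chosen to impose or avoid prescribed vanishing conditions along $G$-invariant subschemes. Equivariant log resolutions are available in both the algebraic and the formal $R$-variety setting.

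First I would isolate $Z$ among the non-klt centers. Let $f\colon Y\to X$ be a $G$-equivariant log resolution of $(X,\Delta_1+\Delta_2)$, and let $\mathcal{E}$ denote the $G$-stable finite set of prime divisors on $Y$ with $a(E,X,\Delta_1+\Delta_2)=0$; partition it as $\mathcal{E}=\mathcal{E}_Z\sqcup \mathcal{E}_Z^c$ according to whether the center is $Z$ or not. By the equivariant Bertini above, choose a general $G$-invariant effective $\mathbb{Q}$-divisor $\Gamma\sim_{\mathbb{Q}}\epsilon A$ for a small rational $\epsilon>0$ such that $\mult_E f^*\Gamma>0$ for every $E\in\mathcal{E}_Z^c$, while $\mult_{E'} f^*\Gamma<\mult_{E'} f^*\Delta_2$ for every $E'\in\mathcal{E}_Z$. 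Set $\Delta''_2:=(1-\alpha)\Delta_2+\alpha\Gamma$ for a small rational $\alpha>0$. By linearity of log discrepancies in the boundary, $a(E,X,\Delta_1+\Delta''_2)=(1-\alpha)a(E,X,\Delta_1+\Delta_2)+\alpha\, a(E,X,\Delta_1+\Gamma)$; the right-hand side is strictly positive for $E\in\mathcal{E}_Z^c$ (hence those $E$ exit the non-klt locus) and remains zero for some $E'\in\mathcal{E}_Z$. After an additional small $G$-invariant generic perturbation that cancels any spurious non-klt centers coming from $\Gamma$, the divisor $\Delta''_2$ is $G$-invariant and effective, and $Z$ is an isolated non-klt center of $(X,\Delta_1+\Delta''_2)$.

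Next I would single out a unique non-klt place over $Z$ by tie-breaking among the $G$-permuted set $\mathcal{E}'_Z\subset\mathcal{E}_Z$ of places that survive Step~1. Pick another general $G$-invariant effective $\mathbb{Q}$-divisor $H\sim_\mathbb{Q}\epsilon' A$, arranged so that the ratios $\mult_{E'}(f^*H)/\mult_{E'}(f^*\Delta''_2)$ take pairwise distinct values on distinct $G$-orbits of $\mathcal{E}'_Z$; this is possible because each required inequality cuts out a proper subset of the $G$-invariant subsystem $H^0(X,\mathcal{O}_X(mA))^G$ and there are only finitely many of them. Taking $\Delta'_2:=(1-\beta)\Delta''_2+\beta H$ for the largest rational $\beta>0$ keeping the pair lc, exactly one $G$-orbit of $\mathcal{E}'_Z$ remains a non-klt place; the separation of ratios forces this orbit to consist of a single divisor, which is then automatically $G$-invariant. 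Since every perturbation is small and generic, any surviving non-klt place was already a non-klt place of $(X,\Delta_1+\Delta_2)$, establishing (2). The main obstacle is checking that the two genericity conditions—on $\Gamma$ for isolating $Z$, and on $H$ for separating the $G$-orbits of non-klt places—can simultaneously be realized inside $H^0(X,\mathcal{O}_X(mA))^G$; this reduces to the Serre vanishing plus complete reducibility statement recorded in the first paragraph, and all remaining numerical tuning is identical to the non-equivariant proof.
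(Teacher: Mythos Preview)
Your outline follows the standard tie-breaking template, and the first step (isolating $Z$ among the non-klt centers using a $G$-invariant auxiliary divisor) is essentially correct, even if some of the sign/inequality bookkeeping is loose. The real problem is in the second step.

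You say that after perturbing by a $G$-invariant $H$ whose ratios $\mult_{E'}(f^*H)/\mult_{E'}(f^*\Delta''_2)$ separate the $G$-orbits in $\mathcal{E}'_Z$, the surviving orbit ``consists of a single divisor.'' But separating orbits is all a $G$-invariant $H$ can ever do: since $H$ and $\Delta''_2$ are $G$-invariant, these ratios are constant along each orbit, so the tie-breaking leaves you with a full $G$-orbit $\{g(E_0)\mid g\in G\}$ of non-klt places, and nothing in your argument forces this orbit to be a singleton. This is precisely the point where the equivariant statement is genuinely harder than the non-equivariant one, and your proposal skips it.

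The paper fills this gap with two extra moves that your outline is missing. First, after isolating $Z$, it passes to a further $G$-equivariant blow-up along $\bigcup_{g\in G} g(T)$ (for $T$ a minimal non-klt center on the resolution) to produce a non-klt place $E_0$ with the property that $g(E_0)\cap E_0$ is either empty or all of $E_0$ for every $g\in G$; it also uses an ample irreducible component $E_a$ upstairs to perturb so that the non-klt places are exactly $\{g(E_0)\}$. Second, and crucially, it invokes the connectedness lemma (\cite[Theorem~5.48]{KM98}, \cite[Theorem~3.1]{Kaw15}): since $Z$ is an isolated non-klt center, the non-klt locus over $Z$ has connected fibers, but the $g(E_0)$ are pairwise disjoint by construction, so they must all coincide. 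That is what forces the orbit to be a single $G$-invariant divisor. Without this connectedness argument (or something equivalent), your conclusion in Step~2 does not follow.
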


\begin{proof}
When $G$ is trivial this is \cite[Proposition~8.7.1]{Cor07}. 
We explain how to modify the proof of \cite[Proposition~8.7.1]{Cor07}.
Take $D$ to be an effective $G$-invariant divisor on $X$ such that $Z$ is the only non-klt center contained in $D$. Let $f: X'\to X$ be a $G$-equivariant log resolution of $(X, \Delta_1+\Delta_2+D)$ (\cite{AW97, Tem08}). Then by the proof of \cite[Proposition~8.7.1]{Cor07}, we can find positive rational numbers $\epsilon<1$ and $\eta$ such that $Z$ is the only non-klt center of $(X, \Delta_1+(1-\epsilon)\Delta_2+\eta D)$ and all the non-klt places are also non-klt places of $(X, \Delta_1+\Delta_2)$. Denote $\Delta_3= (1-\epsilon)\Delta_2+\eta D$.

Then by the proof of \cite[Proposition~8.7.1]{Cor07}, we can find an effective $G$-invariant divisor $D'$ on $X$ such that $f^*D'$ is simple normal crossing and there is one irreducible component $E_a$ of $f^*D'$ which is ample on $X'$. By the proof of \cite[Proposition~8.7.1]{Cor07}, we can find positive rational numbers $\epsilon'<1$ and $\eta'$ such that $Z$ is the only non-klt center of $(X, \Delta_1+(1-\epsilon')\Delta_3+\eta' D')$ and all the non-klt places are also non-klt places of $(X, \Delta_1+\Delta_2)$.

Write $K_{X'}+\Delta'=f^*(K_X+\Delta_1+(1-\epsilon')\Delta_3+\eta' D')$. Take $T$ to be a minimal non-klt center of $({X'}, \Delta')$, then for any $g\in G$, $g(T)\cap T$ is either $\emptyset$ or $T$.
Blowing up $X'$ along $\cup_{g\in G}g(T)$, we get a birational morphism $f': X''\to X'$ and a new $G$-equivariant log resolution $f'': X''\to X$ with a divisor $E_0=f'^{-1}(T)$ such that $E_0$ is a non-klt place of $(X, \Delta_1+(1-\epsilon')\Delta_3+\eta' D')$ and
$g(E_0)\cap E_0$ is either $\emptyset$ or $E_0$ for any $g\in G$. Now note that $\sum_{g\in G}(f'^*g(E_a)-tg(E_0))$ is ample for sufficiently small positive number $t$. So by the proof of \cite[Proposition~8.7.1]{Cor07}, we can perturb the coefficients of $f''^*D'$ to get an effective $G$-invariant divisor $D''\sim_{\mathbb{Q}}D'$ so that there are positive rational numbers $\epsilon''<1$ and $\eta''$ such that $Z$ is the only non-klt center of $(X, \Delta_1+(1-\epsilon'')\Delta_3+\eta'' D'')$ and $\{g(E_0)\mid g\in G\}$ is the set of non-klt places of $(X, \Delta_1+(1-\epsilon'')\Delta_3+\eta'' D'')$.
Now by the connectedness lemma (\cite[Theorem~5.48]{KM98}, \cite[Theorem~3.1]{Kaw15}), $\cup_{g\in G}g(E_0)\to Z$ has connected geometric fibers, which means that $\{g(E_0)\mid g\in G\}=\{E_0\}$ consists of a unique non-klt place, which is also a non-klt place of $(X, \Delta_1+\Delta_2)$ by the construction. 
\end{proof}

\begin{proof}[Proof of Theorem~\ref{Smooth local case general alpha version}]
If $I\leq 1$, then $(X\ni P,B+C)$ is lc by \cite[Corollary~5.57]{KM98}. Hence we may assume that $I>1$.
By assumption, $0<m-\frac{m}{I}\leq \frac{1}{2}$. We may take an integer $n\geq 2$ and a real number $0\leq \epsilon<1$ such that $m-\frac{m}{I}=\frac{1-\epsilon}{n}$.
We need to show that $(X\ni P, {B}+(1-\frac{1}{n}+\frac{\epsilon}{n}) C)$ is lc.
As being lc is a closed condition for coefficients, by slightly modifying the coefficients of $B$, we may assume that $B$ is a $\mathbb{Q}$-divisor and $\epsilon$ is a rational number.
We may assume that $(P\in X)\simeq (o\in \widehat{\mathbb{C}^2})$ is the formal neighborhood with coordinates ${x, y}$ and $C=(x=0)$. 

Consider the finite covering $\mu: \widehat{\mathbb{C}^2}\to \widehat{\mathbb{C}^2}$ defined by $(x, y)\mapsto (x^n, y )$ of degree $n$ ramified along $C$. Then 
$$
K_{\widehat{\mathbb{C}^2}}+\mu^*{B}+\epsilon C= \mu^*\left(K_{\widehat{\mathbb{C}^2}}+{B}+(1-\frac{1}{n}+\frac{\epsilon}{n}) C\right).
$$
By \cite[Proposition~5.20]{KM98}, $({\widehat{\mathbb{C}^2}}\ni o, {B}+(1-\frac{1}{n}+\frac{\epsilon}{n}) C)$ is lc if and only if $({\widehat{\mathbb{C}^2}}\ni o, \mu^*{B}+\epsilon C)$ is lc. Here the pair $({\widehat{\mathbb{C}^2}}, \mu^*{B}+\epsilon C)$ admits a natural $(\mathbb{Z}/n\mathbb{Z})$-action induced by $\mu$.
We will apply Theorem~\ref{varcenko theorem} to show that $({\widehat{\mathbb{C}^2}}\ni o, \mu^*{B}+\epsilon C)$ is lc, or equivalently, $\lct({\widehat{\mathbb{C}^2}}\ni o; \mu^*{B}+\epsilon C)\geq 1$.

Fix a $(\mathbb{Z}/n\mathbb{Z})$-invariant local coordinate system $(x', y')$ and fix weights $w(x'), w(y')$. Possibly switching $x'$ and $y'$ and rescaling, we may write $x'=x(1+h_1(x^n, y))$ and $y'=y+h_2(x^n, y)$ for some $h_1, h_2\in \mathbb{C}[[x, y]]$ with $\mult_o h_1(x^n, y)\geq 1$ and $\mult_o h_2(x^n, y)\geq 2$. As $1+h_1(x^n, y)$ is a unit in $\mathbb{C}[[x, y]]$, there exists a unit $u\in \mathbb{C}[[x', y']]$ such that $x=ux'$ and $w(x)=w(x')$ where $w(x)$ is the weight of $x$ with respect to $w(x'),w(y')$. 

Let $y_0=y+h_2(x, y)$. Then $\mu^*y_0=y'$ and $(x, y_0)$ is a local coordinate system. 
Suppose that $B=\sum_{i=1}^k b_i B_i$ for irreducible divisors $B_i$, and the equation of $B$ in the coordinates $(x, y_0)$ is of the form $\prod_{i=1}^k f_{i}(x,y_0)^{b_i}=0$. By \cite[\S 1, Exercise~5.14]{GTM52}, we may write $f_i(x,y_0)=(\alpha_i x+\beta_i y_0)^{m_i}+g_i$ for some $(\alpha_i, \beta_i)\neq (0,0)$, $m_i\in \mathbb{Z}_{>0}$, and $\mult_o g_i>m_i$. Denote by $I_i$ the minimal $k$ such that $y_0^k$ has non-zero coefficient in $f_i(x, y_0)$. Note that $I_i$ is well-defined as $C\not \subseteq \Supp B$, and we have $I_i\geq m_i$.
Then $$I=(B\cdot C)_o=\sum_{i=1}^kb_i I_i , \quad m=\mult_o B=\sum_{i=1}^kb_i m_i. $$
By assumption, $\sum_{i=1}^kb_i m_i\leq 1$, and 
\begin{align*}
 \left(1-\frac{1}{I}\right)\sum_{i=1}^kb_i m_i{}=\frac{ 1-\epsilon}{n}.
\end{align*}
In this setting, the equation of $\mu^{*}B+\epsilon C$ in $(x, y')$ is $(f=0)$ where $$f(x, y')=x^\epsilon\prod_{i=1}^k f_{i}(x^n,y')^{b_i}.$$
Let $w(f_i(x^n, y' ))$ be the weight of $f_i(x^n, y' )$ with respect to $w(x),w(y')$. Note that for each $i$, $$w(f_i(x^n, y'))\leq \begin{cases}\min\{nm_iw(x), I_iw(y')\} & \text{if } \alpha_i\neq 0;\\
m_iw(y') & \text{if } \alpha_i= 0.
\end{cases}$$

Write 
$g(x',y')=f(ux',y')$. Then the equation of $\mu^{*}B+\epsilon C$ in $(x', y')$ is $(g=0)$. Let $w(f)$ be the weight of $f(x,y')$ with respect to $w(x),w(y')$, and let $w(g)$ be the weight of $g(x',y')$ with respect to $w(x'),w(y')$.

If $nw(x)\leq w(y')$, then $w(f_i(x^n, y' ))\leq m_iw(y')$. Hence by Lemma~\ref{lem: replace weight},
$$
w(g)=w (f)\leq \epsilon w(x)+\sum_{i=1}^k b_i m_iw(y')\leq \epsilon w(x)+ w(y')
\leq w(x')+ w(y').$$

If $nw(x)> w(y')$, then $w(f_i(x^n, y' ))\leq (1-\frac{1}{I})nm_iw(x)+ \frac{1}{I}I_iw(y') $. Hence by Lemma~\ref{lem: replace weight},
$$
w(g)=w(f)\leq \epsilon w(x)+\sum_{i=1}^k b_i\left(\left(1-\frac{1}{I}\right)nm_iw(x)+ \frac{1}{I}I_iw(y') \right)= w(x')+ w(y').
$$

Hence by Theorem~\ref{varcenko theorem}, $({\widehat{\mathbb{C}^2}}\ni o, \mu^*{B}+\epsilon C)$ is lc.
\end{proof}

The following lemma is elementary on change of coordinates. We omit the proof.

\begin{lem}\label{lem: replace weight}
Suppose that $f(x,y')\in \mathbb{C}[[x,y']]$ and $x=ux'$ for some unit $u\in \mathbb{C}[[x',y']]$. Fix weights $w(x'),w(y')$. Consider $g(x',y')=f(ux',y')\in \mathbb{C}[[x',y']]$. Then $w(x)=w(x')$ and $w(f)=w(g)$, where $w(x)$ is the weight of $x$ with respect to $w(x'),w(y')$, $w(f)$ is the weight of $f(x,y')$ with respect to $w(x),w(y')$, and $w(g)$ is the weight of $g(x',y')$ with respect to $w(x'),w(y')$.
\end{lem}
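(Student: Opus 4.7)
My plan is to exploit that $u$ is a unit, so $u_0 := u(0,0) \neq 0$, and verify both equalities by comparing weighted leading forms.

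First, I would handle $w(x) = w(x')$. Writing $u = u_0 + u'$ with $u'$ having no constant term, we have $x = u_0 x' + u'(x',y') x'$. Every monomial of $u'(x',y') x'$ has weight strictly larger than $w(x')$ with respect to $w(x'), w(y')$, while the term $u_0 x'$ has weight exactly $w(x')$ and nonzero coefficient. Hence the lowest-weight term of $x$, viewed as an element of $\mathbb{C}[[x',y']]$, is $u_0 x'$, giving $w(x) = w(x')$.

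Next I would prove $w(f) = w(g)$. Expand $f(x,y') = \sum_{i,j} a_{ij} x^i (y')^j$; then
$$g(x',y') = f(ux',y') = \sum_{i,j} a_{ij}\, u(x',y')^i\, (x')^i (y')^j.$$
Using $w(x) = w(x')$, set $W := w(f) = \min\{\, i w(x') + j w(y') : a_{ij} \neq 0 \,\}$. The term $a_{ij} u^i (x')^i (y')^j$ contributes monomials only of weight $\geq i w(x') + j w(y')$, with the minimum weight achieved solely by its leading part $a_{ij} u_0^i (x')^i (y')^j$. Thus the weight-$W$ homogeneous component of $g$ equals
$$\sum_{\,i w(x') + j w(y') = W\,} a_{ij} u_0^i (x')^i (y')^j,$$
a sum of distinct monomials with $u_0^i \neq 0$ and with at least one $a_{ij} \neq 0$; so it is nonzero. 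This shows $w(g) \leq W = w(f)$ and that no lower-weight monomial can appear (since every term in $g$ has weight $\geq$ the weight of the corresponding $(i,j)$-term in $f$), giving $w(g) \geq w(f)$ as well.

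The argument is essentially formal and presents no obstacle; the only subtle point is checking that the leading terms from distinct $(i,j)$ do not cancel, which is automatic because they involve distinct monomials $(x')^i (y')^j$. Alternatively, symmetry (applying the same reasoning to the inverse substitution $x' = u^{-1} x$, where $u^{-1} \in \mathbb{C}[[x',y']] \simeq \mathbb{C}[[x,y']]$ is again a unit) gives the reverse inequality $w(f) \leq w(g)$ for free.
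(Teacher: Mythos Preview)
Your argument is correct. The paper explicitly omits the proof of this lemma, stating only that it ``is elementary on change of coordinates''; your write-up supplies exactly the expected elementary verification, and the point you flag about non-cancellation of the weight-$W$ leading terms (distinct monomials $(x')^i(y')^j$) is the only place where one could conceivably slip.
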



\section{Bounding log canonical thresholds by Newton polytopes}\label{appendix: B second proof}
In this appendix, we will provide a self-contained proof of Corollary~\ref{cor lct B C 1}(a)(c), and thus Theorem~\ref{Smooth local case general beta version}, using Newton polytopes. The proof is inspired by \cite{Var76, KSC04, Col18}.

\begin{defn}
Let $f(x,y)\in\mathbb{C}[[x,y]]$ be a non-zero formal power series, we may write $$f(x,y)=\sum_{(p,q)\in\mathbb{Z}^2_{\geq 0}}a_{pq}x^p y^q.$$ 
\begin{enumerate}
 \item The \emph{Newton polytope} of $f$, denoted by $\NP(f)$, is the convex hull of $$\bigcup_{a_{pq}\neq 0}\left((p,q)+\mathbb{R}^2_{\geq 0}\right)$$
in $\mathbb{R}^2_{\geq 0}.$

\item 
The \emph{Newton distance} of $f$ 
is defined by $$\ndist{(f)}:=\sup\{t\in \mathbb{R}_{> 0}\mid (1,1)\in t\cdot \NP(f)\}.$$

\item \label{the main face and data}
The \emph{main face} $\mf(f)$ of $f$ is defined to be the minimal face of $\NP(f)$ containing $(\ndist{(f)}^{-1}, \ndist{(f)}^{-1})$.
Then $\mf(f)$ is either a 1-dimensional face or a vertex on the boundary of $\NP(f)$.
We define the \emph{Newton multiplicity} $\nmul(f)$ in the following way:
\begin{itemize}
 \item If either $\mf(f)$ is a vertex or $\mf(f)$ is not compact, then we define $\nmul(f)=\ndist{(f)}^{-1}$.
 \item If $\mf(f)$ is compact of dimension $1$ and its two vertices are denoted by $(p_1, q_1)$ and $(p_2, q_2)$ with $p_1<p_2$, then we define $\nmul(f):=\gcd(p_2-p_1, q_1-q_2)$.
 \end{itemize}
 
\item For a Cartier divisor $D$ on $\widehat{\mathbb{C}^2}$, choose local coordinates $(x,y)$ at $o\in\mathbb{C}^2$ and suppose that ${D}$ is defined by $(f=0)$ for some $f\in\mathbb{C}[[x,y]]$. We define the Newton polytope of ${D}$ to be $\NP({D}):=\NP(f)$, similarly we define $\ndist{({D})}:=\ndist{(f)}$ and $\nmul({D}):=\nmul(f)$. Note that all above definitions do not depend on the choice of $f$ up to a unit in $\mathbb{C}[[x,y]]$. However, all above definitions do depend on the choice of the coordinates $(x,y)$.
\end{enumerate}
\end{defn}

We collect some easy facts on weighted blow-ups.

\begin{lem}\label{lem wt blowup}
Let $a_1, a_2$ be two coprime positive integers.
Let $\pi: Y\to \mathbb{C}^2$ be the weighted blow-up at $o$ with coordinates $(x, y)$ and weight $(a_1, a_2)$. Then $Y\subset \mathbb{C}^2_{x,y}\times \mathbb{P}^1_{z, w}$ is defined by $(x^{a_2}w=y^{a_1}z)$ and the exceptional divisor $E\cong \mathbb{P}^1_{z, w}$. Denote by $D_1$ and $D_2$ the divisors on $\mathbb{C}^2$ defined by $(x=0)$ and $(y=0)$ respectively, and denote $D'_1$, $D'_2$ the strict transforms on $Y$. Denote $P_1=[0:1]$ and $P_2=[1:0]$ on $E$. Then
\begin{enumerate}
 \item $\pi^*K_{\mathbb{C}^2}=K_Y+(1-a_1-a_2)E$;
 \item $\pi^*D_i=D'_i+a_iE$ for $i=1, 2$;
 \item $(K_Y+E)|_E=K_E+(1-\frac{1}{a_2})P_1+(1-\frac{1}{a_1})P_2$;
 \item Suppose that $D$ is a divisor on $\mathbb{C}^2$ defined by $(f=0)$, take $f_w$ to be the weighted homogenous leading term of $f$, then we may write $f_w(x, y)=x^sy^th(x^{a_2}, y^{a_1})$ for some homogeneous polynomial $h$ of degree $d$. Denote $D'$ to be the strict transform of $D$ on $Y$. Then $\pi^*D=D'+(sa_1+ta_2+a_1a_2d)E$, and 
 $$D'|_E=\frac{s}{a_2}P_1+\frac{t}{a_1}P_2+G$$
 where $G$ is defined by $(h(z, w)=0)$ on $E$.
\end{enumerate}
 
\end{lem}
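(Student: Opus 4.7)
The plan is to verify all four assertions chart by chart, using the fact that (since $\gcd(a_1, a_2) = 1$) $Y$ admits a covering by two affine cyclic quotient charts. Set $V_w := Y \cap \{w \neq 0\}$ and $V_z := Y \cap \{z \neq 0\}$; passing to the normalization of each chart, one has finite covers
\[
\mathrm{cov}_w : \mathbb{C}^2_{\xi, \eta} \to V_w,\quad (\xi, \eta) \mapsto (\xi^{a_1}\eta,\, \xi^{a_2},\, \eta^{a_2}) \text{ in $(x, y, z)$-coordinates,}
\]
\[
\mathrm{cov}_z : \mathbb{C}^2_{\xi, \eta} \to V_z,\quad (\xi, \eta) \mapsto (\xi^{a_1},\, \xi^{a_2}\eta,\, \eta^{a_1}) \text{ in $(x, y, w)$-coordinates,}
\]
realizing $V_w \cong \mathbb{C}^2 / \tfrac{1}{a_2}(1, -a_1)$ and $V_z \cong \mathbb{C}^2 / \tfrac{1}{a_1}(1, -a_2)$; coprimality of $(a_1, a_2)$ makes the cyclic actions free away from the origin, so each cover is \'etale in codimension one. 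The exceptional divisor $E$ is the image of $\{\xi = 0\}$ in each chart with Weil multiplicity one, and $P_1$ (resp.\ $P_2$) is the image of the origin of the cover of $V_w$ (resp.\ $V_z$). All assertions can be checked by further pulling back to the smooth cover.

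For (2), pulling the equation $x$ of $D_1$ back along $\mathrm{cov}_w$ yields $\xi^{a_1}\eta$, whose divisor is $a_1\{\xi = 0\} + \{\eta = 0\}$; since $\{\xi = 0\}$ and $\{\eta = 0\}$ cover $E$ and $D_1'$ with multiplicity one, descending gives $\pi^* D_1 = a_1 E + D_1'$ on $V_w$. The statement on $V_z$ (where $D_1'$ is absent) and the symmetric statement for $D_2$ are analogous. For (1), $\mathrm{cov}_w^*\pi^*(dx \wedge dy) = -a_2 \xi^{a_1+a_2-1}\,d\xi \wedge d\eta$, so $\mathrm{cov}_w^* \pi^* K_{\mathbb{C}^2}$ has divisor $(a_1+a_2-1)\{\xi = 0\}$, while $\mathrm{cov}_w^* K_{V_w} = K_{\mathbb{C}^2_{\xi, \eta}}$ vanishes since the cover is unramified in codimension one; reading off the coefficient of $E$ in $K_Y - \pi^* K_{\mathbb{C}^2}$ gives (1).

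For (4), a direct substitution using the weighted homogeneity of $f_w$ (with weights $w(x) = a_1$, $w(y) = a_2$ and weighted degree $N := sa_1 + ta_2 + a_1 a_2 d$) yields
\[
f(\xi^{a_1}\eta,\, \xi^{a_2}) = \xi^{N}\bigl(\eta^s h(\eta^{a_2}, 1) + \xi \cdot r(\xi, \eta)\bigr)
\]
on the cover of $V_w$, where $\xi \cdot r$ absorbs all monomials of $f - f_w$, which have weighted degree strictly greater than $N$. Hence the coefficient of $E$ in $\pi^* D$ equals $N$, and the restriction $D'|_{E \cap V_w}$ is computed on the cover by the polynomial $\eta^s h(\eta^{a_2}, 1)$ on $\{\xi = 0\}$. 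Pushing down through the degree-$a_2$ quotient $\{\xi = 0\} \to E \cap V_w$, $\eta \mapsto z = \eta^{a_2}$ (ramified at $P_1$ with index $a_2$): $\eta^s$ descends to $(s/a_2) P_1$, while $h(\eta^{a_2}, 1)$ descends to $h(z, 1)$. The analogous computation on $V_z$ yields $(t/a_1)P_2 + (h(1, w) = 0)$, and gluing these over $E = \mathbb{P}^1_{z, w}$ gives $D'|_E = (s/a_2)P_1 + (t/a_1)P_2 + G$ with $G = (h(z, w) = 0)$.

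Finally, property (3) follows by combining (1), (2), (4) with the toric identity $K_Y = -(D_1' + E + D_2')$ (equivalently, by direct computation of the different at the cyclic quotient points): $K_Y + E = -D_1' - D_2'$ restricts via (4) to $-(1/a_2)P_1 - (1/a_1)P_2$, which agrees with $K_E + (1 - 1/a_2)P_1 + (1 - 1/a_1)P_2$ upon choosing $K_E = -P_1 - P_2$ as the canonical representative on $\mathbb{P}^1_{z, w}$. The only real bookkeeping hazard is tracking how Weil-divisor pullback interacts with the ramification of the cyclic covers at the fixed points $P_1$ and $P_2$ --- in particular, the $1/a_i$ factors in (4) arise from dividing by the ramification indices there --- but once this is in place each assertion reduces to an elementary chart computation.
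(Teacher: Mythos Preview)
Your proof is correct and follows essentially the same route as the paper's: both treat (1), (2), (4) via direct toric/chart computation and obtain (3) from the toric boundary identity together with the restrictions $D_i'|_E$. The paper is terse---it cites \cite{Pro01} for (1)--(2), says (4) is ``by direct computation'', and for (3) computes $D_i'|_E$ via the intersection number $-E^2=\tfrac{1}{a_1a_2}$ rather than as a special case of (4)---whereas you spell out the cyclic-quotient chart covers explicitly and track the ramification at $P_1,P_2$; this is the same argument with the details filled in.
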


\begin{proof}
(1) and (2) are from \cite[Lemma 3.2.1]{Pro01}.
For (3), note that local computation (or toric geometry) gives
$$
(K_Y+D'_1+D'_2+E)|_E=K_E+P_1+P_2.
$$
On the other hand, by (2) and $-E^2=\frac{1}{a_1a_2}$, we have $D'_1|_E=\frac{1}{a_2}P_1$ and $D'_2|_E=\frac{1}{a_1}P_2$. This implies (3). (4) is by direct computation.
\end{proof}

\begin{lem}\label{the criterion lemma for attaining lct}
For any Cartier divisor ${D}$ on $\widehat{\mathbb{C}^2}$ with coordinates $(x,y)$, $$\ndist{({D})}\geq \lct(\widehat{\mathbb{C}^2}\ni o;{D})\ge \min\{\frac{1}{\nmul({D})},\ndist{({D})}\}$$ In particular, if $\ndist({D})\nmul({D})\leq 1$, then $\lct(\widehat{\mathbb{C}^2}\ni o;{D})=\ndist{({D})}$.

\end{lem}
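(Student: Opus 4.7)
The plan is to analyze the weighted blow-up associated to the main face of $D$. Writing $D=(f=0)$, assume first the generic case that $\mf(f)$ is a compact $1$-dimensional face with vertices $(p_1,q_1)$, $(p_2,q_2)$, $p_1 < p_2$. Set $d := \nmul(D) = \gcd(p_2-p_1, q_1-q_2)$, $a_1 := (q_1-q_2)/d$, $a_2 := (p_2-p_1)/d$, so that $(a_1, a_2)$ are coprime positive integers, and let $\pi\colon Y \to \widehat{\mathbb{C}^2}$ be the weighted blow-up with weight $(a_1,a_2)$ and exceptional divisor $E\cong\mathbb{P}^1$. The $(a_1,a_2)$-weighted valuation $v(f) = a_1 p_1 + a_2 q_1$ is attained exactly on $\mf(f)$, and since $(1/\ndist(D), 1/\ndist(D)) \in \mf(f)$ one obtains $v(f) = (a_1+a_2)/\ndist(D)$.

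The upper bound $\lct(\widehat{\mathbb{C}^2}\ni o; D) \le \ndist(D)$ is then immediate: by Lemma~\ref{lem wt blowup}(1)(2) the log discrepancy of $E$ is $a_1+a_2$ and $\mult_E\pi^{*}D = v(f)$, whence $\lct \le (a_1+a_2)/v(f) = \ndist(D)$.

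For the lower bound, I would set $b := \min\{1/\nmul(D), \ndist(D)\}$ and write
$$\pi^{*}(K_{\widehat{\mathbb{C}^2}} + bD) = K_Y + bD' + cE, \qquad c := bv(f) - (a_1+a_2) + 1 \le 1.$$
Since $c\le 1$, it suffices to show that $(Y, bD'+E)$ is lc near $E$: that implies $(Y, bD'+cE)$ is sub-lc, and pushing down gives $(\widehat{\mathbb{C}^2}, bD)$ lc, hence $\lct \ge b$. By adjunction on the smooth rational curve $E$ together with Lemma~\ref{lem wt blowup}(3)(4), the adjoint divisor on $E$ equals
$$K_E + \Bigl(1+\tfrac{bp_1-1}{a_2}\Bigr) P_1 + \Bigl(1+\tfrac{bq_2-1}{a_1}\Bigr) P_2 + bG,$$
where $G$ is cut out on $E$ by a binary form of degree $d$. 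The conditions $bp_1\le 1$ and $bq_2\le 1$ follow from $b\le\ndist(D)$ together with $p_1\le 1/\ndist(D)$ and $q_2\le 1/\ndist(D)$ (both extremal coordinates of the segment $\mf(f)$); every root of the binary form has multiplicity at most $d=\nmul(D)$, so the coefficients of $bG$ are bounded by $bd\le 1$. Thus the adjoint divisor has all coefficients $\le 1$, and by inversion of adjunction on the surface $Y$ this yields that $(Y, bD'+E)$ is lc near $E$, giving $\lct \ge b$.

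The main obstacle is the orbifold structure of $Y$ at $P_1$ and $P_2$, which carry cyclic quotient singularities of orders $a_2$ and $a_1$ respectively; the Different terms $(1-1/a_j)$ at $P_j$ must combine correctly with the fractional contributions $p_1/a_2$ and $q_2/a_1$ coming from $D'|_E$ to collapse into the clean conditions $bp_1\le 1$, $bq_2\le 1$ that the shape of $\mf(f)$ already encodes, and this orbifold adjunction is exactly what Lemma~\ref{lem wt blowup}(3)(4) packages. The degenerate cases when $\mf(f)$ is a vertex or non-compact correspond to $\ndist(D)\nmul(D)=1$, and the same blow-up argument applies with $d=1$ and $G=0$, yielding $\lct=\ndist(D)$. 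In particular, when $\ndist(D)\nmul(D)\le 1$ one has $b=\ndist(D)$, so the upper and lower bounds coincide and $\lct(\widehat{\mathbb{C}^2}\ni o; D) = \ndist(D)$, giving the last assertion.
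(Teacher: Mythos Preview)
Your argument is essentially the paper's own: take the weighted blow-up along the direction of the main face, read off the upper bound from the log discrepancy of $E$, and obtain the lower bound by computing the different on $E$ via Lemma~\ref{lem wt blowup}(3)(4) and invoking inversion of adjunction \cite[Theorem~5.50]{KM98}. Two technical points you glide over that the paper handles explicitly: first, the paper begins by approximating $f\in\mathbb{C}[[x,y]]$ by a polynomial (via \cite{Kol08} or \cite{dFM09}) so that Lemma~\ref{lem wt blowup} and inversion of adjunction can be applied on an honest variety rather than a formal scheme; second, the degenerate cases are \emph{not} uniformly covered by ``the same blow-up argument with $d=1$ and $G=0$''---when $\mf(f)$ is a vertex one must choose an auxiliary coprime weight isolating that vertex (and then $h$ has degree $0$, not $1$), and when $\mf(f)$ is non-compact no finite weight captures the face, so the paper instead factors $f=x^{p}(y^{b}+xh)$ and concludes directly from \cite[Corollary~5.57]{KM98}.
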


\begin{proof}
Suppose that ${D}$ is defined by $(f=0)$ for some $f\in\mathbb{C}[[x,y]]$.
By 
{\cite[Theorem 32]{Kol08} or \cite[Proposition 2.5]{dFM09}},
there exists a positive integer $N$ such that for any $\widetilde{f}\in \mathbb{C}[x,y]$ such that $\mult_o(f-\widetilde{f})\geq N$, we have $$\lct(\mathbb{C}^2\ni o;\widetilde{f})=\lct(\widehat{\mathbb{C}^2}\ni o; \widetilde{f})=\lct (\widehat{\mathbb{C}^2}\ni o; f).$$ We can take such $\widetilde{f}$ so that $\NP(\widetilde{f})=\NP(f)$, and it suffices to prove the claim for the Cartier divisor $D$ defined by $(\widetilde{f}=0)$ near $o\in \mathbb{C}^2$. Thus by replacing $f$ with $\widetilde{f}$, we may assume that $f\in \mathbb{C}[x,y]$ and treat $\lct({\mathbb{C}^2}\ni o;{D})$.

If $\mf(D)$ is not compact, then possibly switching $x$ and $y$ and rescaling, we may assume that $f=x^{\ndist{(D)}^{-1}}(y^b+xh(x,y))$ for some $h\in \mathbb{C}[x,y]$ such that $b\leq \ndist{(D)}^{-1}$. Note that $\ndist{(D)}D=C_1+\ndist{(D)}C_2$, where $C_1$ is defined by $(x=0)$ and $C_2$ is defined by $(y^b+xh(x,y)=0)$. Note that $(C_1\cdot \ndist{(D)}C_2)=b\ndist{(D)}\leq 1.$
Then $(\mathbb{C}^2\ni o,\ndist{(D)}D)$ is lc by \cite[Corollary~5.57]{KM98}. So 
$\lct(\mathbb{C}^2\ni o; D)=\ndist{(D)}$.

If $\mf(D)$ is a vertex, then we can choose two coprime positive integers $k_1, k_2$ such that $\mf(D)=(\ndist(D)^{-1}, \ndist(D)^{-1})$ is the unique intersection point of $\NP(D)$ with the line $k_2x_1+k_1x_2=(k_1+k_2)\ndist(D)^{-1}$.
Consider the weighted blow-up $\pi: Y\to \mathbb{C}^2$ at $o$ with weight $(k_2,k_1)$.
By Lemma~\ref{lem wt blowup}, we have 
\begin{equation}
 \pi^*(K_{\mathbb{C}^2}+\ndist{(D)}D)
 =K_Y+\pi_*^{-1}(\ndist{(D)}D)+E.\label{the pull back of weighted blow up 0}
\end{equation}
Thus $\lct(\mathbb{C}^2\ni o;D)\leq \ndist{(D)}$.
We claim that $(Y,\pi_*^{-1}(\ndist{(D)}D)+E)$ is lc near $E$. By Lemma~\ref{lem wt blowup}, $$(K_Y+E)|_E=K_E+\left(1-\frac{1}{k_1}\right)P_1+\left(1-\frac{1}{k_2}\right)P_2$$
where $P_1=[0:1]$ and $P_2=[1:0]$ on $E$.
On the other hand, by the choice of $(k_1, k_2)$, $(xy)^{\ndist(D)^{-1}}$ is the unique lowest weight term of $f$, so $$\pi_*^{-1}(D)|_E={\ndist(D)^{-1}}(\frac{1}{k_1}P_1+\frac{1}{k_2}P_2).$$
Hence $$(K_Y+\pi_*^{-1}(\ndist{(D)}D)+E)|_E=K_E+P_1+P_2.$$
By \cite[Theorem~5.50]{KM98}, $(Y, \pi_*^{-1}(\ndist{(D)}D)+E)$ is lc near $E$. By \eqref{the pull back of weighted blow up 0}, $(\mathbb{C}^2\ni o,\ndist{(D)}D)$ is lc, and hence $\lct(\mathbb{C}^2\ni o;D)=\ndist{(D)}$.

Now we may assume that $\mf(D)$ is compact of dimension $1$. Denote its two vertices by $(p_1, q_1)$ and $(p_2, q_2)$ with $p_1<p_2$. Denote $k_1=\frac{p_2-p_1}{\nmul(D)}$ and $k_2=\frac{q_1-q_2}{\nmul(D)}$, then $(k_1, k_2)\in \mathbb{Z}_{>0}^2$ and $\gcd(k_1, k_2)=1$.
Denote 
 $g(x,y)$ to be the sum of all monomial terms of $f$ corresponding to points in $\mf(D)\cap\mathbb{Z}^2$, then
 any monomial appearing in $g$
is of bi-degree 
$
(p_1+{l}k_1, q_1-lk_2)
$
for some integer $0\leq l\leq \nmul(D)$.
We may write $g(x, y)=x^{p_1}y^{q_2}h(x^{k_1}, y^{k_2})$ for some homogeneous polynomial $h\in \mathbb{C}[x, y]$ of degree $\nmul(D)$ with $h(0, 0)\neq 0$. As $\mf(D)$ contains $(\ndist{(D)}^{-1}, \ndist{(D)}^{-1})$, we have $\max\{p_1, q_2\}\leq \ndist{(D)}^{-1}$
and $$k_2p_1+k_1q_1=k_2p_2+k_1q_2=(k_1+k_2)\ndist{(D)}^{-1}.$$
Consider the weighted blow-up $\pi: Y\to \mathbb{C}^2$ at $o$ with weight $(k_2,k_1)$.
By Lemma~\ref{lem wt blowup}, we have 
\begin{equation}
 \pi^*K_{\mathbb{C}^2}
 =K_Y+(1-k_1-k_2)E,\, \pi^*D
 =\pi_*^{-1}D+(k_1+k_2)\ndist{(D)}^{-1}E.\label{the pull back of weighted blow up}
\end{equation}
Thus $\lct(\mathbb{C}^2\ni o;D)\leq \ndist{(D)}$.
Let $r=\min\{\frac{1}{\nmul(D)},\ndist{(D)}\}$, we claim that $(Y,\pi_*^{-1}(rD)+E)$ is lc near $E$. By Lemma~\ref{lem wt blowup}, $(K_Y+\pi_*^{-1}(rD)+E)|_E=K_E+\Delta_E$ with $$\Delta_E=\left(\frac{k_1-1+rp_1}{k_1}\right)P_1+\left(\frac{k_2-1+rq_2}{k_2}\right)P_2+rG,$$
where $P_1=[0:1]$ and $P_2=[1:0]$ on $E$ and $G$
is defined by $(h(z, w)=0)$ in $E\simeq \mathbb{P}^{1}_{z, w}$.
Note that $h$ is of degree $\nmul(D)$, so the coefficients of $G$ are at most $\nmul(D)\leq r^{-1}$. Also note that $\max\{p_1, q_2\}\leq \ndist{(D)}^{-1}\leq r^{-1}$. 
 thus $(E,\Delta_E)$ is lc. By \cite[Theorem~5.50]{KM98}, $(Y, \pi^{-1}_*(rD)+E)$ is lc near $E$. By \eqref{the pull back of weighted blow up}, $\pi^*(K_{\mathbb{C}^2}+r D)\leq K_Y+\pi_*^{-1}(rD)+E$, thus $(\mathbb{C}^2\ni o,rD)$ is lc, and hence $\lct(\mathbb{C}^2\ni o;D)\geq r=\min\{\frac{1}{\nmul(D)},\ndist{(D)}\}$.
\end{proof} 
 
\begin{lem}\label{The case when m fails the inequality}
For any Cartier divisor ${D}$ on $\widehat{\mathbb{C}^2}$ with coordinates $(x,y)$, 
$ \ndist({D})\nmul({D})\leq 2.$ Moreover,
if $\ndist({D})\nmul({D})>1$, then the main face $\mf({D})$ is compact of dimension 1, and either ${\nmul(D)}=p_2-p_1$ or ${\nmul(D)}=q_1-q_2$, where $(p_1, q_1)$ and $(p_2, q_2)$ are two vertices of $\mf({D})$ with $p_1<p_2$.
\end{lem}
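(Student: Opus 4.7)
The plan is a straightforward case analysis on the shape of $\mf(D)$. First I would dispose of the two easy cases: if $\mf(D)$ is a vertex of $\NP(D)$ or a non-compact $1$-dimensional face, then by the very definition $\nmul(D)=\ndist(D)^{-1}$, so the product $\ndist(D)\nmul(D)$ equals $1$. This already settles the inequality $\le 2$ and verifies the second assertion vacuously (its hypothesis $\ndist(D)\nmul(D)>1$ fails). The whole content of the lemma is therefore in the remaining compact $1$-dimensional case.

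In that case, write the vertices of $\mf(D)$ as $(p_1,q_1)$ and $(p_2,q_2)$ with $p_1<p_2$, and set $a:=p_2-p_1$, $b:=q_1-q_2$, $g:=\gcd(a,b)$; note $q_1>q_2$ because $\mf(D)$ lies on the staircase boundary. The first key step is to compute $\ndist(D)^{-1}$ explicitly: the line through the two vertices has equation $bx_1+ax_2=bp_1+aq_1$, and substituting $x_1=x_2=\ndist(D)^{-1}$ gives $\ndist(D)^{-1}=(bp_1+aq_1)/(a+b)$.

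The second key step is to exploit the lattice structure: since the vertices lie in $\mathbb{Z}^2_{\ge 0}$, we have $p_1\ge 0$ and $q_1=q_2+b\ge b$, which feeds into the clean bound $\ndist(D)^{-1}\ge ab/(a+b)$. Writing $a=ga'$, $b=gb'$ with $\gcd(a',b')=1$, the elementary inequality $a'b'/(a'+b')\ge 1/2$ (equivalent to $(a'-1)(b'-1)\ge 0$) then gives $\ndist(D)^{-1}\ge g/2=\nmul(D)/2$, i.e., $\ndist(D)\nmul(D)\le 2$.

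For the moreover clause, strict inequality $\ndist(D)\nmul(D)>1$ rules out both easy cases (where the product equals $1$), so $\mf(D)$ must be compact of dimension one. Then the chain $\nmul(D)>\ndist(D)^{-1}\ge g\cdot a'b'/(a'+b')$ forces $a'+b'>a'b'$, which fails whenever $a',b'\ge 2$ (since $(a'-1)(b'-1)\ge 1$ then gives $a'b'\ge a'+b'$). Hence $a'=1$ or $b'=1$, i.e., $\nmul(D)=g$ equals $p_2-p_1$ or $q_1-q_2$. The only place requiring care is extracting the correct normalization of $\ndist(D)^{-1}$ from the Newton polytope geometry and correctly reading off that vertices are lattice points in $\mathbb{Z}^2_{\ge 0}$; once that is in hand, the rest is short arithmetic, and I do not anticipate a significant obstacle.
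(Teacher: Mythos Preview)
Your proof is correct and follows essentially the same approach as the paper: both dispose of the vertex/non-compact cases via the definition, then in the compact $1$-dimensional case compute $\ndist(D)^{-1}$ from the line through the two lattice vertices and use nonnegativity of the coordinates to bound it below by $ab/(a+b)$. Your pair $(a',b')=(a/g,b/g)$ is exactly the paper's $(k_1,k_2)$, and your conclusion $a'b'<a'+b'\Rightarrow\min(a',b')=1$ is the same as the paper's $\min(k_1,k_2)<2\Rightarrow\min(k_1,k_2)=1$; the only difference is cosmetic bookkeeping (the paper writes the identity symmetrically as $k_2(p_1+p_2)+k_1(q_1+q_2)=\frac{2}{\ndist(D)\nmul(D)}(p_2-p_1+q_1-q_2)$ rather than isolating $\ndist(D)^{-1}$ directly).
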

\begin{proof}
Suppose that
${D}$ is defined by $(f=0)$ for some $f\in\mathbb{C}[[x,y]]$.
By definition, if $\mf({D})$ is a vertex or not compact, then $ \nmul({D})=\ndist{({D})}^{-1}$. So there is nothing to prove.
Thus we may assume that $\mf({D})$ is compact of dimension $1$ with two vertices $(p_1, q_1)$ and $(p_2, q_2)$ such that $p_1<p_2$. Denote $k_1=\frac{p_2-p_1}{\nmul(D)}$ and $k_2=\frac{q_1-q_2}{\nmul(D)}$, then $(k_1, k_2)\in \mathbb{Z}_{>0}^2$ and $\gcd(k_1, k_2)=1$. Recall that
$$k_2p_1+k_1q_1=k_2p_2+k_1q_2=(k_1+k_2)\ndist{(D)}^{-1}.$$
This implies that 
$$k_2(p_1+p_2)+k_1(q_1+q_2)=\frac{2}{\ndist{(D)}\nmul{(D)}}(p_2-p_1+q_1-q_2).$$
Hence $\ndist{(D)}\nmul{(D)}\leq \frac{2}{\min\{k_1, k_2\}}$. So $\ndist{(D)}\nmul{(D)}\leq {2}$ and if $\ndist{(D)}\nmul{(D)}> 1$ then either $k_1=1$ or $k_2=1$.
\end{proof}

\begin{lem}[{=Corollary~\ref{cor lct B C 1}(a)(c)}]\label{cor lct B C 1 appedix B}
Let ${B}$ be a Cartier divisor in a neighborhood of $o\in \widehat{\mathbb{C}^2}$. 
Suppose that $B$ is irreducible, $\mult_o B=m$. Let $C\neq B$ be a smooth curve passing $o$, and $(B\cdot C)_o=I.$
Let $\lambda$ be a positive real number. Suppose that one of the following condition holds: (a) $\lambda m \leq 1$; or (c) $I\neq m$ and $\lambda I\leq 2$.
Then $(\widehat{\mathbb{C}^2}\ni o, \lambda B)$ is lc and
$$\lct(\widehat{\mathbb{C}^2}\ni o, \lambda B; C)\geq \min\left\{1, 1+\frac{m}{I}-\lambda m\right\}.$$
\end{lem}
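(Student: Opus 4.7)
The plan is to resolve the pair $(\widehat{\mathbb{C}^2}\ni o, \lambda B + sC)$ with $s := \min\{1, 1 + m/I - \lambda m\}$ via a single weighted blow-up of weight $(I, m)$, and then verify the lc property by inversion of adjunction on the exceptional $\mathbb{P}^1$. First I would dispose of the case $\lambda I \leq 1$, which is immediate from \cite[Corollary~5.57]{KM98}, so from now on assume $\lambda I > 1$; under either hypothesis (a) or (c) this forces $I > m$ and $s = 1 + m/I - \lambda m \in [0, 1]$.

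I would then fix coordinates $(x, y)$ at $o$ with $C = (x = 0)$ and let $f$ define $B$. Since $B$ is irreducible and $(B \cdot C)_o = I > m$, the tangent cone of $f$ is $c x^m$. Applying Weierstrass preparation in $y$ writes $f = u(x, y) P(x, y)$ with $P = y^I + \sum_{j < I} a_j(x) y^j$, $a_j(0) = 0$; transversality of $B$ with $(y = 0)$ yields $v(a_0) = (B \cdot (y = 0))_o = m$. Irreducibility of $P$ over $\mathbb{C}((x))$ forces its $y$-Newton polygon to be a single segment from $(0, m)$ to $(I, 0)$ with coprime endpoint gcd, giving $\gcd(m, I) = 1$ and $v(a_j) \geq m(I - j)/I$ for all $j$. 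Consequently, the weighted multiplicity of $f$ with respect to $(w(x), w(y)) = (I, m)$ equals $mI$, attained only at the lattice points $(m, 0)$ and $(0, I)$, and the weighted leading form is $f_w = \alpha x^m + \beta y^I$ with $\alpha, \beta \neq 0$.

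Next, let $\pi \colon Y \to \widehat{\mathbb{C}^2}$ denote the weighted blow-up at $o$ with coprime weights $(I, m)$. By Lemma~\ref{lem wt blowup}, $K_Y = \pi^* K + (I + m - 1) E$, $\pi^* C = C' + I \cdot E$, and $\pi^* B = B' + mI \cdot E$; substituting $s = 1 + m/I - \lambda m$ shows
\[
\pi^*(K + \lambda B + sC) = K_Y + \lambda B' + sC' + E,
\]
so the coefficient of $E$ is exactly $1$. Restricting to $E \cong \mathbb{P}^1$ with its two cyclic quotient singular points $P_1, P_2$, Lemma~\ref{lem wt blowup}(3)(4) yields
\[
(K_Y + E + \lambda B' + sC')|_E = K_E + \bigl(1 - \tfrac{1 - s}{m}\bigr) P_1 + \bigl(1 - \tfrac{1}{I}\bigr) P_2 + \lambda Q,
\]
where $Q \in E \setminus \{P_1, P_2\}$ is the single reduced zero of $f_w$ on $E$. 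Each coefficient is at most $1$: $s \leq 1$ by construction, $\lambda \leq 1/m \leq 1$ under (a), and $\lambda \leq 2/I \leq 1$ under (c) since $I \geq 2$. Thus $(E, \Delta_E)$ is lc, and by inversion of adjunction $(Y, \lambda B' + sC' + E)$ is lc near $E$; pushing down proves $\lct(\widehat{\mathbb{C}^2}\ni o, \lambda B; C) \geq s$.

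The main obstacle will be the Newton polygon analysis of the second paragraph: one must combine the irreducibility of the Weierstrass polynomial $P$ (a single-segment $y$-Newton polygon with coprime endpoint gcd) with the transversality computation $v(a_0) = m$ to simultaneously conclude $\gcd(m, I) = 1$ and the precise two-term form of $f_w$. These are exactly what is needed for the single weighted blow-up of weight $(I, m)$ to achieve the optimal coefficient $1$ on $E$ while keeping $B'|_E$ reduced and away from the quotient singularities of $Y$, so that inversion of adjunction closes the argument.
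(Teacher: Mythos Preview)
Your overall strategy---a single weighted blow-up followed by inversion of adjunction on $E\cong\mathbb{P}^1$---is sound and is in fact the geometric content behind the paper's Newton-polytope argument (Lemma~\ref{the criterion lemma for attaining lct} is proved by exactly such a blow-up). However, the Newton polygon step contains a genuine error: irreducibility of the Weierstrass polynomial $P$ over $\mathbb{C}((x))$ forces a single slope, but it does \emph{not} force $\gcd(m,I)=1$. A concrete counterexample is $B=\bigl((x-y^2)^2-y^5=0\bigr)$ with $C=(x=0)$: here $B$ is irreducible, $m=2$, $I=4$, and with respect to the weight $(w(x),w(y))=(2,1)$ the leading form is $f_w=(x-y^2)^2$, so $B'|_E=2Q$ is not reduced. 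In general $I$ can be any of $m,2m,\dots,\lfloor n/m\rfloor m,n$ where $(m,n)$ is the first Puiseux pair (Remark~\ref{remark I=mp}(3)), so $\gcd(m,I)\in\{1,m\}$. Consequently your check ``coefficient at $Q$ is $\lambda\le 1$'' is insufficient when $d:=\gcd(m,I)>1$.

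The repair is short. Blow up with the coprime weight $(a_1,a_2)=(I/d,m/d)$. The single-segment Newton polygon still gives weighted multiplicity $mI/d$ for $f$, and the discrepancy computation again yields coefficient exactly $1$ on $E$ when $s=1+m/I-\lambda m$. By Lemma~\ref{lem wt blowup}(4), since $x^m$ and $y^I$ both occur in $f_w$ with nonzero coefficient, we have $f_w=h(x^{m/d},y^{I/d})$ with $h$ homogeneous of degree $d$ and $h(1,0),h(0,1)\neq 0$; thus $B'|_E$ is supported away from $P_1,P_2$ with multiplicities at most $d$. It then suffices to check $\lambda d\le 1$: under (a) this follows from $d\le m$ and $\lambda m\le 1$; under (c), since $d\mid I$ and $d\le m<I$ we have $d\le I/2$, hence $\lambda d\le \lambda I/2\le 1$. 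With this correction your argument goes through, and it is a slightly more direct variant of the paper's proof, which packages the same blow-up inside the Newton-polytope invariants $\ndist(D)$ and $\nmul(D)$ rather than computing $f_w$ explicitly.
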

\begin{proof}
Note that $I\geq m$ (cf. \cite[\S 1, Excerise 5.4]{GTM52}). So under either condition, $\lambda\leq \min \{1, \frac{1}{m}+\frac{1}{I}\}.$
Denote $t:=\min\{1, 1+\frac{m}{I}-\lambda m\}\geq 0.$ 
It is equivalent to show that $(\widehat{\mathbb{C}^2}\ni o, \lambda B+t C)$ is lc. As being lc is a closed condition on coefficients, we may assume that $t$ is a rational number. 

If $m=I$, then $\lambda m\leq 1$ and \cite[Corollary~5.57]{KM98} implies that $(\widehat{\mathbb{C}^2}\ni o, \lambda B+C)$ is lc, so there is nothing to prove. So we may assume that $I>m$ as $I\ge m$.

Choose local coordinates $(x, y)$ such that $C$ is defined by $(x=0)$.
Suppose that $B$ is defined by $(f=0)$ for some $f\in \mathbb{C}[[x,y]].$ As $B$ is irreducible, by \cite[\S 1, Exercise~5.14]{GTM52}, we may write $f(x,y)=(\alpha x+\beta y)^{m}+g$ for some $(\alpha, \beta)\neq (0,0)$ and $\mult_o g>m$. Note that $I$ is the minimal $k$ such that $y^k$ has non-zero coefficient in $f(x, y)$.
So $I>m$ implies that $\beta=0$. After rescaling, we may assume that 
$f(x, y)=x^m+y^I+h(x, y),$ where $\mult_o h>m$. 

Take a sufficiently divisible positive integer $k$ such that $kt$ and $k\lambda$ are integers, and denote $D=k(\lambda B+tC)$ and $f_D=f(x, y)^{\lambda k}x^{tk}$.
Then our goal is equivalent to show that 
$\lct(\widehat{\mathbb{C}^2}\ni o,D)\geq \frac{1}{k}.$ By Lemma~\ref{the criterion lemma for attaining lct}, it suffices to show that $\ndist{(D)}\geq \frac{1}{k}$ and $\nmul{(D)}\leq {k}$.

First we show that $\ndist{(D)}\geq \frac{1}{k}$. By definition, $\NP(D)$ contains $(tk, \lambda Ik)$ and $(tk+\lambda m k,0)$ as vertices.
By the convexity, $(s, s)\in \NP(D)$ for $s=\frac{(t+\lambda m)Ik}{m+I}$. By the definition of $t$, $s\leq k$. So $\ndist{(D)}\geq \frac{1}{s}\geq \frac{1}{k}$. 

Finally we show that $\nmul{(D)}\leq {k}$. We may assume that $\ndist{({D})}\nmul({D}) >1$.
Then by Lemma~\ref{The case when m fails the inequality},
the main face $\mf({D})$ is compact of dimension 1, and if denote its two vertices by $(p_1, q_1)$ and $(p_2, q_2)$ with $p_1<p_2$, then either ${\nmul(D)}=p_2-p_1$ or ${\nmul(D)}=q_1-q_2$.
Note that we have $tk \leq p_1<p_2\leq tk+\lambda m k$ and $\lambda I k\geq q_1>q_2\geq 0$. Since $\mult_o D=\lambda mk+tk$ and $x^{\lambda mk+tk}$ is the leading term of $f_D$, $\NP(D)$ lies above the line $x_1+x_2=\lambda mk+tk$ with slope $-1$ and intersects this line only at $(tk+\lambda m k,0)$.
So
by the convexity of $\NP(D)$, this implies that the slope of the main face is $\frac{q_1-q_2}{p_1-p_2}<-1.$
So we have ${\nmul(D)}=p_2-p_1$ and $2{\nmul(D)}\leq q_1-q_2$.
This implies that ${\nmul(D)}\leq \min\{p_2-p_1, \frac{1}{2}(q_1-q_2)\}\leq \min\{\lambda m k,\frac{1}{2}\lambda I k \}\leq k.$
\end{proof}

\end{document}